\newtheorem{theorem}{Theorem}[section]
\newtheorem{lemma}[theorem]{Lemma}
\newtheorem{proposition}[theorem]{Proposition}
\newtheorem{corollary}[theorem]{Corollary}
\newtheorem{question}[theorem]{Question}
\newtheorem*{theorem:universal}{Theorem~\ref{theorem:universal}}
\newtheorem*{theorem:nilpotent}{Theorem~\ref{theorem:nilpotent}}
\newtheorem*{corollary:nilpotent}{Corollary~\ref{corollary:nilpotent}}
\newtheorem*{corollary:nilpotent_lift}{Corollary~\ref{corollary:nilpotent_lift}}
\newtheorem*{corollary:abelian}{Corollary~\ref{corollary:abelian}}
\newtheorem*{theorem:polynomial}{Theorem~\ref{theorem:polynomial}}
\newtheorem*{theorem:coherence}{Theorem~\ref{theorem:coherence}}
\newtheorem*{theorem:dc3}{Theorem~\ref{theorem:dc3}}
\newtheorem*{theorem:quasi-isometry}{Theorem~\ref{theorem:quasi-isometry}}
\theoremstyle{definition}
\newtheorem{definition}[theorem]{Definition}
\newtheorem{example}[theorem]{Example}
\theoremstyle{remark}
\newtheorem{remark}[theorem]{Remark}
\numberwithin{equation}{section}
\newcommand{\F}{{\cal F}}
\newcommand{\p}{{\parallel}}
\newcommand{\abs}[1]{|  #1  |}
\begin{document}
\title{On $3$-manifolds that support partially hyperbolic diffeomorphisms \footnotetext{$2000$ \textit{Mathematics Subject Classification.} Primary 34C40, 37D10, 37D30.} \footnotetext{\textit{Key words and phrases.} Partially hyperbolic diffeomorphisms, Foliations, $3$-manifolds.}}
\author{Kamlesh Parwani}
\date{\today}

\maketitle

\begin{abstract}
Let $M$ be a closed $3$-manifold that supports a partially hyperbolic diffeomorphism $f$. If $\pi_1(M)$ is nilpotent, the induced action of $f_*$ on $H_1(M, \mathbb{R})$ is partially hyperbolic. If $\pi_1(M)$ is almost nilpotent or if $\pi_1(M)$ has subexponential growth, $M$ is finitely covered by a circle bundle over the torus. If $\pi_1(M)$ is almost solvable, $M$ is finitely covered by a torus bundle over the circle. Furthermore, there exist infinitely many hyperbolic $3$-manifolds that do not support dynamically coherent partially hyperbolic diffeomorphisms; this list includes the Weeks manifold.

If $f$ is a strong partially hyperbolic diffeomorphism on a closed $3$-manifold $M$ and if $\pi_1(M)$ is nilpotent, then the lifts of the stable and unstable foliations are quasi-isometric in the universal of $M.$ It then follows that $f$ is dynamically coherent.

We also provide a sufficient condition for dynamical coherence in any dimension. If $f$ is center bunched and if  the center-stable and center-unstable distributions are Lipschitz, then the partially hyperbolic diffeomorphism $f$ must be dynamically coherent.
\end{abstract}

\section{Introduction}
There are many definitions of partial hyperbolicity; we will use the one given below.
\begin{definition} \label{definition:phype}
A $C^1$ diffeomorphism $f$ on a compact manifold $M$ is \textit{partially hyperbolic} if the following conditions hold. There is a nontrivial splitting of the tangent bundle, $TM = E^s \oplus E^c \oplus E^u$, that is invariant under the derivative map $Df.$ There exists a Riemannian metric for which we can choose continuous positive functions $\nu$, $\hat{\nu}$, $\gamma$, and $\hat{\gamma}$ such that 
\begin{equation*}
\nu, \hat{\nu} < 1 \textrm{ and } \nu < \gamma < \hat{\gamma}^{-1} < \hat{\nu}^{-1}.
\end{equation*}
Furthermore, for every unit vector $v \in T_p M$,
\begin{align*}
\p Df(v) \p  < \nu(p), &  \; \; \textrm{if } v \in E^{s}(p), \\
  \\
\gamma(p)  <  \p Df(v) \p < \hat{\gamma}^{-1}(p), &  \; \; \textrm{if } v \in E^{c}(p), \\
  \\
\hat{\nu}(p)^{-1} <  \p Df(v) \p , &  \; \; \textrm{if } v \in E^{u}(p).
\end{align*}
\end{definition}

Some authors assume that the functions $\nu$, $\hat{\nu}$, $\gamma$, and $\hat{\gamma}$ can be chosen to be constants---this is sometimes referred to as \textit{strong partial hyperbolicity} or uniform partial hyperbolicity.

In general, $E^s$, $E^u$, and $E^c$ are not smooth distributions; they are always H\"{o}lder.  The distributions $E^s$ and $E^u$ are uniquely integrable, but $E^c$ need not be integrable. However, if $E^{cs}$ and $E^{cu}$ are both integrable, then $E^c$ is also integrable.

\begin{definition}
We say that $f$ is \textit{dynamically coherent} if there exist $f$-invariant, $C^0$ foliations  $\F^{cs}$ and $\F^{cu}$ tangent to $E^{cs}=E^s \oplus  E^c$ and $E^{cu}= E^u \oplus E^c$ respectively.
\end{definition}

\begin{question}
Is every partially hyperbolic diffeomorphism on $3$-manifold dynamically coherent?
\end{question}

There has been some progress in showing that dynamical coherence is natural to partially hyperbolic maps on $3$-manifolds.  For example, Bonatti and Wilkinson have shown that if a transitive strong partially hyperbolic diffeomorphism possesses an invariant circle tangent to the center distribution, then the map must be dynamically coherent (see \cite{BoWi}). Recently, Brin, Burago, and Ivanov in \cite{BBI2} established dynamical coherence for strong partially hyperbolic diffeomorphisms on $T^3.$ We prove the same result for strong partially hyperbolic diffeomorphisms on $3$-manifolds with nilpotent fundamental groups. It should be noted that there are no known examples of strong partially hyperbolic diffeomorphisms on $3$-manifolds that are not dynamically coherent.  However, there are examples of partially hyperbolic diffeomorphisms on six-dimensional manifolds that are not dynamically coherent.

The theorems in this article are motivated by the following question.

\begin{question}
Which $3$-manifolds support partially hyperbolic diffeomorphisms?
\end{question}

\subsection{Examples}

We now present a few examples of partially hyperbolic diffeomorphisms on $3$-manifolds.  The first few are described in greater detail in \cite{KH}. Example~\ref{example:skew} will be referred to again in Section~\ref{section:dc3}.

\begin{example}[Anosov Diffeomorphisms] \label{example:Anosov}
The most ``famous'' examples of partially hyperbolic maps are Anosov diffeomorphisms on the three torus $T^3.$ Other well-known examples of partially hyperbolic maps that are not Anosov are certain Derived from Anosov diffeomorphisms.
\end{example}

\begin{example}[Suspensions] \label{example:suspension}
Choose an Anosov diffeomorphism on the torus $T^2$ and then obtain an Anosov flow on a Sol manifold via the standard suspension construction.  The time-one map of the Anosov flow is a dynamically coherent partially hyperbolic diffeomorphism. Note the fundamental group of such a $3$-manifold is solvable and the manifold is a torus bundle over the circle.
\end{example}

\begin{example}[Anosov flows] \label{example:flow}
The time-one map of any Anosov flow on a $3$-manifold is a dynamically coherent partially hyperbolic diffeomorphism.  Some common examples of Anosov flows are suspensions of Anosov diffeomorphisms and geodesic flows on unit tangent bundles of surfaces with constant negative curvature.  More exotic examples can be obtained via surgery techniques (see \cite{Goodman} and \cite{Handel&Thurston}).
\end{example}

So the problem of classifying $3$-manifolds that support partially hyperbolic diffeomorphisms is at least as intractable as the problem of classifying $3$-manifolds that support Anosov flows.

\begin{example}[Skew products] \label{example:skew}
There is a simple construction of a dynamically coherent partially hyperbolic diffeomorphism on the 3-dimensional torus $T^3.$  If $A$ is a linear Anosov diffeomorphism on $T^2$, define a partially hyperbolic diffeomorphism on $T^3$ by $f(v,z) = (Av, z + \theta)$, where $v \in T^2$, $z \in S^1$, and $\theta \in S^1$ is a fixed angle.

This construction on $T^3$ can be generalized to any orientable circle bundle over the torus with Euler class $k$, denoted by $M_k$, in the following manner (see \cite{BoWi} where this construction is referred to as a topological skew product over an Anosov diffeomorphism). Any orientation preserving diffeomorphism $f : T^2 \to T^2$ can be expressed as a composition of 2 diffeomorphisms: $f = g_2 \circ g_1$, where $g_i$ is the identity on a neighborhood $U_i$ of a compact disk $D_i \subset T^2.$ For each $i$, the circle bundle $\pi_k : \pi_k^{-1} (T^2 - int (D_i )) \to (T^2 - int (D_i ))$ is trivializable by a fibered chart $\psi_i$ that induces rotations in the fibers.  This is because the isomorphism classes of orientable circle bundles over a space are in one-to-one correspondence with the second integral cohomology group of the space, and $H^2(T^2 - int (D_i ), \mathbb{Z})$ is trivial. Now define a diffeomorphism $G_i$ of $\pi_k^{-1} (T^2 - int (D_i ))$ as follows: $G_i$ coincides with the identity map on $\pi_k^{-1}(U_i )$ and, in the trivialization given by the chart $\psi_i$, coincides with $(g_i , id_{S^ 1} ).$ 
The map $F = G_2 \circ G_1$ is a partially hyperbolic diffeomorphism on $M_k$ when $g_2 \circ g_1$ is an Anosov diffeomorphism.

The manifolds obtained from this construction have nilpotent fundamental groups---they are Nil manifolds.  These circle bundles over the torus are also torus bundles over the circle.
\end{example}

\subsection{Statement of results}

\begin{definition}
A group is \textit{almost solvable} if it contains a finite-index normal subgroup that is solvable. A group is \textit{almost nilpotent} if it contains a finite-index normal subgroup that is nilpotent. \textit{Almost abelian} groups are defined similarly.
\end{definition}

We can classify all closed $3$-manifolds that support partially hyperbolic diffeomorphisms and have almost solvable and almost nilpotent fundamental groups.

\begin{theorem} \label{theorem:solvable/nilpotent}
Let $M$ be a closed $3$-manifold that supports a partially hyperbolic diffeomorphism.
\begin{enumerate}[\normalfont 1)]
\item If $\pi_1(M)$ is solvable, $M$ is finitely covered a by torus bundle over the circle.
\item If $\pi_1(M)$ is nilpotent, $M$ is a circle bundle over the torus.
\end{enumerate}
\end{theorem}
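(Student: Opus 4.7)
The plan is to combine the previously established partial hyperbolicity of the induced action on $H_1(M, \mathbb{R})$ with the classical topological classification of closed $3$-manifolds whose fundamental group is solvable or nilpotent.

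For part (2), assume $\pi_1(M)$ is nilpotent. By the result already stated in the abstract, the induced action of $f_*$ on $H_1(M, \mathbb{R})$ is partially hyperbolic, so it possesses nontrivial expanding and contracting invariant subspaces. In particular $b_1(M) \geq 2$. I would then invoke the following standard classification: the only closed orientable $3$-manifolds with nilpotent fundamental group and first Betti number at least two are $T^3$ (fundamental group $\mathbb{Z}^3$) and the Nil manifolds (Heisenberg quotients, two-step nilpotent with abelianization $\mathbb{Z}^2$). This already excludes the spherical space forms ($\pi_1$ finite, $b_1 = 0$) and $S^2 \times S^1$ ($\pi_1 = \mathbb{Z}$, $b_1 = 1$), which are the other closed $3$-manifolds with nilpotent fundamental group. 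Both $T^3$ and every Nil manifold are orientable circle bundles over $T^2$, with Euler classes zero and nonzero respectively, proving the conclusion.

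For part (1), assume $\pi_1(M)$ is solvable. I would first rule out the spherical and reducible cases. A partially hyperbolic diffeomorphism on a closed $3$-manifold yields uniquely integrable one-dimensional foliations $W^s$ and $W^u$ tangent to $E^s$ and $E^u$, and, following Burago--Ivanov, Reebless codimension-one branching foliations tangent to $E^{cs}$ and $E^{cu}$. A Novikov--Palmeira-type theorem in the branching setting then forces $\widetilde{M} \cong \mathbb{R}^3$, so $M$ is aspherical with $\pi_1(M)$ infinite and torsion-free. By Thurston's geometrization theorem (as completed by Perelman), combined with the classification of $3$-manifold groups, a closed aspherical $3$-manifold with infinite solvable fundamental group must admit the flat, Nil, or Sol geometry. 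Flat $3$-manifolds are finitely covered by $T^3$, itself a trivial $T^2$-bundle over $S^1$; Nil manifolds are $T^2$-bundles over $S^1$ with unipotent monodromy; and Sol manifolds are $T^2$-bundles over $S^1$ with Anosov monodromy. So in all cases $M$ is finitely covered by a torus bundle over $S^1$.

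The main obstacle lies in part (1): producing the Reebless branching foliations tangent to $E^{cs}$ and $E^{cu}$ without assuming dynamical coherence, and verifying that the Novikov--Palmeira machinery applies in the branching setting so as to force $\widetilde{M} \cong \mathbb{R}^3$. Part (2), by contrast, is essentially algebraic once the previously established partial hyperbolicity of $f_*$ on $H_1(M, \mathbb{R})$ is in hand.
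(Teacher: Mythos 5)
Your part (2) has a genuine circularity problem. You invoke the partial hyperbolicity of $f_*$ on $H_1(M,\mathbb{R})$ (Theorem~\ref{theorem:nilpotent}) to force $b_1(M)\geq 2$, but the paper proves Theorem~\ref{theorem:nilpotent} \emph{using} part (2) of the present theorem: the first step of that proof is to conclude from Theorem~\ref{theorem:solvable/nilpotent} that $M$ is a circle bundle over $T^2$, and only then apply Waldhausen's theorem to isotope $f$ to a fiber-preserving homeomorphism. So the result you quote from the abstract is downstream, not upstream, of what you are trying to prove. Fortunately the extra input is unnecessary: once the universal cover is $\mathbb{R}^3$ you already know $\pi_2(M)=0$ and $\pi_1(M)$ is infinite, and that is all the Evans--Moser classification needs. (It also rules out $S^2\times S^1$, which you were using $b_1\geq 2$ to exclude, since $\widetilde{S^2\times S^1}\cong S^2\times\mathbb{R}\not\cong\mathbb{R}^3$.)

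For part (1) your route is correct, and the key step you flag --- producing Reebless $C^0$ foliations almost tangent to $E^{cs}$ and $E^{cu}$ and running Novikov--Palmeira to get $\widetilde{M}\cong\mathbb{R}^3$ --- is exactly what the paper does in Section~\ref{section:universal} (it uses the Burago--Ivanov $\epsilon$-foliations $\F^{cs}_\epsilon$, $\F^{cu}_\epsilon$, which are honest foliations, not branching ones, so the classical Novikov and Palmeira theorems apply directly). The difference is what happens afterward: you invoke Geometrization to classify aspherical $3$-manifolds with infinite solvable fundamental group, whereas the paper simply cites the Evans--Moser theorem (1972), which gives both parts from the hypotheses $\pi_2(M)=0$, $\pi_1(M)$ infinite, and $\pi_1(M)$ solvable/nilpotent without any appeal to Geometrization. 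Using Evans--Moser makes the proof both shorter and logically lighter (the paper deliberately reserves Geometrization for Theorem~\ref{theorem:polynomial}, where it seems genuinely needed), and it also sidesteps the orientability assumption you implicitly made when listing closed orientable manifolds.
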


\begin{corollary} \label{corollary:solvable/nilpotent}
Let $M$ be a closed $3$-manifold that supports a partially hyperbolic diffeomorphism.
\begin{enumerate}[\normalfont 1)]
\item If $\pi_1(M)$ is almost solvable, $M$ is finitely covered by a torus bundle over the circle.
\item If $\pi_1(M)$ is almost nilpotent, $M$ is finitely covered by a circle bundle over the torus.
\end{enumerate}
\end{corollary}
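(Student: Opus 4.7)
The plan is to reduce both parts of the corollary directly to Theorem~\ref{theorem:solvable/nilpotent} by passing to a suitable finite cover. Assume $\pi_1(M)$ is almost solvable (respectively, almost nilpotent), and fix a finite-index normal subgroup $H \triangleleft \pi_1(M)$ which is solvable (resp.\ nilpotent). Let $p:\widetilde{M}\to M$ be the finite covering associated to $H$, so $\pi_1(\widetilde{M})\cong H$. The closed $3$-manifold $\widetilde{M}$ is the candidate to which I want to apply Theorem~\ref{theorem:solvable/nilpotent}; for that I need it to carry a partially hyperbolic diffeomorphism.

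To manufacture such a diffeomorphism, I would lift a suitable iterate of $f$. The map $f$ determines an outer automorphism $f_*$ of $\pi_1(M)$, which permutes the set of normal subgroups of $\pi_1(M)$ of index $[\pi_1(M):H]$. Since $\pi_1(M)$ is finitely generated, this set is finite, so there exists an integer $k\ge 1$ with $f_*^k(H)=H$. Classical covering space theory then yields a lift $\widetilde{f}:\widetilde{M}\to\widetilde{M}$ of $f^k$. Because $p$ is a local diffeomorphism, the invariant splitting $TM=E^s\oplus E^c\oplus E^u$ and the cone estimates of Definition~\ref{definition:phype} pull back through $p$ to an $\widetilde{f}$-invariant splitting of $T\widetilde{M}$ satisfying the same inequalities (with $\nu\circ p$, $\gamma\circ p$, etc.); hence $\widetilde{f}$ is partially hyperbolic. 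Note also that if $f$ is a strong partially hyperbolic diffeomorphism then so is $\widetilde f$, which is not needed here but is consistent with the later results of the paper.

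Now I apply Theorem~\ref{theorem:solvable/nilpotent} to $\widetilde{M}$. In the nilpotent case, $\pi_1(\widetilde{M})=H$ is nilpotent, so the theorem gives that $\widetilde{M}$ is itself a circle bundle over the torus, and $p:\widetilde{M}\to M$ is the desired finite cover. In the solvable case, $\pi_1(\widetilde M)=H$ is solvable, so the theorem gives a further finite cover $q:N\to\widetilde{M}$ with $N$ a torus bundle over the circle; composing with $p$ yields the required finite cover $p\circ q:N\to M$. Both parts are complete.

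The only genuinely nontrivial point is the lifting step, and it is merely a pigeonhole argument on the finitely many normal subgroups of a given finite index in the finitely generated group $\pi_1(M)$; an alternative way to sidestep it would be to replace $H$ at the outset by the (still finite-index, still solvable/nilpotent) characteristic subgroup obtained by intersecting $H$ with its images under all automorphisms of $\pi_1(M)$, or by taking the intersection of all subgroups of $\pi_1(M)$ of index $[\pi_1(M):H]$. Either route makes the lifting automatic, and the rest of the argument is a bookkeeping of finite covers.
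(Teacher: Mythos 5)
Your argument is correct, but it takes a noticeably different route from the paper. The paper never passes to a finite cover itself: it simply notes that supporting a partially hyperbolic diffeomorphism forces the universal cover to be $\mathbb{R}^3$, hence $\pi_2(M)=0$ and $\pi_1(M)$ is infinite, and then quotes the Evans--Moser corollary, which is stated precisely under the hypotheses ``$\pi_2(M)=0$, $\pi_1(M)$ infinite and almost solvable (resp.\ almost nilpotent)'' and already gives the finite cover by a torus bundle (resp.\ circle bundle). The reduction from ``almost X'' to ``X'' is thus absorbed into a purely topological citation, with no dynamics lifted to any cover. You instead build the finite cover $\widetilde M$ associated to the finite-index solvable (resp.\ nilpotent) normal subgroup $H$, arrange $f_*^k(H)=H$ via a pigeonhole count of normal subgroups of a fixed index, lift $f^k$ to a partially hyperbolic $\widetilde f$ on $\widetilde M$, and then invoke Theorem~\ref{theorem:solvable/nilpotent}. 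This works -- the pigeonhole step is sound because a finitely generated group has only finitely many subgroups of any given finite index, normality makes the outer-automorphism action on that set well-defined, and the partial hyperbolicity estimates pull back through a finite covering. It is, however, heavier than needed: the only thing the dynamics are used for in either proof is to secure $\pi_2=0$ and $|\pi_1|=\infty$, and both of those pass to finite covers for free, so there was no need to lift $f$ at all. Your alternative (replacing $H$ by the intersection of all subgroups of that index, which is characteristic) does sidestep the lifting of $f$, and at that point your argument essentially collapses to Evans--Moser's own reduction to the normal/characteristic case, matching the spirit of what the paper cites.
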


More can be said about the partially hyperbolic map if $\pi_1(M)$ is nilpotent.

\begin{theorem} \label{theorem:nilpotent}
Let $M$ be a closed $3$-manifold that supports a partially hyperbolic diffeomorphism $f$. If $\pi_1(M)$ is nilpotent, the induced action $f_*$ on $H_1(M,\mathbb{R})$ is partially hyperbolic, that is, $f_*$ has eigenvalues $\lambda^u$  and $\lambda^s$ such that $\abs{\lambda^u} > 1$ and  $\abs{\lambda^s} < 1.$
\end{theorem}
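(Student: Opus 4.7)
The plan is to combine Theorem~\ref{theorem:solvable/nilpotent} with an Albanese-type semiconjugacy and the exponential expansion of the unstable foliation. By Theorem~\ref{theorem:solvable/nilpotent}, $M$ is a circle bundle over $T^2$, so either $M=T^3$ and $V := H_1(M,\mathbb{R})=\mathbb{R}^3$, or $M$ is a Heisenberg nilmanifold, in which case the circle fiber is torsion in $H_1(M,\mathbb{Z})$ and $V=\mathbb{R}^2$. Writing $A=f_*|_V$, the matrix $A$ has integer entries and satisfies $|\det A|=1$, so it suffices to produce an eigenvalue of modulus strictly greater than $1$; the same argument applied to $f^{-1}$, whose unstable foliation is $\F^s$, then produces an eigenvalue of modulus strictly less than $1$.

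My second step is to build an Albanese map $\phi\colon \tilde M \to V$ by picking a basis of closed $1$-forms $\omega_1,\dots,\omega_r$ representing $H^1(M,\mathbb{R})$ and setting $\phi(q)=\bigl(\int_{p_0}^q \omega_1,\dots,\int_{p_0}^q \omega_r\bigr)$ on the universal cover. Since $f^*\omega_i-\sum_j a_{ji}\omega_j$ is exact, writing it as $du_i$ with $u_i$ a function on the compact manifold $M$ gives, for any chosen lift $\tilde f$ of $f$,
\begin{equation*}
\phi(\tilde f(x)) = A\phi(x) + c + u(\pi x),
\end{equation*}
where $\pi\colon\tilde M\to M$ is the covering projection, $u=(u_i)$ is uniformly bounded, and $c\in V$ is a constant. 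Iterating yields $\phi(\tilde f^n(x))=A^n\phi(x)+E_n(x)$ with $\|E_n\|_\infty \leq C \sum_{j<n}\|A^j\|$; so if $A$ had all eigenvalues of modulus at most $1$, then $\|A^j\|$ would grow only polynomially in $j$, forcing every $\phi$-displacement $|\phi(\tilde f^n(x))-\phi(\tilde f^n(y))|$ to be polynomially bounded in $n$.

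The final and hardest step is to exhibit a pair of points $x,y\in\tilde M$ whose $\phi$-displacement under $\tilde f^n$ grows faster than any polynomial in $n$. Fix a lifted unstable arc $\gamma_0\subset\tilde M$ of unit length; by partial hyperbolicity, $\tilde f^n(\gamma_0)$ is an unstable arc of length at least $c\lambda^n$ for some $\lambda>1$. Since $\pi_1(M)$ is nilpotent, $\tilde M$ has polynomial volume growth of degree at most $4$, and unstable leaves have bounded geometry, so $\tilde f^n(\gamma_0)$ cannot fit in any ball of radius $o(\lambda^{n/4})$ and hence contains two points separated by at least $c'\lambda^{n/4}$ in $\tilde M$. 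The main obstacle is to convert this ambient separation in $\tilde M$ into a comparable separation under $\phi$: the Albanese map is only Lipschitz, and in the Heisenberg case it kills the entire central direction, so I must rule out the possibility that long iterated unstable arcs coil tightly along fibers and project to small subsets of $V$. I plan to exclude this either directly, by using that a noncompact one-dimensional leaf cannot stay in any fixed compact neighborhood of a single circle fiber, or by pushing the entire argument down through the Seifert fibration $M\to T^2$ and analyzing the induced dynamics on the base torus, whose $H_1$ coincides with $V$ and on which a planar argument is unobstructed.
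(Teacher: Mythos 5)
Your high-level strategy (polynomial volume growth in the nilpotent universal cover versus exponential growth of unstable arcs) is the same mechanism the paper uses, but you route the argument through an Albanese semiconjugacy $\phi\colon\widetilde{M}\to V$ rather than, as the paper does, isotoping $f$ to a fiber-preserving homeomorphism via Waldhausen's theorem. Unfortunately, your version has a genuine gap at the step you call ``the final and hardest step,'' and it is precisely the step that the paper's proof is built around.

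The problem is your claim that $\widetilde{f}^n(\gamma_0)$ ``cannot fit in any ball of radius $o(\lambda^{n/4})$'' because it has length at least $c\lambda^n$, volume growth is degree $4$, and unstable leaves have ``bounded geometry.'' That inference is false as stated: a curve has zero volume, so there is no a priori obstruction to an arc of exponential length coiling inside a ball of subpolynomial radius. Bounded geometry of the leaf controls curvature along the arc, not how close distinct points of the arc get to one another in $\widetilde{M}$. To convert length growth into diameter (or volume) growth, one needs a uniform lower bound on the ambient self-distance of $\widetilde{f}^n(\gamma_0)$. This is exactly what the paper supplies with a foliation-theoretic input you never invoke: the Burago--Ivanov foliation $\F^{cs}_{\epsilon}$ is Reebless, the lift $\widetilde{\F}^u$ is transverse to $\widetilde{\F}^{cs}_{\epsilon}$, and if two points on $\widetilde{f}^n(\gamma_0)$ were ever closer than some fixed $\delta$, one could close up a subarc into a contractible cycle transverse to $\F^{cs}_{\epsilon}$, which by Novikov's theorem (Theorem~\ref{theorem:Reeb}) would force a Reeb component. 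Hence the $\delta$-tube about $\widetilde{f}^n(\gamma_0)$ is embedded, its volume grows like a constant times $\lambda^n$, and \emph{that} is what contradicts polynomial volume growth. Without this (or some substitute) your step 3 does not go through.

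The second gap you identify yourself: even granting an ambient separation of order $\lambda^{n/4}$, the Albanese map $\phi$ annihilates the center of the Heisenberg group, and in the Carnot geometry a displacement of ambient size $R$ can project to a set of arbitrarily small $\phi$-diameter if it is purely vertical. Your two proposed fixes are not yet proofs. Saying a noncompact leaf cannot stay in a fixed compact neighborhood of a single fiber is a qualitative statement about the infinite leaf and gives no quantitative control on the finite arcs $\widetilde{f}^n(\gamma_0)$. Pushing down to the base torus is essentially what the paper does, but it does so by quoting Waldhausen to replace $f$ by a fiber-preserving $f_1$ with explicit linear (in the no-partial-hyperbolicity case) growth on vertical lines, from which it deduces polynomial growth of diameters of balls directly in $\widetilde{M}$ under $\widetilde{f}$, bypassing the Albanese map and its degeneracy entirely. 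If you wish to salvage the Albanese route, you must replace the ambient-diameter lower bound by a lower bound on the $\phi$-diameter, and the natural way to do that is again via Novikov: two points of $\widetilde{f}^n(\gamma_0)$ with nearby $\phi$-images lie on a contractible-looking transversal to a Reebless foliation, so you need the transversal-cycle argument even in the projected picture.

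In short: you reproduce the easy parts (the reduction to a circle bundle, the determinant observation, the bounded-cocycle estimate for $\phi\circ\widetilde{f}$), but the actual engine of the paper's proof --- the Reebless foliation $\F^{cs}_{\epsilon}$ plus Novikov's theorem, used to show iterated unstable arcs sweep out embedded tubes of exponential volume --- is absent, and without it neither of your two acknowledged difficulties can be resolved.
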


Note that the definition of partial hyperbolicity for the induced action on homology is relaxed to include cases in which the center direction may not even exist. Also observe that if $\pi_1(M)$ is solvable, then the induced action on $H_1(M, \mathbb{R})$ can be trivial. A time-one map of an Anosov flow from Example~\ref{example:suspension} is isotopic to the identity, and therefore, has a trivial induced action on $H_1(M, \mathbb{R}).$ So the induced action on  $H_1(M, \mathbb{R})$ need not be partially hyperbolic when $\pi_1(M)$ is solvable.

These theorems are inspired by the recent work of Brin, Burago, and Ivanov, who showed that if $M$ supports a strong partially hyperbolic diffeomorphism and $\pi_1(M)$ is abelian, then the induced action on $H_1 (M, \mathbb{R})$ is partially hyperbolic (there is an eigenvalue with absolute value greater than one and another eigenvalue with absolute value less than one).

We assume that the Geometrization Conjecture holds for the next result (see \cite{Perelman1}--\cite{Perelman3} and \cite{Morgan-Tian}).

\begin{theorem} \label{theorem:polynomial}
Let $M$ be a closed $3$-manifold that supports a partially hyperbolic diffeomorphism $f$. If $\pi_1(M)$ has subexponetial growth, then $M$ is finitely covered by a circle bundle over the torus.
\end{theorem}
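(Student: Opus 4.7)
The plan is to reduce to Corollary~\ref{corollary:solvable/nilpotent}~(2). It suffices to show that subexponential growth of $\pi_1(M)$ implies that $\pi_1(M)$ is almost nilpotent; combined with Gromov's theorem on groups of polynomial growth, this further reduces, under the Geometrization Conjecture, to proving that a closed $3$-manifold group of subexponential growth must in fact have polynomial growth.

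To establish this, I would proceed by case analysis along the prime and JSJ decompositions of $M$. If the prime decomposition of $M$ has two or more non-trivial factors, then $\pi_1(M)$ is a non-trivial free product, which (except in degenerate cases such as $\mathbb{R}P^3 \# \mathbb{R}P^3$, whose fundamental group is infinite dihedral) contains a non-abelian free subgroup of rank two and hence has exponential growth; the degenerate cases can be handled separately, for instance by observing that such manifolds do not admit partially hyperbolic diffeomorphisms. If some prime summand admits a non-trivial JSJ decomposition, Bass--Serre theory on the associated tree produces a non-abelian free subgroup in $\pi_1$, again forcing exponential growth. Hence $M$ must be a geometric manifold.

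Among Thurston's eight geometries, the models $\mathbb{H}^3$, $\mathbb{H}^2 \times \mathbb{R}$, and $\widetilde{SL_2}$ all produce closed manifolds whose fundamental groups contain surface subgroups and therefore have exponential growth, while Sol produces polycyclic fundamental groups of the form $\mathbb{Z}^2 \rtimes_A \mathbb{Z}$ with $A$ hyperbolic, which have exponential growth by the Milnor--Wolf theorem. The remaining four geometries $\mathbb{R}^3$, Nil, $S^3$, and $S^2 \times \mathbb{R}$ yield fundamental groups that are respectively virtually $\mathbb{Z}^3$, virtually $2$-step nilpotent, finite, and virtually $\mathbb{Z}$; in every case $\pi_1(M)$ is almost nilpotent. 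Corollary~\ref{corollary:solvable/nilpotent}~(2) then completes the argument.

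The main obstacle is the JSJ step: one must verify that a non-trivial JSJ splitting of a closed irreducible $3$-manifold always produces a non-abelian free subgroup in $\pi_1(M)$. The standard approach is a ping-pong argument on the Bass--Serre tree, but it requires distinguishing Seifert-fibered from atoroidal vertex groups and ensuring that one can choose translations along axes with disjoint attracting and repelling sets---a case analysis that will account for the bulk of the work.
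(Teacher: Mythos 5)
Your reduction to Corollary~\ref{corollary:solvable/nilpotent}~(2) and the observation that, under Geometrization, subexponential growth should force polynomial growth are both on target, and this is indeed how the paper packages the argument (via Proposition~\ref{proposition:polynomial}). However, your route through the decomposition is more circuitous than it needs to be and the JSJ step is not as clean as you suggest. First, the prime-decomposition discussion is superfluous: Theorem~\ref{theorem:universal} already gives that the universal cover is $\mathbb{R}^3$, so $M$ is irreducible, and there is no connect sum (and in particular the $\mathbb{R}P^3 \# \mathbb{R}P^3$ digression never arises). Second, the claim that a non-trivial JSJ decomposition produces a non-abelian free subgroup via Bass--Serre ping-pong is simply false without qualification: a Sol manifold contains an essential fiber torus and hence has a non-trivial torus decomposition, yet its fundamental group $\mathbb{Z}^2 \rtimes_A \mathbb{Z}$ is polycyclic and contains no non-abelian free subgroup. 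You do invoke Milnor--Wolf for Sol in the geometric case analysis, but you have put it on the wrong side of the dichotomy; the ping-pong argument must be stated with the torus (semi-)bundle exceptions carved out explicitly, which is precisely the case analysis you defer. The same issue arises for graph manifolds built from $T^2 \times I$ pieces.

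The paper's Proposition~\ref{proposition:polynomial} avoids the free-subgroup route entirely and is cleaner. It argues: if $M$ is non-Haken and not Seifert fibered, then $M$ is hyperbolic, contradicting subexponential growth; if $M$ is Haken, the JSJ decomposition produces atoroidal pieces that would carry hyperbolic geometry (again impossible), so the decomposition is trivial and $M$ is a graph manifold; subexponential growth then rules out $\mathbb{H}^2\times\mathbb{R}$ and $\widetilde{\mathrm{SL}}_2(\mathbb{R})$ base geometries and Sol, leaving only Euclidean or Nil, both of which are finitely covered by circle bundles over $T^2$. The advantage is that exponential growth is read off directly from the geometry of the pieces (hyperbolic pieces, hyperbolic base orbifolds, Sol), rather than having to manufacture a free subgroup from the graph-of-groups structure and then separately dispose of the exceptional cases where none exists. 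If you want to complete your version, you would need to prove the sharper statement that the only closed irreducible $3$-manifolds with a non-trivial torus decomposition and no non-abelian free subgroup are the Sol and flat torus (semi-)bundles, and then handle those directly; at that point your argument and the paper's essentially converge.
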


The following result is an immediate corollary to the results in \cite{BI}. In particular, it implies that there are no dynamically coherent partially hyperbolic diffeomorphisms on most lens spaces and reducible $3$-manifolds (like $T^3 \# T^3$).

\begin{theorem} \label{theorem:universal}
If $M$ is a closed $3$-manifold that supports a partially hyperbolic diffeomorphism, then the universal cover of $M$ is homeomorphic to $\mathbb{R}^3.$ In particular, $\pi_2(M)=0$ and $\pi_1(M)$ is infinite.
\end{theorem}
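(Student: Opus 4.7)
The plan is to combine the topological obstructions established by Burago--Ivanov in \cite{BI} with three classical tools from $3$-manifold topology: the Poincar\'e Conjecture, the Sphere Theorem, and the Geometrization Theorem. The argument proceeds in three steps: show that $\pi_1(M)$ is infinite, show that $\pi_2(M)=0$, and then appeal to geometrization to identify $\widetilde{M}$.

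\emph{Step 1: $\pi_1(M)$ is infinite.} I would argue by contradiction. If $\pi_1(M)$ were finite, the universal cover $\widetilde{M}$ would be a closed simply connected $3$-manifold, hence homeomorphic to $S^3$ by Perelman's resolution of the Poincar\'e Conjecture. Any lift $\widetilde{f}$ of $f$ to $\widetilde{M}$ is again partially hyperbolic, since the invariant splitting and the functions $\nu,\hat{\nu},\gamma,\hat{\gamma}$ of Definition~\ref{definition:phype} pull back through the finite covering. This would produce a partially hyperbolic diffeomorphism of $S^3$, contradicting the corresponding non-existence result of \cite{BI}.

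\emph{Step 2: $\pi_2(M)=0$.} Passing to the orientable double cover if necessary (which also supports a lifted partially hyperbolic diffeomorphism), I may assume $M$ is orientable. If $\pi_2(M) \neq 0$, the Sphere Theorem produces an embedded essential $2$-sphere in $M$, so by the Prime Decomposition Theorem either $M$ has an $S^2 \times S^1$ prime factor or $M$ admits a nontrivial connected-sum decomposition; in either case $M$ is reducible. The results of \cite{BI} rule out partially hyperbolic diffeomorphisms on such reducible closed $3$-manifolds, giving the desired contradiction.

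\emph{Step 3: Identifying $\widetilde{M}$.} With $\pi_1(M)$ infinite and $\pi_2(M)=0$, the manifold $M$ is a closed, orientable, irreducible $3$-manifold with infinite fundamental group, and hence is aspherical. By the Geometrization Theorem (\cite{Perelman1}--\cite{Perelman3}, \cite{Morgan-Tian}), the universal cover of any such $3$-manifold is homeomorphic to $\mathbb{R}^3$, which completes the proof. The main obstacle is really just confirming that the relevant obstructions in \cite{BI}---nonexistence on $S^3$ and on reducible closed $3$-manifolds---are stated there in the generality needed here; the remaining $3$-manifold-topological input is classical.
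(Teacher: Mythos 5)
Your overall strategy is correct in outline but takes a genuinely different route from the paper, and it has one real gap. The paper proves the theorem directly from the Reebless $\epsilon$-foliations of Burago--Ivanov (Theorem~\ref{theorem:epsilon}) via the classical theorems of Novikov and Palmeira, entirely independent of Perelman's resolution of the Poincar\'e and Geometrization Conjectures; your proof, by contrast, invokes Poincar\'e, the Sphere Theorem with Prime Decomposition, and full Geometrization. The trade-off is worth flagging: the Novikov/Palmeira route shows that both intermediate facts you want---$\pi_1(M)$ infinite and $\pi_2(M)=0$---fall out directly from the existence of a Reebless codimension-one foliation with no spherical leaves, with no heavy $3$-manifold machinery required. (The paper deliberately isolates its single use of Geometrization in Theorem~\ref{theorem:polynomial}; this theorem is meant to be independent of it.)

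The gap is in Step 2. You cite \cite{BI} for nonexistence of partially hyperbolic diffeomorphisms on reducible closed $3$-manifolds, but that statement is not proved there; in fact, the paper explicitly presents it as a \emph{consequence} of the very theorem you are proving (see the sentence immediately preceding the statement of Theorem~\ref{theorem:universal}), so the citation would be circular. What \cite{BI} actually provides is Theorem~\ref{theorem:epsilon}: for $\epsilon$ small, $\F^{cs}_\epsilon$ and $\F^{cu}_\epsilon$ are Reebless. The missing step you need is Lemma~\ref{lemma:no_spheres}, which rules out spherical leaves in $\F^{cs}_\epsilon$ by using transversality with the unstable foliation $\F^u$ and the Reeb Stability Theorem. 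Once you have a Reebless foliation with no spherical leaves on the (finite cover of the) manifold, Theorem~\ref{theorem:Novikov}(1) forces $\pi_2(\overline{M})=0$, and Corollary~\ref{corollary:universal} (Novikov plus Palmeira) then gives $\widetilde{M}\cong\mathbb{R}^3$ immediately; the infiniteness of $\pi_1(M)$ follows for free since $\mathbb{R}^3$ is noncompact. In particular your Step 1 (Poincar\'e) and Step 3 (Geometrization) are both unnecessary once Step 2 is repaired along these lines.
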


There are infinitely many $3$-manifolds that have universal covers homeomorphic to $\mathbb{R}^3$ and do not support partially hyperbolic diffeomorphisms.

\begin{theorem} \label{theorem:hyperbolic}
There exist infinitely many hyperbolic $3$-manifolds that do not support dynamically coherent partially hyperbolic diffeomorphisms. This list includes the Weeks manifold.
\end{theorem}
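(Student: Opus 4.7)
The plan is to extract a topological obstruction from dynamical coherence and then invoke results from $3$-manifold topology showing that this obstruction fails on infinitely many closed hyperbolic $3$-manifolds, the Weeks manifold included. Concretely, I would show that a dynamically coherent partially hyperbolic diffeomorphism on a closed $3$-manifold forces the existence of a codimension-one Reebless foliation on the manifold, and then appeal to known results that many hyperbolic $3$-manifolds admit none.

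The first step is to analyze the invariant foliation $\F^{cs}$ supplied by dynamical coherence and show it has no Reeb components. Assuming toward a contradiction that $R$ is a Reeb component with toral boundary leaf $T$, some iterate $f^k$ preserves $T$, and the strong stable foliation $\F^s$ restricts to $T$ as a continuous $1$-dimensional foliation along which $f^k$ uniformly contracts arc length. Structure theorems for $1$-dimensional foliations on the torus leave only two cases: either a compact leaf exists, which some further iterate fixes and would then have to contract to itself (impossible for a circle), or all leaves are dense and irrational, in which case the uniform contraction of flow-box lengths is incompatible with the $f^k$-invariant slope of the foliation on $T$. The symmetric argument for $\F^{cu}$ and $\F^u$ completes this step. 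This reasoning mirrors the Novikov-type results established for the branching foliations of Burago and Ivanov in \cite{BI}; under dynamical coherence those branching foliations coincide with the honest foliations $\F^{cs}$ and $\F^{cu}$, so the conclusions transfer and yield that $\F^{cs}$ is a genuine Reebless foliation of $M$.

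The second step is to appeal to known obstructions for the existence of taut or Reebless codimension-one foliations on closed hyperbolic $3$-manifolds. Since such a manifold is irreducible and atoroidal, any Reebless foliation on it is automatically taut, so the non-existence results apply directly. I would cite the theorem that the Weeks manifold carries no taut foliation, together with the constructions producing infinite families with the same property (via Heegaard Floer homology, non-left-orderability of fundamental groups, and related techniques). Combining the two steps, for any $M$ on this infinite list the existence of a dynamically coherent partially hyperbolic diffeomorphism would produce a Reebless foliation on $M$, yielding the desired contradiction.

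The main obstacle I anticipate is the first step, specifically carrying out the Reeb-component argument at the honest level of regularity available, namely $C^0$ foliations tangent to merely H\"older distributions. The analysis on the toral leaf $T$ must accommodate possible exceptional minimal sets and low regularity of the return map along $\F^s$, and must rule out all topological types of $1$-dimensional $f^k$-invariant foliations on $T^2$ compatible with uniform contraction. I would proceed by importing the corresponding pieces of machinery from \cite{BI}, which handle precisely this obstruction in the branching-foliation setting, and then use dynamical coherence to promote their conclusions to $\F^{cs}$ and $\F^{cu}$.
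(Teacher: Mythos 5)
Your proposal follows essentially the same route as the paper: extract Reebless codimension-one foliations from dynamical coherence, then invoke the non-existence of Reebless foliations on suitable hyperbolic $3$-manifolds. The paper's proof is shorter because it simply cites Fenley's result (infinitely many hyperbolic $3$-manifolds on which every lamination contains a torus leaf bounding a solid torus, hence admitting no Reebless foliation) together with Calegari--Dunfield for the Weeks manifold, and does not attempt a self-contained argument that $\F^{cs}$ is Reebless --- it treats that as known, implicitly resting on the Burago--Ivanov machinery you also invoke.

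One caveat on your attempted direct argument for Reeblessness on the boundary torus $T$ of a hypothetical Reeb component $R$. The case where $\F^{s}|_{T}$ has a compact leaf is handled correctly, but the ``all leaves dense and irrational'' case is not a genuine dichotomy (exceptional minimal sets are possible at this regularity), and more importantly, ``uniform contraction is incompatible with the $f^k$-invariant slope'' is not by itself a contradiction --- the stable foliation of an Anosov automorphism of $T^2$ has irrational slope, is preserved, and is uniformly contracted. To close the argument along these lines you must also use that $f^k$ preserves the solid torus $R$, hence fixes the meridian class $[\mu]\in H_1(T)$ up to sign; an integral matrix with determinant $\pm1$ cannot simultaneously have an irrational eigendirection (the asymptotic slope of $\F^s|_T$, necessarily contracted since it is an irrational minimal set) and a rational invariant direction. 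Alternatively, the more standard route (closer to what \cite{BI} actually do) traps a segment of an unstable leaf inside $R$, iterates it to get exponentially long transversals in a bounded region, closes up a small arc, and contradicts the non-existence of null-homotopic closed transversals in a Reeb component. You flag this step as the main obstacle and defer to \cite{BI}, which is reasonable and matches the paper's own (implicit) dependence on that machinery. Your appeal to Heegaard Floer and non-left-orderability obstructions in the second step is a legitimate alternative source of infinitely many examples; the paper instead cites Fenley's explicit families and the Calegari--Dunfield result for Weeks.
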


The paper is organized in the following manner. In Section~\ref{section:universal} we show that Theorem~\ref{theorem:universal} follows easily from the recent results in \cite{BI}.  In Section~\ref{section:hyperbolic}, we prove Theorem~\ref{theorem:hyperbolic}. Theorem~\ref{theorem:solvable/nilpotent}, Corollary~\ref{corollary:solvable/nilpotent}, and Theorem~\ref{theorem:polynomial} are established in Section~\ref{section:solvable/nilpotent}. Section~\ref{section:homology} is devoted to the proof of Theorem~\ref{theorem:nilpotent}. In Section~\ref{section:dc3} and Section~\ref{section:coherence} we provide sufficient conditions for dynamical coherence.

\begin{theorem} \label{theorem:quasi-isometry}
Let $f$ be a strong partially hyperbolic diffeomorphism on a closed $3$-manifold $M.$ If $\pi_1(M)$ is nilpotent, the lifts of the unstable and stable foliations are quasi-isometric in the universal cover.
\end{theorem}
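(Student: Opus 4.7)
The plan is to adapt the Brin--Burago--Ivanov technique used on $T^3$ \cite{BBI2} to all closed $3$-manifolds with nilpotent fundamental group. By Theorem~\ref{theorem:solvable/nilpotent} the manifold $M$ is already a circle bundle over the torus, and by Theorem~\ref{theorem:universal} its universal cover $\tilde M$ is homeomorphic to $\mathbb{R}^3$. When the bundle is trivial, $M = T^3$ and the conclusion is precisely \cite{BBI2}, so the genuinely new case is that in which $\pi_1(M)$ is a cocompact lattice in the Heisenberg group and $\tilde M$ carries a left-invariant Nil metric of polynomial volume growth of degree four. Lift $f$ to $\tilde f\colon \tilde M \to \tilde M$ and let $\tilde\F^s, \tilde\F^u$ denote the lifted stable and unstable foliations.

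The first ingredient is the Burago--Ivanov branching foliation theorem, valid under strong partial hyperbolicity on any closed $3$-manifold: there exist $f$-invariant branching foliations $\mathcal{W}^{cs}$ and $\mathcal{W}^{cu}$ tangent to $E^{cs}$ and $E^{cu}$. I would then show that every leaf of the lifts $\tilde{\mathcal W}^{cs}$ and $\tilde{\mathcal W}^{cu}$ is a properly embedded topological plane in $\tilde M$, by ruling out Reeb components and compact (torus) leaves---using that $M$ is aspherical with $\pi_2(M)=0$, together with the standard Burago--Ivanov arguments tailored to strong partially hyperbolic systems---and then invoking Palmeira's theorem. Inside any such $cs$-plane, the restriction of $\tilde\F^s$ is a one-dimensional foliation by curves, and analogously for $\tilde\F^u$ inside $cu$-planes.

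With these planes in hand, the quasi-isometry claim reduces to a uniform linear lower bound of the form $d_{\tilde M}(p,q) \geq A^{-1}\, d_{\tilde\F^s}(p,q) - B$ for any two points $p, q$ on a common stable leaf, and likewise for $\tilde\F^u$. If such a bound failed, one could find stable arcs of arbitrarily large intrinsic length confined to ambient balls of bounded radius; translating these by the cocompact deck action would pack arbitrarily many pairwise disjoint and transversely separated stable arcs into a fixed compact region, and the transverse separation supplied by $\tilde{\mathcal W}^{cu}$ combined with the polynomial volume growth of $\tilde M$ yields a contradiction. The main obstacle, and the only place where the nilpotent case genuinely differs from \cite{BBI2}, is this final step: the original argument exploits the flat Euclidean geometry of $\widetilde{T^3}$, whereas in the Nil setting the metric distorts exponentially between horizontal and vertical directions, so the volume-packing step must proceed using only the polynomial (rather than cubic) volume growth supplied by Bass's theorem for the nilpotent group $\pi_1(M)$.
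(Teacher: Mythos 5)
Your reduction to the Nil case (non-trivial circle bundle over $T^2$) and your instinct to play exponential growth of intrinsic length off against the polynomial volume growth of the Heisenberg universal cover are both in line with the paper's strategy. However, there is a genuine gap in the step ``If such a bound failed, one could find stable arcs of arbitrarily large intrinsic length confined to ambient balls of bounded radius.'' Failure of quasi-isometry gives $p,q$ on a common stable leaf with $d_{\tilde{\F}^s}(p,q)$ much larger than $d_{\widetilde M}(p,q)$, but this controls only the two \emph{endpoints}, not the arc between them: the arc may wander arbitrarily far from both endpoints before returning, so it need not lie in any ball of controlled radius, and translating endpoints into a fundamental domain does not confine the arc. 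Some independent mechanism is needed to trap the arc, and this is exactly where the paper's machinery enters.

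The paper supplies the confinement as follows. Waldhausen's rigidity of the Seifert fibration isotopes $f$ to a fiber-preserving map, and then to a skew-product model $F$ of the type in Example~\ref{example:skew}, whose lifted foliations $\widetilde{\F}^{cs}_F$, $\widetilde{\F}^{cu}_F$, $\widetilde{\F}^{c}_F$ are globally understood. Lemma~\ref{lemma:half} (homological center bunching, proved by a preliminary tube-volume argument of its own) then forces every leaf of $\widetilde{\F}^{cs}_\epsilon$ to remain within bounded Hausdorff distance of the model leaf of $\widetilde{\F}^{cs}_F$ that it meets; otherwise points on such a leaf would have to spread under $\widetilde f$ at the rate $(\lambda^u)^n$ even though the center-stable spreading rate of $f$ is at most $(\hat\gamma^{-1})^n < (\lambda^u)^n$. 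With this comparison lemma in hand, the translates $T^n(\widetilde L)$ of a single $\F^{cs}_\epsilon$-leaf by a horizontal deck transformation become uniformly separated barriers, and the claim to prove is that some fixed length $K$ of unstable arc must cross one of them. The contradiction argument (assuming a length-$1$ unstable arc $J_n$ with $\widetilde f^n(J_n)$ trapped between two translates) is where polynomial volume growth finally bites: the iterate is confined to bounded distance from a model $cs$-leaf and, by a symmetric argument against $\widetilde{\F}^{cu}_F$, to bounded distance from a model $cu$-leaf, hence lies within bounded distance of a single model center leaf; its diameter along that center is at most linear in $n$, so it sits inside a ball of volume $O(n^4)$, while the Reebless property of $\F^{cs}_\epsilon$ (no self-approach, else a contractible transversal and a Reeb component by Theorem~\ref{theorem:Reeb}) gives it a tube of volume at least a constant times $(\hat\nu^{-1})^n$. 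Once the barrier lemma holds, the same Reebless fact means an unstable arc cannot cross a given translate twice, so an arc of length $nK$ crosses $n$ distinct translates and its endpoints are at least linearly far apart; that is the quasi-isometry. Your sketch gestures at the ingredients (Reebless/branching foliations, volume growth) but omits the Waldhausen model $F$, the homological center bunching lemma, and the $\widetilde{\F}^{cs}_\epsilon$-versus-$\widetilde{\F}^{cs}_F$ comparison, which are precisely what justify the confinement you assert and hence what carry the proof.
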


\begin{theorem}\label{theorem:dc3}
Let $f$ be a strong partially hyperbolic diffeomorphism on a closed $3$-manifold $M.$ If $\pi_1(M)$ is nilpotent, $f$ is dynamically coherent.
\end{theorem}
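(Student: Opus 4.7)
The plan is to follow the strategy developed by Brin, Burago, and Ivanov in \cite{BBI2} for the case of $T^3$. Their proof of dynamical coherence rests on two ingredients: (1) the existence of $f$-invariant branching foliations tangent to $E^{cs}$ and $E^{cu}$ (from \cite{BI}), and (2) a quasi-isometry property of the stable and unstable foliations in the universal cover. The first ingredient is available for any strong partially hyperbolic diffeomorphism on a $3$-manifold with universal cover $\mathbb{R}^3$, and by Theorem~\ref{theorem:universal} our $\tilde M$ is indeed $\mathbb{R}^3$. The second ingredient is exactly Theorem~\ref{theorem:quasi-isometry}. So in principle, dynamical coherence should follow by importing the BBI argument; the real content is to verify that the argument survives the change from $T^3$ to a nontrivial circle bundle over the torus (guaranteed by Theorem~\ref{theorem:solvable/nilpotent}).

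Concretely, I would first invoke the Burago--Ivanov theorem to produce $f$-invariant branching foliations $\F^{cs}_{bran}$ and $\F^{cu}_{bran}$ tangent to $E^{cs}$ and $E^{cu}$; recall that a branching foliation is a family of injectively immersed complete surfaces that can merge and re-separate but never cross transversally. Lift everything to $\tilde M$ to obtain branching foliations $\tilde\F^{cs}_{bran}$, $\tilde\F^{cu}_{bran}$ and genuine foliations $\tilde{W}^s$, $\tilde{W}^u$ tangent to the lifted $E^s$ and $E^u$. To prove dynamical coherence it suffices to show that $\tilde\F^{cs}_{bran}$ and $\tilde\F^{cu}_{bran}$ have no actual branching, i.e.\ that any two leaves through a common point coincide; descending this to $M$ produces honest $f$-invariant $C^0$ foliations tangent to $E^{cs}$ and $E^{cu}$.

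The core step is to eliminate branching using Theorem~\ref{theorem:quasi-isometry}. Suppose two leaves $L_1 \ne L_2$ of $\tilde\F^{cs}_{bran}$ contain a common point $p$. Each $L_i$ is saturated by stable leaves and by center curves, and by the properties of branching foliations the leaves $L_1$ and $L_2$ separate locally on at least one side. Pushing out transversally along the lifted unstable foliation $\tilde W^u$ from a point where the leaves have separated, one gets two distinct unstable leaves that must nevertheless fellow-travel a single unstable leaf at forward iterates (because stable distances contract under $\tilde f^{-1}$ and the $f$-invariance of the branching foliations forces merging behavior to be preserved). The quasi-isometry hypothesis for $\tilde W^u$ and $\tilde W^s$ in $\tilde M$ then forces the two leaves to remain at bounded distance globally, and a standard asymptotic argument, identical to the one in BBI and based on the fact that quasi-isometric unstable rays cannot shadow one another unless they coincide, produces the required contradiction.

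The hard part, and where care is needed, is verifying that the geometric arguments of BBI---which in their paper use the Euclidean metric on $\mathbb{R}^3$ as the universal cover of $T^3$---remain valid for the Nil geometry on $\tilde M$. The point is that the BBI argument is really metric-topological: it needs only that $\tilde M \cong \mathbb{R}^3$, that $\tilde W^s$ and $\tilde W^u$ have properly embedded $\mathbb{R}$-leaves (which follows from quasi-isometry), and that these leaves form an approximate product structure in balls of arbitrary radius. By passing to a finite cover, Theorem~\ref{theorem:solvable/nilpotent} lets us assume $M$ is actually a circle bundle over $T^2$, so $\tilde M$ carries either a flat or a Nil left-invariant metric; both are at most polynomially distorted, which is enough for the fellow-traveling and separation estimates to go through. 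Once branching is ruled out for both $\F^{cs}_{bran}$ and $\F^{cu}_{bran}$, dynamical coherence is immediate from the definition.
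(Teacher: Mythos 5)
The paper's proof of Theorem~\ref{theorem:dc3} is very short and consists of two pieces: the quasi-isometry of $\widetilde\F^s$ and $\widetilde\F^u$ established in Theorem~\ref{theorem:quasi-isometry}, and a direct citation of Brin's result in \cite{Brin}, which says precisely that quasi-isometry of the lifted stable and unstable foliations implies dynamical coherence. You correctly identify Theorem~\ref{theorem:quasi-isometry} as the central ingredient, but you do not seem to know about \cite{Brin}, and so you attempt to reconstruct a branching-elimination argument that the paper does not need to make.

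Your reconstruction has a genuine gap at the point where you try to convert quasi-isometry of the one-dimensional foliations into ``no branching'' of the two-dimensional branching foliations. The step ``one gets two distinct unstable leaves that must nevertheless fellow-travel a single unstable leaf at forward iterates \dots and a standard asymptotic argument \dots based on the fact that quasi-isometric unstable rays cannot shadow one another unless they coincide'' is not justified. Quasi-isometry of $\widetilde\F^u$ is a statement relating intrinsic leaf distance to ambient distance for two points \emph{on the same} unstable leaf; it says nothing that would prevent two \emph{distinct} unstable leaves from fellow-traveling forever (indeed distinct unstable leaves inside a common center-unstable leaf typically do stay within bounded distance of each other). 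So the contradiction you want does not follow from quasi-isometry in the way you describe, and the parenthetical ``because stable distances contract under $\widetilde f^{-1}$ and the $f$-invariance of the branching foliations forces merging behavior to be preserved'' does not supply the missing mechanism either. The clean way to finish, and what the paper actually does, is simply to invoke Brin's theorem: once $\widetilde\F^s$ and $\widetilde\F^u$ are quasi-isometric in $\widetilde M$, dynamical coherence follows directly from \cite{Brin}, with no detour through the Burago--Ivanov branching foliations.

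One further small inaccuracy: you describe the BBI2 strategy as resting on (1) branching foliations and (2) quasi-isometry, as if the branching foliations are used to pass from quasi-isometry to coherence. In fact \cite{Brin} predates \cite{BI}, and the deduction of dynamical coherence from quasi-isometry is a self-contained result of Brin that does not use branching foliations at all; branching foliations enter elsewhere (for instance in establishing the quasi-isometry itself and in proving Theorem~\ref{theorem:universal}).
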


The next result is not restricted to dimension three and applies whenever the partially hyperbolic diffeomorphism is center bunched (see Section~\ref{section:coherence} for more details).

\begin{theorem} \label{theorem:coherence}
Let $f$ be a partially hyperbolic diffeomorphism. If $f$ is center bunched and the distributions $E^{cs}$ and $E^{cu}$ are Lipschitz, then $f$ is dynamically coherent.
\end{theorem}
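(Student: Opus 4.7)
The plan is to reduce dynamical coherence to a Lipschitz version of the Frobenius integrability theorem (such as the one of Rampazzo--Sussmann): a Lipschitz distribution integrates to a Lipschitz foliation provided it is involutive at almost every point, where involutivity is checked using the Lie bracket of Lipschitz local sections, which by Rademacher's theorem is defined almost everywhere. The Lipschitz hypothesis on $E^{cs}$ and $E^{cu}$ will therefore reduce the theorem to showing that these two distributions are involutive a.e.; the $f$-invariance of the resulting foliations is automatic from the $Df$-invariance of the distributions.

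I will verify involutivity of $E^{cs}$ by a dynamical argument; the case of $E^{cu}$ is symmetric under $f\mapsto f^{-1}$. Fix a point $p$ at which $E^{cs}$ is differentiable and let $X,Y$ be Lipschitz local sections of $E^{cs}$. Decompose the bracket as $[X,Y](p)=V^{cs}+V^u$ with $V^{cs}\in E^{cs}(p)$ and $V^u\in E^u(p)$; the goal is to prove $V^u=0$. By naturality of the Lie bracket under diffeomorphisms,
\[
Df^n_p\bigl([X,Y](p)\bigr) \;=\; \bigl[(f^n)_*X,\,(f^n)_*Y\bigr]\bigl(f^n(p)\bigr).
\]
The $E^u$-component of the left side equals $Df^n(V^u)$, whose norm is bounded below by a uniform constant times $\hat{\nu}^{-n}|V^u|$ from the partial hyperbolicity of $f$. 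It suffices to bound the right side above by a quantity that grows strictly slower than $\hat{\nu}^{-n}$.

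To estimate the right side, observe that $(f^n)_*X=Df^n\circ X\circ f^{-n}$ has supremum norm of order $\hat{\gamma}^{-n}$, since $\|Df|_{E^{cs}}\|\leq\hat{\gamma}^{-1}$, while its Lipschitz constant can be controlled using the cocycle formula for $D(Df^n)$ together with the Lipschitz derivative of $X$ transported by $Df^n$ and by $Df^{-n}|_{E^{cs}}$. The center bunching hypothesis is precisely what ensures that the resulting upper bound on the bracket, measured in the $E^u$-direction, grows strictly slower than $\hat{\nu}^{-n}$, so the comparison forces $|V^u|\to 0$ as $n\to\infty$, hence $V^u=0$. Applying the Lipschitz Frobenius theorem then gives the invariant foliations $\mathcal{F}^{cs}$ and $\mathcal{F}^{cu}$.

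The main obstacle will be the Lipschitz estimate on $(f^n)_*X$ in the previous step. A naive bound using the operator norm of $Df^n$ on the full tangent bundle produces terms of order $\hat{\nu}^{-n}$ coming from $E^u$-expansion in the cocycle, and these would defeat the argument. The resolution is to project onto $E^u$ carefully at each step and absorb those contributions using the center bunching inequalities; this is the only place in the argument where center bunching is essential, and it is exactly the condition that rules out a nontrivial $E^u$-component of the bracket.
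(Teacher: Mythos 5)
Your high-level plan is the same as the paper's: invoke a Lipschitz Frobenius theorem (the paper uses Simi\'{c}'s version, you cite Rampazzo--Sussmann; this is a minor variation) and reduce to showing that $E^{cs}$ and $E^{cu}$ are involutive almost everywhere, using iteration of $f$ together with center bunching to kill the $E^u$-component of the bracket of two sections of $E^{cs}$. The identity you write,
\[
Df^n_p\bigl([X,Y](p)\bigr)=\bigl[(f^n)_*X,\,(f^n)_*Y\bigr]\bigl(f^n(p)\bigr),
\]
is correct and is implicitly what underlies the paper's argument as well.

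However, there is a genuine gap at precisely the point you flag as ``the main obstacle.'' You need an upper bound of size roughly $\hat{\gamma}_n^{-2}$ on the $E^u$-component of $[(f^n)_*X,(f^n)_*Y]$, and you propose to obtain it by bounding the sup norm \emph{and} the Lipschitz constant of $(f^n)_*X=Df^n\circ X\circ f^{-n}$ by $O(\hat{\gamma}_n^{-1})$. The sup-norm bound is fine since $E^{cs}$ is $Df$-invariant and $\|Df^n|_{E^{cs}}\|\le \hat{\gamma}_n^{-1}$, but the Lipschitz constant of $(f^n)_*X$ genuinely grows much faster. Differentiating $(f^n)_*X$ in a center-stable direction produces (i) a term $Df^n\cdot DX\cdot Df^{-n}$ in which $DX$ has a component transverse to $E^{cs}$ that is amplified by $Df^n$ on $E^u$ at rate $\hat{\nu}_n^{-1}$, while $Df^{-n}$ on $E^{cs}$ already contributes up to $\nu_n^{-1}$; and (ii) terms involving $D^2f^n$, which do not even exist for $C^1$ diffeomorphisms. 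Saying that one should ``project onto $E^u$ carefully at each step and absorb those contributions using the center bunching inequalities'' does not describe an actual estimate, and I do not see how to make this work.

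The paper avoids this problem entirely by dualizing. Instead of estimating Lipschitz constants of pushforward vector fields, it constructs a single fixed Lipschitz $1$-form $\alpha$ that annihilates $E^{cs}$ and is aligned with the unstable direction near an $\omega$-limit point of $p$, and uses the Cartan identity $d\alpha(V,W)=-\alpha([V,W])$ for sections $V,W$ of $\ker\alpha$. Then
\[
|f^{n*}\alpha(U)|=|d\alpha(f^n_*W,\,f^n_*V)|\le K\,\hat{\gamma}_n^{-1}(p')^2,
\]
where $K$ is the (fixed, $n$-independent) Lipschitz constant of $\alpha$; the only $n$-dependent quantities are the \emph{norms} of $f^n_*W$ and $f^n_*V$. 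Comparing with the lower bound $|f^{n*}\alpha(U)|\ge k_1\hat{\nu}_n^{-1}(p')|\alpha(U)|$ and invoking $\hat{\nu}<\hat{\gamma}^2$ gives the contradiction. This is exactly the workaround your sketch is missing. (The paper also observes that when $\dim E^c=1$ the center bunching hypothesis is not needed, which your sketch does not address.) I would therefore revise your argument to use a fixed Lipschitz annihilator $1$-form and the Cartan formula, rather than attempting to bound the Lipschitz norm of the pushforward fields directly.
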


\noindent
Sections \ref{section:universal} and \ref{section:solvable/nilpotent} contain preliminary background information and results; the material in these sections is purely topological. The main theorems of this article are contained in Section~\ref{section:homology} and Section~\ref{section:dc3}; these sections employ arguments that use dynamics. Section~\ref{section:coherence} is mostly self-contained and discusses results that are not restricted to dimension three. Section~\ref{section:hyperbolic} is a short digression into the area of essential laminations on hyperbolic $3$-manifolds. We end the paper with several open questions in Section~\ref{section:questions}.

The author would like to thank K.~Burns, R.~Saghin, and the referees for providing many suggestions and comments that have considerably improved the exposition in this article.
\section{Codimension one foliations on $3$-manifolds} \label{section:universal}

We will first present some classical results from the rich theory of codimension one foliations on $3$-manifolds. The books \cite{Fol1} and \cite{Fol2} are excellent references for all things pertaining to foliations.

\subsection{Reeb components and spherical leaves} 
Let $M$ be a closed $3$-manifold that supports a $C^0$, codimension one, orientable, and transversely orientable foliation $\F.$ The main scoundrel, or obstruction to the presence of a ``nice" structure for $\F$ and $M$, is a Reeb component.

\begin{definition}
Let $D^2$ be the closed unit disc and let $\mathbb{R}^{+}=\{x \in \mathbb{R} : x \geq 0 \}.$ Consider the submersion $g: D^2 \times \mathbb{R} \to \mathbb{R}^{+}$ defined by $g(r,\theta,z) = (1 - r^2) e^z.$ Let $g^{-1}(c)$ for $c>0$ be the leaves of the foliation $\widetilde{\cal{R}}$ on $D^2 \times \mathbb{R}.$ Now quotient by the covering translation $z \to z+1$ to obtain the famous Reeb foliation $\cal{R}$ on $D^2 \times S^1.$ The boundary cylinder of $D^2 \times S^1$ together with the Reeb foliation is the standard Reeb component. All \textit{Reeb components} are homeomorphic to this standard Reeb component.
\end{definition}

The other obstruction is a spherical leaf.  If one can show that there are no Reeb components and spherical leaves in $\F$, then the universal cover of $M$ is homeomorphic to $\mathbb{R}^3.$  This result is a direct consequence of some very famous theorems of Novikov and Palmeira.

\begin{theorem}[Novikov \cite{Novikov}] \label{theorem:Novikov}
Suppose that $\F$ contains no Reeb components.
\begin{enumerate}[\normalfont 1)]
\item If $\pi_2(M)$ is nontrivial, $\F$ contains a spherical leaf.
\item If $\pi_2(M)$ is trivial, then the lift of every leaf to the universal cover of $M$ is homeomorphic to a plane.
\end{enumerate}
\end{theorem}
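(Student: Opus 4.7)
The plan is to prove both parts by contradiction, in each case producing a \emph{vanishing cycle}---an essential loop $\gamma_0$ on a leaf $L_0$ of $\F$ that bounds an embedded disk in $M$ and that is a limit of loops $\gamma_t$ on nearby leaves, each bounding a disk on its own leaf. The geometric heart of Novikov's original argument, which I would invoke as a black box, asserts that the presence of a vanishing cycle forces the existence of a Reeb component. Thus, under the standing hypothesis that no Reeb component exists, it suffices to manufacture a vanishing cycle from each putative failure scenario.

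For part (1), I would assume $\pi_2(M) \neq 0$ but that no leaf of $\F$ is a sphere. The Sphere Theorem of Papakyriakopoulos yields an embedded essential $2$-sphere $S \subset M$. After a small isotopy, $S$ meets $\F$ transversally except at finitely many Morse-type tangencies, classified as centers or saddles. Because $\chi(S)=2$, the Poincar\'e--Hopf index count gives $\#\text{centers} - \#\text{saddles} = 2$, so at least one center tangency exists. An innermost center on $S$ bounds a small disk $\Delta \subset S$ whose boundary $\partial\Delta$ is an embedded circle on a leaf $L$ and which also bounds a disk in $M$. Sliding $\partial\Delta$ along a short transversal through the center and extending by the holonomy of $\F$ produces, in the limit, a vanishing cycle---unless the leaf carrying $\partial\Delta$ is itself the sphere $S$, which is forbidden by assumption. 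The black-box Reeb construction then supplies the contradiction.

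For part (2), I would assume $\pi_2(M) = 0$ and suppose that some leaf $\widetilde{L}$ of the lifted foliation $\widetilde{\F}$ on $\widetilde{M}$ fails to be simply connected, so that it carries an essential loop $\gamma$. Since $\widetilde{M}$ is simply connected, $\gamma$ is null-homotopic in $\widetilde{M}$; general position together with a loop-theorem-style tower argument (available because $\F$ is transversely orientable, so leaves are two-sided) lets me bound $\gamma$ by an embedded disk $D \subset \widetilde{M}$ transverse to $\widetilde{\F}$ away from finitely many Morse tangencies. An index count on $D$, using $\chi(D)=1$ and boundary tangent to a leaf, again forces a center tangency, and projecting the resulting local configuration down to $M$ yields a vanishing cycle on a leaf of $\F$. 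The same Reeb-component contradiction closes the argument, so every leaf of $\widetilde{\F}$ is a simply connected open orientable surface, hence homeomorphic to $\mathbb{R}^2$.

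The main obstacle is the vanishing-cycle-to-Reeb-component step, which is the deep geometric content of Novikov's paper and not something I would attempt to reproduce: my plan depends on invoking it as a named lemma from a modern reference such as \cite{Fol1} or \cite{Fol2}. A secondary technical point is the promotion, in part (2), of a null-homotopy of $\gamma$ in $\widetilde{M}$ to an embedded disk in good position with $\widetilde{\F}$; transverse orientability is the hypothesis that makes the tower argument go through. Modulo these two well-established devices, the proof reduces to counting Morse tangencies on surfaces of prescribed Euler characteristic and then applying Novikov's lemma.
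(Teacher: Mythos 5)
The paper does not prove this statement; it cites it as a classical theorem of Novikov (extended to $C^0$ foliations by Solodov with a transverse line field supplied by Siebenmann), so there is no ``paper's own proof'' to compare against. Your sketch does correctly identify the overall architecture of Novikov's argument---Sphere Theorem in part (1), an Euler characteristic count of centers and saddles on a test surface in general position, and the machinery of vanishing cycles---so it is aimed in the right direction.

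There is, however, a genuine gap at the place where you pass from a center tangency to a vanishing cycle. You write that ``sliding $\partial\Delta$ along a short transversal through the center and extending by the holonomy of $\F$ produces, in the limit, a vanishing cycle---unless the leaf carrying $\partial\Delta$ is itself the sphere $S$.'' This is not how the vanishing cycle is obtained, and the excluded case is confused: the leaf carrying $\partial\Delta$ cannot be $S$ at all, since $S$ is a test surface in general position, not a leaf. Near a center, every circle $\gamma_t$ of the singular foliation $\F|_S$ bounds a disk both in $S$ and on its own leaf $L_t$; nothing yet fails. Novikov's actual contribution---which is \emph{not} the same black box as ``vanishing cycle $\Rightarrow$ Reeb component'' and which your sketch cannot inherit from that black box---is the analysis that either this family of leaf-disks persists through the saddle tangencies (in which case one cancels tangency pairs, eventually forcing $S$, resp.\ the limiting sphere, onto a leaf), or it cannot be continued, and the frontier curve of the maximal family is precisely a vanishing cycle. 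Without that dichotomy, and in particular without treating what happens at saddle tangencies, the existence of a vanishing cycle does not follow from a single innermost center. The same omission appears in your part (2).

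Two secondary points. In part (2), invoking the Loop Theorem to embed a bounding disk is unnecessary---the vanishing cycle argument is run on an immersed disk in general position, and asking for an embedded one introduces tower-construction complications you do not need. And your conclusion that the simply connected lifted leaves are planes requires explicitly ruling out $S^2$: a compact leaf in $\widetilde{M}$ would descend to a leaf with finite fundamental group in $M$, and Reeb stability together with $\pi_2(M)=0$ is what excludes that, rather than the bare fact that the lift is simply connected.
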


This theorem was initially established for $C^2$ foliations in \cite{Novikov}. Later, Solodov in \cite{Solodov} generalized this result by proving the same theorem for $C^0$ foliations. In fact, Solodov remarks that the proof would be considerably easier if there existed a line field transverse to the foliation.  Note that Siebenmann has shown that this is always the case (see \cite{Siebenmann}).

\begin{theorem}[Palmeira \cite{Palmeira}] \label{theorem:Palmeira}
If $\widetilde{M}$ is a simply connected manifold foliated by codimension one leaves homeomorphic to $\mathbb{R}^{n-1}$, then $\widetilde{M}$ is homeomorphic to $\mathbb{R}^n.$
\end{theorem}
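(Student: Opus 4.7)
The plan is to reconstruct the product structure $\mathbb{R}^{n-1}\times\mathbb{R}$ on $\widetilde{M}$ from a globally defined one-dimensional transverse foliation. First I would fix a continuous line field on $\widetilde{M}$ transverse to $\widetilde{\F}$ (available by Siebenmann's theorem, invoked immediately before the statement) and integrate it to obtain a one-dimensional foliation $\widetilde{\F}^{\perp}$. The pair $(\widetilde{\F},\widetilde{\F}^{\perp})$ admits local product charts, so every point of $\widetilde{M}$ has a neighborhood homeomorphic to $\mathbb{R}^{n-1}\times(-\varepsilon,\varepsilon)$ in which the two foliations correspond to the two factors.

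Next I would argue that the holonomy of $\widetilde{\F}^{\perp}$ along any loop inside a leaf $L$ of $\widetilde{\F}$ is trivial. Since $L\cong\mathbb{R}^{n-1}$ is simply connected, every such loop is null-homotopic, and transporting a null-homotopy through the local product charts forces the associated holonomy map to be the identity. A direct consequence is that each leaf $\tau$ of $\widetilde{\F}^{\perp}$ meets each leaf of $\widetilde{\F}$ in at most one point: otherwise, a short subarc of $\tau$ joined to a path in the $\widetilde{\F}$-leaf that closes up its endpoints would produce a loop in $\widetilde{M}$ that exhibits nontrivial transverse motion in the local charts, contradicting trivial holonomy together with simple connectivity of $\widetilde{M}$.

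Fix a leaf $L_0$ of $\widetilde{\F}$ and a leaf $\tau_0$ of $\widetilde{\F}^{\perp}$ meeting at a base point $p_0$, and define a map $\Phi:L_0\times\tau_0\to\widetilde{M}$ by holonomy transport: for $(x,t)\in L_0\times\tau_0$, choose any path in $L_0$ from $p_0$ to $x$ and slide $t$ along it using the holonomy of $\widetilde{\F}^{\perp}$. The result is independent of the path because $L_0$ is simply connected and the holonomy along loops in $L_0$ is trivial. A standard path-lifting argument in the foliated charts shows that $\Phi$ is a local homeomorphism, and then a covering map onto $\widetilde{M}$. Since $L_0\times\tau_0\cong\mathbb{R}^{n-1}\times\mathbb{R}$ is simply connected and $\widetilde{M}$ is simply connected by hypothesis, $\Phi$ is a global homeomorphism, yielding $\widetilde{M}\cong\mathbb{R}^n$.

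The main obstacle I anticipate is that the leaf space $\widetilde{M}/\widetilde{\F}$ need not be Hausdorff, so one cannot realize $\widetilde{M}$ cleanly as a bundle over a $1$-manifold; the product structure has to be built by hand on $\widetilde{M}$ itself using $\widetilde{\F}^{\perp}$. The most delicate step in the argument above is verifying that $\Phi$ is onto: its image is open and nonempty by the local product structure, and showing that the image is also closed requires combining simple connectivity of $\widetilde{M}$ with the global consistency of holonomy transport along an arbitrary path chosen to reach a candidate limit point.
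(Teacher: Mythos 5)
The paper does not actually prove this statement --- it is quoted from Palmeira's article --- so your argument has to stand on its own, and it does not: the map $\Phi$ you construct is in general neither globally well defined nor surjective, and the failure is not a technical loose end but is exactly the phenomenon that makes the theorem hard. First, holonomy transport of a point $t\in\tau_0$ along a leafwise path in $L_0$ is only defined as a germ at $p_0$: for a fixed $t$ at positive distance from $p_0$, the plaque chain may terminate before the path is exhausted, so ``slide $t$ along the path'' has no meaning for long paths. More decisively, even where $\Phi$ is defined its image need not be closed. Consider the foliation of $\mathbb{R}^2$ whose leaves are the vertical lines $\{x=c\}$ for $|c|\ge 1$ together with the curves $y=(1-x^2)^{-1}+c$ in the strip $|x|<1$, crossed with $\mathbb{R}^{n-2}$. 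All leaves are homeomorphic to $\mathbb{R}^{n-1}$ and the ambient manifold is $\mathbb{R}^n$, so both the hypotheses and the conclusion of the theorem hold; but the leaf space is a non-Hausdorff branched $1$-manifold, and a single transversal $\tau_0$ through a point of $\{x=-2\}\times\mathbb{R}^{n-2}$ meets only the leaves on one branch: it never reaches $\{x=1\}\times\mathbb{R}^{n-2}$ or anything to its right. Hence $\Phi(L_0\times\tau_0)$ is a proper open subset whose closure contains leaves it misses, and $\Phi$ is not a covering map onto $\widetilde{M}$. Indeed, if your argument worked it would show that $(\widetilde{M},\widetilde{\F})$ is globally a product foliation, which this example refutes.

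The step you flag as ``the most delicate'' (closedness of the image) is therefore not deliverable by holonomy transport from a single base leaf and a single transversal. Palmeira's actual proof proceeds differently: he reduces to dimension two by showing that $(\widetilde{M},\widetilde{\F})$ is equivalent to $(\Sigma,\F_2)\times\mathbb{R}^{n-2}$ for a foliation $\F_2$ of a simply connected surface by lines, and then uses the Haefliger--Reeb description of such planar foliations via their (generally non-Hausdorff, branched, simply connected) leaf spaces, assembling the manifold from product pieces glued according to that branching. Your local ingredients --- triviality of holonomy along leafwise loops and the fact that a transversal meets each leaf at most once, both correct in this setting --- are part of that story, but the global assembly must follow the branching of the leaf space rather than a single transversal, and that is where the real work of the theorem lies.
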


Again, this result was initially proved for $C^2$ foliations.  However, the existence of a transverse line field to a foliation is sufficient to prove the same result for $C^0$ foliations.

A direct consequence of Theorem~\ref{theorem:Novikov} and Theorem~\ref{theorem:Palmeira} is the following result.

\begin{corollary} \label{corollary:universal}
If $\pi_2(M)$ is trivial and if $\F$ contains no Reeb components, then the universal cover of $M$ is homeomorphic to $\mathbb{R}^3.$ 
\end{corollary}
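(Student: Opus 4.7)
The plan is to chain together Theorem~\ref{theorem:Novikov} and Theorem~\ref{theorem:Palmeira}, since both hypotheses of the corollary are tailored precisely to feed into them in sequence.

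First I would apply part 2 of Novikov's theorem directly. By hypothesis $\pi_2(M)=0$ and $\F$ has no Reeb components, so Theorem~\ref{theorem:Novikov} tells us that each leaf of $\F$ lifts to the universal cover $\widetilde{M}$ as a plane, that is, homeomorphic to $\mathbb{R}^2$. The lifted objects are the leaves of the pulled-back foliation $\widetilde{\F}$ obtained via the covering projection $p:\widetilde{M}\to M$; this pullback is still a codimension-one foliation (the defining local charts pull back to charts in $\widetilde{M}$), and by construction every leaf of $\widetilde{\F}$ is a connected component of $p^{-1}(L)$ for some leaf $L$ of $\F$, hence one of the planes produced by Novikov.

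Next I would apply Palmeira's theorem to $\widetilde{M}$ equipped with $\widetilde{\F}$. The manifold $\widetilde{M}$ is simply connected by definition, it has dimension $3$, and we have just shown that $\widetilde{\F}$ is a codimension-one foliation whose leaves are all homeomorphic to $\mathbb{R}^2$. These are exactly the hypotheses of Theorem~\ref{theorem:Palmeira} with $n=3$, which gives $\widetilde{M}\cong \mathbb{R}^3$, the desired conclusion.

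There is essentially no technical obstacle beyond invoking the two theorems as black boxes; the whole content is the statement that the hypotheses match. The one subtlety worth flagging, which the excerpt itself emphasizes, is regularity: the foliations arising later from partially hyperbolic diffeomorphisms will typically only be $C^0$, so one must appeal to Solodov's $C^0$ version of Novikov's theorem and the analogous $C^0$ version of Palmeira's theorem, both of which are available thanks to Siebenmann's construction of a transverse line field. Once that regularity caveat is handled, the proof is a two-step citation and nothing more.
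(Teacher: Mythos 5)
Your proof is correct and takes essentially the same route as the paper: apply part 2 of Novikov's theorem to see that the lifted leaves are planes, then feed this into Palmeira's theorem with $n=3$. The paper simply states this as a direct consequence of the two theorems, and your regularity caveat about the $C^0$ versions (via Solodov and Siebenmann) matches the paper's own discussion of the same point.
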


\begin{definition}
Let $M$ be a manifold that supports a codimension one foliation $\F.$ A \textit{contractible cycle} is a closed, null-homotopic curve that is transverse to the foliation $\F.$
\end{definition}

The following result will be used repeatedly.

\begin{theorem}[Novikov \cite{Novikov}] \label{theorem:Reeb}
Let $M$ be a closed $3$-manifold with a codimension one foliation $\F$ that is orientable and transversely orientable. If there exists a contractible cycle, then $\F$ contains a Reeb component.
\end{theorem}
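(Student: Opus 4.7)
The plan is to implement the classical vanishing-cycle argument of Novikov. Given the contractible transverse cycle $\gamma$, I would first fix a null-homotopy $H \colon D^2 \to M$ with $H|_{\partial D^2} = \gamma$ and, after a small general-position perturbation, pull $\F$ back to a singular foliation $\F_D$ on the disc. Because $\gamma$ is transverse to $\F$, the boundary $\partial D^2$ is everywhere transverse to $\F_D$, and the interior tangencies can be taken to be of generic type (centers and saddles). An index count on $D^2$ (the boundary contributes $+1$, centers contribute $+1$, saddles contribute $-1$) forces the existence of at least one center-type singularity, around which leaves of $\F_D$ are nested closed curves.

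The next step is to extract a vanishing cycle. Among all closed leaves of $\F_D$ encircling a center I would take a maximal one $\alpha \subset D^2$; its image lies in a single leaf $L$ of $\F$, and the subdisc of $D^2$ bounded by $\alpha$ maps to an immersed disc $\Delta \subset M$ whose interior is transverse to $\F$ and whose boundary is $\alpha$. Combining this with a limiting argument as $\alpha$ is pushed outward in $D^2$ produces a genuine vanishing cycle: a closed curve $\alpha \subset L$ bounding an immersed disc transverse to $\F$ off its boundary, which is a limit of null-homotopic closed curves $\alpha_t$ lying in nearby leaves of $\F$.

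From the vanishing cycle I would then construct the Reeb component. Using the transverse line field provided by Siebenmann's theorem, I push the spanning disc $\Delta$ into nearby leaves on one side of $L$, obtaining a family of immersed transverse discs $\Delta_t$ with $\partial \Delta_t = \alpha_t$. On the opposite side the family cannot be extended indefinitely, and a suitable maximal extension yields a half-open annulus immersed in $M$ transverse to $\F$, whose limit set $K$ is a non-empty compact $\F$-saturated subset. A holonomy analysis along $\alpha$ shows that $K$ is in fact a single compact leaf; since $M$ is a closed $3$-manifold and codimension-one compact leaves have vanishing Euler characteristic, $K$ is a torus. The contracting holonomy on the pushed side then identifies, via Reeb stability, the component of $M \setminus K$ on that side as a solid torus foliated as a Reeb component.

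The main obstacle is the construction of the vanishing cycle, and in particular the limiting step that produces a closed curve lying entirely in a single leaf of $\F$: in the $C^0$ setting the pulled-back singular foliation $\F_D$ can have badly behaved tangencies, so the perturbation and the subsequent limit must be controlled geometrically using the transverse line field from Siebenmann's theorem, as in Solodov's extension of Novikov's argument. Once the vanishing cycle is available, the passage to a compact torus leaf and its identification as the boundary of a Reeb component is a standard application of holonomy analysis and Reeb stability.
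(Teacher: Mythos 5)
This theorem is cited in the paper as a classical result of Novikov; the paper offers no proof of it (and in the surrounding discussion notes that the $C^0$ case is due to Solodov, with the transverse line field supplied by Siebenmann). So there is no in-paper argument to compare against. Your sketch is a recognisable outline of the standard vanishing-cycle proof, and you correctly identify where the $C^0$ difficulties lie.

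Two imprecisions are worth flagging if you intend this as more than a road map. First, the ``maximal closed leaf of $\F_D$ encircling a center'' need not exist as a closed leaf: the union of the concentric closed leaves around a center is an open annular region, and its frontier in $D^2$ can be a saddle graph rather than a circle. The standard construction instead tracks the first parameter at which the image curves in $M$ stop being null-homotopic \emph{inside their leaves of $\F$}, and takes the vanishing cycle to be that limiting curve; the outermost-closed-leaf picture is only a heuristic. Second, the claim that ``codimension-one compact leaves have vanishing Euler characteristic'' is not a general fact (a closed leaf can perfectly well have higher genus). In Novikov's argument the limit leaf $K$ is a torus because it carries a foliation by the accumulating circles $\alpha_t$ (equivalently, it bounds the solid torus swept out by the pushed discs), and it is that structural reason, not a blanket constraint on compact leaves, that forces $\chi(K)=0$. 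Both points are repairable, but they sit on precisely the steps that make the theorem nontrivial.
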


\subsection{The universal cover is $\mathbb{R}^3$}

Let $M$ be a closed $3$-manifold that supports a partially hyperbolic diffeomorphism $f$. After lifting to a finite cover $\overline{M}$ if necessary, assume that the distributions $E^{s}$, $E^{u}$, $E^{cs}$ and $E^{cu}$ are orientable; we abuse notation here and use the same names for the lifts of these distributions. Under these conditions, Burago and Ivanov have demonstrated the existence of two codimension one foliations $\F^{cs}_{\epsilon}$ and $\F^{cu}_{\epsilon}$ that are ``almost tangent" to $E^{cs}$ and $E^{cu}$ respectively.

\begin{theorem}[Burago and Ivanov \cite{BI}] \label{theorem:epsilon}
For every $\epsilon > 0$, there exist  $C^0$ foliations  $\F^{cs}_{\epsilon}$ and $\F^{cu}_{\epsilon}$ with $C^1$ leaves such that the angles between $T \F^{cs}_{\epsilon}$ and $E^{cs}$ and the angles between $T \F^{cu}_{\epsilon}$ and $E^{cu}$ are no greater than $\epsilon.$ If $\epsilon$  is sufficiently small, the foliations $\F^{cs}_{\epsilon}$ and $\F^{cu}_{\epsilon}$ are Reebless.
\end{theorem}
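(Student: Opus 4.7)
Proof plan. Since $E^{cs}$ and $E^{cu}$ need not be integrable, the approach is indirect: first build ``branching foliations'' tangent to $E^{cs}$ and $E^{cu}$---families of complete $C^1$ surfaces that may merge but do not cross transversally---and then extract honest $C^0$ foliations with $C^1$ leaves by a smoothing procedure. The Reebless assertion for small $\epsilon$ is then derived dynamically, using that $E^u$ is uniformly transverse to $\F^{cs}_\epsilon$ and is uniformly expanded by $f$.

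To construct the branching foliation $W^{cs}$, for each $p \in M$ take a small $C^1$ disk $D_p$ through $p$ tangent to $E^{cs}(p)$ and iterate forward. Domination of $E^s$ by $E^c$ and of $E^c$ by $E^u$, together with the uniform transversality of $E^u$ to $E^{cs}$, yields a graph-transform argument: each $f^n(D_p)$ extends as a $C^1$ graph over a larger and larger ball in $E^{cs}$, with $E^u$ as the ``vertical'' direction. A Hausdorff limit produces a complete immersed surface $L^{cs}(p)$ tangent to $E^{cs}$, and a parallel construction gives $W^{cu}$. Because each $L^{cs}(p)$ is tangent to the $2$-plane field $E^{cs}$ everywhere, any two such surfaces meeting at a point share the same tangent plane, so transverse crossings are automatically ruled out; a graph-comparison argument along unstable arcs then shows that these surfaces fit together into a coherent branching structure (merging allowed, but no crossings and no uncontrolled tangencies).

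To pass to an honest foliation $\F^{cs}_\epsilon$, cover $M$ by flow boxes adapted to $W^u$, parametrize the branches of $W^{cs}$ inside each box by their intersections with unstable arcs, mollify these parametrizations at a scale $\delta(\epsilon)$ that collapses nearby branches into a single smoothed $C^1$ leaf, and integrate the resulting plane field (now integrable because branching has been averaged away). Uniform continuity of $E^{cs}$ together with the smoothing scale $\delta(\epsilon)$ ensures that $T\F^{cs}_\epsilon$ lies within angle $\epsilon$ of $E^{cs}$; the analogous construction yields $\F^{cu}_\epsilon$.

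For the Reebless assertion, suppose $\F^{cs}_\epsilon$ contains a Reeb component $R$ for some small $\epsilon$. Since $E^u$ makes a definite angle with $T\F^{cs}_\epsilon$, each leaf of $W^u$ plays the role of a transverse curve to the foliation on $R$, and inside $R$ the foliation is modeled on the standard Reeb foliation. A $W^u$-leaf entering $R$ must then either close up---impossible, since unstable leaves in a partially hyperbolic system are diffeomorphic to $\mathbb{R}$---or spiral and accumulate on $\partial R$ in a way that contradicts the uniform local product structure of $W^u$ together with its exponential length-expansion under $f^n$. This rules out Reeb components for small $\epsilon$. The main obstacle throughout is the branching foliation construction in the second paragraph: controlling the limits of iterates and verifying the branching (rather than merely tangential) structure requires delicate use of the dominated splitting, whereas the mollification step and the Reebless argument become comparatively routine once the branching foliations are in hand.
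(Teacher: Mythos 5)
This theorem is cited from Burago--Ivanov \cite{BI}; the paper does not prove it (the only local contribution is Lemma~\ref{lemma:no_spheres}, which is a separate add-on). So there is no in-paper proof to compare against, and your proposal has to be judged against the actual BI argument.

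Your outline captures the broad BI strategy --- build branching foliations tangent to $E^{cs}$, $E^{cu}$ by a graph-transform/limit argument, then collapse the branching by an averaging/smoothing procedure to get genuine $C^0$ foliations whose tangent planes are $\epsilon$-close, and finally derive Reeblessness dynamically. You are also right that the branching-foliation construction is where essentially all the work lives. Two places where the sketch, as written, would not survive being filled in. First, the claim that tangency ``automatically'' rules out transverse crossings is false: two surfaces can share a tangent plane at a point and still cross (think of $y=x^3$ versus $y=0$). In BI the non-crossing property is proved by a genuine argument comparing graphs over $E^u$ and exploiting one-dimensionality of the transverse unstable direction together with how the graph transform acts; it does not follow from pointwise tangency. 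Relatedly, the final step ``mollify\dots and integrate the resulting plane field (now integrable)'' misdescribes the construction: averaging a non-integrable plane field does not make it integrable, and BI produce the approximating foliation directly as a foliation by explicit surfaces built from the branching leaves, not by first producing a plane field to integrate. Second, the Reebless step is too vague to be accepted as a proof. ``Close up or spiral and accumulate on $\partial R$'' is the right dichotomy, but accumulation of unstable leaves on themselves and on compact sets is ubiquitous in partially hyperbolic dynamics and is not, by itself, a contradiction. What actually produces the contradiction in BI is a structural argument about the branching foliation in the universal cover (separation of leaves / absence of null-homotopic closed transversals), combined with the relationship between $\F^{cs}_\epsilon$ and the branching foliation via the map $h_\epsilon$; the expansion-versus-trapping intuition you describe needs to be converted into something of that form. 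As a roadmap your proposal is pointing in the right direction, but these two links --- non-crossing of branches and the Reebless step --- are not yet arguments.
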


In \cite{BI} it is also shown that there exists a continuous map $h_\epsilon : \overline{M} \to \overline{M}$ such that $\textrm{dist}_{C^0}(h_\epsilon, id_M) < \epsilon$ and $h_\epsilon$ sends every leaf in $\F^{cs}_{\epsilon}$ to a surface tangent to $E^{cs}$ (there is another continuous map with identical properties that sends leaves in $\F^{cu}_{\epsilon}$ to surfaces tangent to $E^{cu}$). 

It is not too difficult to see that the foliations $\F^{cs}_{\epsilon}$ and $\F^{cu}_{\epsilon}$ have no spherical leaves.

\begin{lemma} \label{lemma:no_spheres}
The foliations $\F^{cs}_{\epsilon}$ and $\F^{cu}_{\epsilon}$ do not contain spherical leaves when $\epsilon$ is sufficiently small.
\end{lemma}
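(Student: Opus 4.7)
The strategy is to contradict the fact that $S^2$ has Euler characteristic $2$: a spherical leaf of $\F^{cs}_\epsilon$ would carry a continuous nowhere-vanishing line field, obtained from the strong stable distribution $E^s\subset E^{cs}$. The argument for $\F^{cu}_\epsilon$ is identical (replace $(E^s,E^{cs})$ by $(E^u,E^{cu})$), so I only describe the first case.

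First I fix $\epsilon$ small. By Theorem~\ref{theorem:epsilon}, the $2$-plane $T_p\F^{cs}_\epsilon$ lies within angle $\epsilon$ of $E^{cs}(p)$ at every $p\in\overline{M}$. Since $E^s(p)$ is a line contained in $E^{cs}(p)$, a routine Grassmannian estimate gives that the angle between $E^s(p)$ and the plane $T_p\F^{cs}_\epsilon$ is also at most $\epsilon$. I choose $\epsilon$ small enough that this angle is everywhere less than, say, $\pi/4$.

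Next, suppose for contradiction that some leaf $S$ of $\F^{cs}_\epsilon$ is homeomorphic to $S^2$. At each $p\in S$ we have $T_pS=T_p\F^{cs}_\epsilon$, so by the previous paragraph $E^s(p)$ is not orthogonal to $T_pS$. Orthogonal projection onto $T_pS$ therefore sends $E^s(p)$ to a nonzero line $\ell(p)\subset T_pS$, and continuity of $E^s$ together with that of the projection produces a continuous line subbundle $\ell$ of $TS$. Under the standing reduction to orientable invariant distributions, $E^s$ is oriented, and so is $\ell$; equivalently, $\ell$ defines a continuous nowhere-vanishing vector field on $S\cong S^2$. This contradicts Poincar\'e--Hopf, since $\chi(S^2)=2\neq 0$, and establishes the lemma. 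The only delicate point in the whole argument is the angle estimate in the second paragraph, but it is elementary linear algebra; everything else is a textbook application of the hairy-ball theorem.
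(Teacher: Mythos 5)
Your proof is correct, and it takes a genuinely different and in fact more elementary route than the paper. The paper's argument first invokes the Reeb Stability theorem to conclude that a spherical leaf forces $\overline{M}\cong S^2\times S^1$ with $\F^{cs}_\epsilon$ the product foliation by spheres, then uses transversality of the unstable foliation $\F^u$ to the spheres and a fixed-point argument for the first-return map to produce a closed leaf of $\F^u$, contradicting the standard fact that strong unstable foliations have no compact leaves. Your argument works directly on the hypothetical spherical leaf $S$: since $E^s\subset E^{cs}$ and $T_pS=T_p\F^{cs}_\epsilon$ makes angle at most $\epsilon$ with $E^{cs}(p)$, the orthogonal projection of $E^s$ to $TS$ is a continuous nowhere-vanishing line field on $S\cong S^2$, which (after the paper's standing orientability reduction, or even without it) contradicts $\chi(S^2)\neq 0$. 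Both proofs ultimately exploit the same topological obstruction, but yours bypasses Reeb Stability, the global structure of $\overline{M}$, and the dynamical fact about closed unstable leaves, needing only the angle estimate and Poincar\'e--Hopf; this is cleaner and more self-contained. The one point to make explicit is that the projection $\ell(p)$ varies continuously in $p\in S$, which requires continuity of $T_pS$ along the leaf; this holds because the Burago--Ivanov leaves are $C^1$, but it is worth a sentence since $\F^{cs}_\epsilon$ is only a $C^0$ foliation.
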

\begin{proof}
Suppose that $\F^{cs}_{\epsilon}$ contains a spherical leaf. The Reeb Stability Theorem implies that $M$ is homeomorphic to $S^2 \times S^1$ and $\F^{cs}_{\epsilon}$ is the (product) foliation by spheres on $\overline{M}$, the appropriate finite cover. Let $\F^u$ be the unstable foliation associated with $f.$ Now $\epsilon$ can be chosen sufficiently small so that the lift of $\F^u$ to $\overline{M}$ is transverse to all the spherical leaves in $\F^{cs}_{\epsilon}.$ So $\F^u$ must contain a closed leaf, since the return map to any sphere must have a fixed point. However, the unstable foliation $\F^u$ does not contain any closed leaves. A similar argument shows that $\F^{cu}_{\epsilon}$ does not contain spherical leaves also when $\epsilon$ is sufficiently small.
\end{proof}

So an immediate corollary to the results in \cite{BI} is that the universal cover of $M$ is homeomorphic to $\mathbb{R}^3.$  This result is not explicitly stated in \cite{BI}.

\begin{theorem:universal}
If $M$ is a closed $3$-manifold that supports a partially hyperbolic diffeomorphism, then the universal cover of $M$ is homeomorphic to $\mathbb{R}^3.$ In particular, $\pi_2(M)=0$ and $\pi_1(M)$ is infinite.
\end{theorem:universal}
\begin{proof}
Consider the codimension one foliation $\F^{cs}_{\epsilon}$ on $\overline{M}$, the appropriate finite cover of $M.$ Assume that $\epsilon$ is small enough to insure that there are no Reeb components in $\F^{cs}_{\epsilon}$ and the lift of $\F^u$ to $\overline{M}$ is transverse to $\F^{cs}_{\epsilon}.$ If $\pi_2(\overline{M})$ is nontrivial, Theorem~\ref{theorem:Novikov} implies the existence of a spherical leaf. According to Lemma~\ref{lemma:no_spheres}, $\F^{cs}_{\epsilon}$ contains no spherical leaves. Therefore, $\pi_2(\overline{M})$ is trivial, and now the theorem follows from Corollary~\ref{corollary:universal}.
\end{proof}

\section{Hyperbolic $3$-manifolds} \label{section:hyperbolic}

In this section we observe that recent results about  foliations on hyperbolic $3$-manifolds imply that there are infinitely many hyperbolic $3$-manifolds that do not support dynamically coherent partially hyperbolic diffeomorphisms. The theorems stated below immediately imply Theorem~\ref{theorem:hyperbolic}.

\begin{theorem}[Fenley \cite{Fenley_laminar}]
There exist infinitely many hyperbolic $3$-manifolds that do not support essential laminations.
\end{theorem}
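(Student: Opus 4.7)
The plan is to exhibit infinitely many hyperbolic $3$-manifolds and certify for each that it carries no essential lamination. The natural starting point is the Weeks manifold $M_W$ itself, followed by an infinite family of ``cousins'' produced by Dehn surgery on a suitable cusped hyperbolic manifold.

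The main technical tool I would use is the theorem of Gabai and Oertel: an essential lamination $\Lambda$ on $M$ determines an action of $\pi_1(M)$ on the leaf space of the lift of $\Lambda$ to the universal cover, and this leaf space carries the structure of an order tree on which the action has no global fixed point. Hence it suffices to show that $\pi_1(M)$ admits no non-trivial action on any order tree. For $M_W$, I would exploit the fact that $H_1(M_W;\mathbb{Z})$ is finite, so $\pi_1(M_W)$ is perfect, combined with the explicit two-generator presentation of $\pi_1(M_W)$ and the rigidity of the discrete faithful representation into ${\rm PSL}_2(\mathbb{C})$, to reduce any potential order-tree action to one with a global fixed point.

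To manufacture infinitely many hyperbolic examples, I would fix a cusped hyperbolic manifold $N$ whose fundamental group shares the relevant rigidity features of $\pi_1(M_W)$, and consider Dehn fillings $N_{p/q}$. Thurston's hyperbolic Dehn surgery theorem guarantees that all but finitely many fillings are hyperbolic. The remaining task is to transfer the non-existence of essential laminations from $N$ to $N_{p/q}$ for infinitely many slopes, which I would attempt via a limiting argument on the associated actions on order trees together with careful control over how the filling handle affects the leaf space (in particular, ruling out that the meridian of the filling torus can fix a non-degenerate arc in the tree for infinitely many slopes).

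The principal obstacle is precisely the order-tree analysis. Unlike $\mathbb{R}$-trees, where Rips theory and Bass--Serre theory give strong structural constraints on group actions, order trees allow much wilder branching and no analogous machine is available off the shelf. The hardest single step is to establish, even for one concrete presentation such as that of $\pi_1(M_W)$, that no non-trivial order-tree action exists, because one must simultaneously rule out ``linear'' behaviour (actions factoring through an $\mathbb{R}$-tree, where the machinery at least applies) and all of the more exotic branching configurations that genuine laminations can produce.
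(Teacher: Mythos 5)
Your proposal contains a gap that you yourself identify as unfilled, and it is not a detail: you never establish that $\pi_1$ of any concrete hyperbolic $3$-manifold admits no non-trivial action on an order tree. That non-action statement is the entire content of the theorem under your chosen strategy, and, as you note, there is no Rips-type machinery for order trees that would make it tractable. Compounding this, your starting point is wrong. Calegari--Dunfield's result (cited separately in this paper) rules out Reebless foliations on the Weeks manifold, which is strictly weaker than ruling out essential laminations; a manifold can carry an essential lamination while carrying no Reebless foliation, and the Weeks manifold is not among Fenley's examples. So you have no base case from which to launch the proposed transfer-by-Dehn-filling argument, and that limiting argument over order-tree actions is itself only sketched.

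Fenley's actual approach in \cite{Fenley_laminar} avoids order trees entirely and is genuinely different from yours. He considers an explicit infinite family $\mathcal{M}$ of closed hyperbolic $3$-manifolds obtained by Dehn surgery along a closed curve in a torus bundle over the circle, and shows by direct geometric analysis that every lamination on $M \in \mathcal{M}$ must contain a torus leaf bounding a solid torus. Such a leaf is prohibited in an essential lamination, so none exists. This is constructive and does not lean on any unavailable structural theory of group actions on order trees. If you want the actions-on-trees flavour, the closer comparison is Roberts--Shareshian--Stein \cite{RSS}, but their conclusion is only about Reebless foliations, and even there the analysis is carried out slope-by-slope for specific fillings rather than transported abstractly.
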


The author considers an infinite family of $3$-manifolds $\cal{M}$ obtained by doing Dehn surgery along a closed curve in a torus bundle over the circle. The remarkable result established is that every lamination, and hence every foliation,  on $M \in \cal{M}$ must contain a torus leaf bounding a solid torus.  The interested reader should also see \cite{RSS} for a similar result.

Now, any manifold in the infinite family $\cal{M}$ cannot support dynamically coherent partially hyperbolic diffeomorphisms since the foliations $\F^{cs}$ and $\F^{cu}$ must be Reebless.

\begin{theorem}[Calegari and Dunfield \cite{Calegari&Dunfield}]
The Weeks manifold does not support any Reebless foliations.
\end{theorem}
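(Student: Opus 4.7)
My plan is to proceed by contradiction, assuming the Weeks manifold $W$ carries a Reebless foliation $\mathcal{F}$, and to force a faithful action of $\pi_1(W)$ on a circle, which a separate combinatorial and numerical argument rules out for this specific group.

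First I would upgrade $\mathcal{F}$ to a taut foliation. Because $W$ is hyperbolic, it is atoroidal and irreducible, so Novikov's theorem together with Reeb stability rules out sphere leaves, torus leaves bounding Reeb components, and dead-end components; hence $\mathcal{F}$ is taut. Next I would invoke the Thurston-Calegari \emph{universal circle} construction, which associates to a taut foliation of a closed hyperbolic $3$-manifold a canonical action of $\pi_1(W)$ by orientation-preserving homeomorphisms on a circle $S^1_{\mathrm{univ}}$, together with $\pi_1$-equivariant monotone maps from the circles at infinity of individual leaves of the lifted foliation $\widetilde{\mathcal{F}}$ to $S^1_{\mathrm{univ}}$. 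Faithfulness of this action follows from hyperbolicity of $W$ via the leaf-pocket argument of Calegari-Dunfield: any nontrivial element acting trivially on $S^1_{\mathrm{univ}}$ would have to preserve every leaf of $\widetilde{\mathcal{F}}$ and commute with the cocompact action of $\pi_1(W)$ on $\mathbb{H}^3$, which is impossible for a torsion-free non-elementary Kleinian group.

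With a faithful homomorphism $\rho: \pi_1(W) \to \mathrm{Homeo}^+(S^1)$ in hand, the problem reduces to showing that $\pi_1(W)$ admits no such representation. My plan would be to use the explicit finite presentation of $\pi_1(W)$ arising from its SnapPea triangulation, classify candidate representations up to semiconjugacy by their bounded Euler class in $H^2_b(\pi_1(W);\mathbb{Z})$, and verify that each candidate must violate some defining relation.

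The main obstacle is this last step, since circle actions of $3$-manifold groups deform in positive-dimensional families and a purely algebraic enumeration is not available. My strategy, following Calegari and Dunfield, would be a rigorous numerical analysis of rotation numbers: for each relator, the rotation numbers of the images of generators must satisfy rigid identities, and I would use interval arithmetic over a suitably compactified moduli space of actions to show that no admissible collection of rotation numbers simultaneously satisfies all the relations. I expect this computer-assisted step to be the crux of the argument and the technically hardest part.
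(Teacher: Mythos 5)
The paper does not prove this theorem; it is cited directly as a result of Calegari and Dunfield, and the surrounding text only draws the consequence that the Weeks manifold admits no dynamically coherent partially hyperbolic diffeomorphism. So what is at stake is whether your outline reconstructs a correct proof of their theorem.

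Your first two steps track the Calegari--Dunfield framework correctly in outline: on a closed hyperbolic (hence atoroidal) manifold, Reebless forces taut (a torus leaf would be $\pi_1$-injective by Novikov, contradicting atoroidality, so there are no torus leaves and hence no dead-end components), and a taut foliation on such a manifold yields a universal circle carrying a faithful $\pi_1$-action. Your stated mechanism for faithfulness is off, though: the kernel of the universal circle action is a normal subgroup of $\pi_1(W)$, and the argument must rule out such a normal kernel on structural grounds tied to the leaf space; acting trivially on the universal circle does not imply ``commuting with the cocompact action on $\mathbb{H}^3$,'' so that step does not follow as written.

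The genuine gap is your third step. Rotation number is a semi-conjugacy invariant of a single circle homeomorphism, not of a representation of a group: two representations of $\pi_1(W)$ into $\mathrm{Homeo}^+(S^1)$ can assign identical rotation numbers to every element and still fail to be semi-conjugate. So ``rigid identities among rotation numbers of the generators' images imposed by the relators'' do not constrain the set of actions in the way your plan requires. The complete semi-conjugacy invariant is the bounded Euler class (Ghys), which lives in an infinite-dimensional bounded cohomology group and is not accessible to ``interval arithmetic over a suitably compactified moduli space'' — there is no natural finite-dimensional compact moduli space of circle actions of $\pi_1(W)$ on which to run such a computation. What is actually needed is a direct obstruction to $\pi_1(W)$ being circularly orderable, and Calegari and Dunfield establish this by a discrete, combinatorial case analysis of possible cyclic orders on short words in the generators of an explicit two-generator presentation of $\pi_1(W)$, not by numerical control of rotation numbers. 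As written, your third step would not close the argument.
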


This result implies that there are no dynamically coherent partially hyperbolic diffeomorphisms on the Weeks manifold.

\begin{remark}
The assumption of dynamical coherence in Theorem~\ref{theorem:hyperbolic} can be dropped if the distributions $E^{cs}$ and $E^{u}$ are orientable and transversely orientable. In this case, we simply use $\F^{cs}_{\epsilon}$, with $\epsilon$ sufficiently small to insure that $\F^{cs}_{\epsilon}$ is Reebless. The problem here is that Brin, Burago, and Ivanov prove the existence of $\F^{cs}_{\epsilon}$ under the assumption of orientability and transverse orientability.  To do this, they lift to the appropriate finite cover and it is not clear if $\F^{cs}_{\epsilon}$ on this finite cover descends to a foliation on the original manifold.
\end{remark}

\begin{remark}
There are infinitely many hyperbolic $3$-manifolds that do support dynamically coherent partially hyperbolic diffeomorphisms. Surgery constructions on standard examples of Anosov flows give rise to Anosov flows on infinitely many hyperbolic $3$-manifolds (see \cite{Goodman}), and the time-one maps of these flows are partially hyperbolic and dynamically coherent.
\end{remark}

The problem of classifying partially hyperbolic diffeomorphisms on hyperbolic $3$-manifolds remains open. Is every partially hyperbolic diffeomorphism on a hyperbolic $3$-manifold a perturbation of some time-one map of an Anosov flow? More generally, we may ask the following question.

\begin{question}
Does there exist a $3$-manifold with exponential growth in its fundamental group that supports a partially hyperbolic diffeomorphism but does not support an Anosov flow?
\end{question}

\section{Almost solvable/nilpotent fundamental groups} \label{section:solvable/nilpotent}

Theorem~\ref{theorem:solvable/nilpotent} and Corollary~\ref{corollary:solvable/nilpotent} follow from the classification of $3$-manifolds with almost solvable and almost nilpotent fundamental groups in \cite{Evans&Moser}.

\begin{theorem}[Evans and Moser]
Let $M$ be a closed $3$-manifold and suppose that $\pi_2(M) = 0$ and $\pi_1(M)$ is infinite.
\begin{enumerate}[\normalfont 1)]
\item If $\pi_1(M)$ is solvable, $M$ is finitely covered by a torus bundle over the circle. 
\item If $\pi_1(M)$ is nilpotent, $M$ is a circle bundle over the torus.
\end{enumerate}
\end{theorem}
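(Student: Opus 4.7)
The plan is to invoke the Geometrization Theorem to reduce both statements to a small list of geometric models. Under the hypotheses, $M$ is aspherical: the universal cover $\widetilde{M}$ is simply-connected with $\pi_2(\widetilde{M}) = \pi_2(M) = 0$, and an open $3$-manifold has the homotopy type of a $2$-complex, so $\widetilde{M}$ is contractible and $\pi_1(M)$ is torsion-free of cohomological dimension $3$. Geometrization then gives a geometric decomposition of $M$.

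For~(1), I would first rule out hyperbolic pieces, whose fundamental groups contain non-abelian free subgroups incompatible with solvability. A non-trivial JSJ amalgamation over an incompressible torus likewise produces a non-abelian free quotient unless the pieces are all Seifert or Sol-modelled with compatible fibres. A case analysis then shows that closed aspherical $3$-manifolds with solvable $\pi_1$ are in fact geometric; among the eight Thurston geometries, the only ones with solvable fundamental group are $\mathbb{E}^3$, Nil, and Sol. Each model is finitely covered by a torus bundle over $S^1$: flat manifolds by $T^3 = T^2 \times S^1$, Nil-manifolds by mapping tori of parabolic $SL_2(\mathbb{Z})$-matrices acting on $T^2$, and Sol-manifolds by their defining Anosov torus-bundle structures. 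That settles~(1).

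For~(2), I plan to invoke Mal'cev rigidity in place of Geometrization proper. Being torsion-free, finitely generated, and nilpotent of Hirsch length $3$, the group $\pi_1(M)$ embeds as a cocompact lattice in a simply-connected nilpotent Lie group of dimension $3$; the only such groups are $\mathbb{R}^3$ and the Heisenberg group $H$. If $\pi_1(M) \cong \mathbb{Z}^3$, topological rigidity of aspherical $3$-manifolds with free abelian fundamental group forces $M \cong T^3$, the trivial $S^1$-bundle over $T^2$. Otherwise $\pi_1(M)$ is a Heisenberg lattice; its centre $Z$ is infinite cyclic and normal, and the central extension $1 \to Z \to \pi_1(M) \to \mathbb{Z}^2 \to 1$ exhibits $M$ as a principal $S^1$-bundle over $T^2$ whose Euler number is nonzero (encoded in the non-trivial $2$-cocycle).

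The hard step, I expect, is the sharpening in~(2) from ``finitely covered by'' to ``\emph{is}'' a circle bundle over $T^2$. Statement~(1) allows any convenient finite cover, so one only needs to recognise a torus-bundle model inside each of $\mathbb{E}^3$, Nil, Sol. In~(2) the fibration must live on $M$ itself, so one must both identify the geometric $S^1$-action coming from the centre of Nil (in the Heisenberg case) and appeal to $3$-manifold rigidity to rule out exotic aspherical homotopy $T^3$'s (in the abelian case). Once these two identifications are in place, the conclusion follows directly from the central short exact sequence.
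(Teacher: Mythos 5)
The paper does not prove this theorem; it cites it as a result of Evans and Moser (1972), whose proof is purely combinatorial and group-theoretic and predates both Thurston's Geometrization Conjecture and its resolution by several decades. Your proof via Geometrization and Mal'cev rigidity is therefore a genuinely different (and anachronistic) route. The broad strategy is sound: asphericity reduces the problem to the geometric models, Geometrization restricts closed aspherical $3$-manifolds with solvable $\pi_1$ to $\mathbb{E}^3$, Nil, and Sol for part~(1), and Mal'cev's theorem together with $3$-manifold rigidity identifies $M$ in part~(2). The one step that deserves more care is the passage from the central extension $1 \to Z \to \pi_1(M) \to \mathbb{Z}^2 \to 1$ to the assertion that $M$ \emph{is} a principal $S^1$-bundle over $T^2$: a group-level central extension does not by itself produce a fibration of the manifold, and you must invoke either topological rigidity for aspherical $3$-manifolds (a consequence of Geometrization/Waldhausen for Haken manifolds) or, more economically, the Seifert Fibered Space Theorem (Scott, Mess, Gabai, Casson--Dunfield/Jungreis: an irreducible $3$-manifold whose $\pi_1$ has an infinite cyclic normal subgroup is Seifert fibered). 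You flag rigidity in the abelian case but the Heisenberg case requires the identical appeal. You should also note that passing from $\pi_2(M) = 0$ to irreducibility of $M$ uses the sphere theorem together with the Poincar\'e conjecture. The trade-off between the two proofs is clear: Evans--Moser is self-contained and accessible with the $3$-manifold theory of the early 1970s, whereas your argument, while conceptually transparent, quietly absorbs much of late-20th-century and 21st-century $3$-manifold topology as black boxes.
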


Moreover, if $\pi_1(M)$ is nilpotent, $M$ is a torus bundle over the circle and the monodromy map has the form $\left( \begin{array}{cc} 1 & n \\ 0 & 1 \end{array} \right),$ where $n$ is an integer.

\begin{corollary}[Evans and Moser] 
Let $M$ be a closed $3$-manifold and suppose that $\pi_2(M) = 0$ and $\pi_1(M)$ is infinite.
\begin{enumerate}[\normalfont 1)]
\item If $\pi_1(M)$ is almost solvable, $M$ is finitely covered by a torus bundle over the circle. 
\item If $\pi_1(M)$ is almost nilpotent, $M$ is finitely covered by a circle bundle over the torus.
\end{enumerate}
\end{corollary}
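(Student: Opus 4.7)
The plan is to deduce this corollary from the preceding Evans--Moser theorem by a straightforward covering-space argument. The key point is that the hypotheses $\pi_2(M)=0$, closedness of $M$, and infiniteness of $\pi_1(M)$ all descend to any finite cover, so passing to a cover where the fundamental group is genuinely solvable (respectively genuinely nilpotent) reduces matters immediately to the theorem already stated.

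First, assume $\pi_1(M)$ is almost solvable (respectively almost nilpotent), so by definition there is a normal subgroup $H \trianglelefteq \pi_1(M)$ of finite index that is solvable (respectively nilpotent). Let $p \colon \widetilde{M} \to M$ be the regular finite cover corresponding to $H$, so that $\pi_1(\widetilde{M}) \cong H$ and $\widetilde{M}$ is again a closed $3$-manifold.

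Next, I would verify the hypotheses of the Evans--Moser theorem for $\widetilde{M}$. Since covering maps induce isomorphisms on $\pi_n$ for $n \geq 2$, we have $\pi_2(\widetilde{M}) \cong \pi_2(M) = 0$. Since $H$ has finite index in the infinite group $\pi_1(M)$, the group $\pi_1(\widetilde{M}) = H$ is infinite. By construction it is solvable (respectively nilpotent). Applying the Evans--Moser theorem to $\widetilde{M}$: in case (1), $\widetilde{M}$ is finitely covered by a torus bundle over the circle, and composing that cover with $p$ shows $M$ itself is finitely covered by a torus bundle over the circle; in case (2), the theorem gives the stronger conclusion that $\widetilde{M}$ \emph{is} a circle bundle over the torus, so via $p$ alone $M$ is finitely covered by a circle bundle over the torus.

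There is no real obstacle in this argument; the only points worth double-checking are the transfer of the two hypotheses $\pi_2 = 0$ and $\pi_1$ infinite across finite covers, both of which are elementary. In particular, no dynamics or foliation theory enters at this stage, so the deduction is purely group-theoretic and topological once the Evans--Moser theorem is in hand.
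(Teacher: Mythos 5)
Your proof is correct and is precisely the deduction the paper intends by labeling the statement a corollary of the Evans--Moser theorem: pass to the finite cover corresponding to the finite-index solvable (resp.\ nilpotent) subgroup, verify that the hypotheses $\pi_2 = 0$ and $\pi_1$ infinite persist (standard facts about finite covers), apply the theorem there, and compose covering maps. The paper gives no separate argument, so there is nothing further to compare.
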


If $M$ supports a partially hyperbolic diffeomorphism, $\pi_2(M) = 0$ and $\pi_1(M)$ is infinite since the universal cover of $M$ is homeomorphic to $\mathbb{R}^3$. Now Theorem~\ref{theorem:solvable/nilpotent} and Corollary~\ref{corollary:solvable/nilpotent} follow from the results above.

\begin{definition} \label{definition:polynomial}
Suppose that $\pi_1(M)$ is finitely generated and $\{g_1,g_2, \dots, g_k \}$ is a symmetric generating set. Let $B_n = \{ \gamma \in \pi_1(M) : \p \gamma \p \leq n\}$ where  $\p \, \p$ is the word length measured using the generators $\{g_1,g_2, \dots, g_k \}.$ Then $\pi_1(M)$ \textit{has polynomial growth} if for all positive integers $n$,
\begin{equation*}
| B_n | \leq P(n), 
\end{equation*}
where $P(x)$ is some fixed polynomial and $| B_n |$ is the number of elements in $B_n.$

The fundamental group of $M$ \textit{has exponential growth} if 
\[
\lim_{n \to \infty} \frac{\ln \abs{B_n}}{n} > 0.
\]

We say that $\pi_1(M)$ \textit{has subexponential growth} if 
\[
\lim_{n \to \infty} \frac{\ln \abs{B_n}}{n} =0.
\]

\end{definition}

Note that the limits above always exist and these definitions are independent of the choice of generators. Also, polynomial growth implies subexponential growth, but subexponetial growth does not imply polynomial growth. However, in the case of fundamental groups of $3$-manifolds, subexponential growth often means polynomial growth. The result below is ``folklore'' and the proof is probably well known to the experts.  However, the author could not find a reference, and so, a short proof is included for the sake of completeness. For the next proposition, we assume that the Geometrization Conjecture holds. The reader is referred to the excellent survey article \cite{Scott} for the necessary background on the Geometrization Conjecture. 

\begin{proposition} \label{proposition:polynomial}
Let $M$ be a closed, $P^2$-irreducible $3$-manifold. If $\pi_1(M)$ is infinite and has subexponential growth, $M$ is finitely covered by a circle bundle over the torus. In particular, $\pi_1(M)$ is almost nilpotent, and hence, $\pi_1(M)$ has polynomial growth.
\end{proposition}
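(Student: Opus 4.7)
The plan is to reduce, via geometrization, to showing that $\pi_1(M)$ is virtually nilpotent and then invoke the Evans--Moser corollary stated above. Since $M$ is $P^2$--irreducible, the Sphere Theorem gives $\pi_2(M)=0$, so together with the standing hypothesis that $\pi_1(M)$ is infinite, the Evans--Moser classification will close the argument as soon as $\pi_1(M)$ is shown to be almost nilpotent.

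To pass from subexponential growth to almost nilpotency, I would run a standard two--step argument. First, invoke a Tits alternative for $3$--manifold groups (this is where geometrization enters): every subgroup of $\pi_1(M)$ is either virtually solvable or contains a non--abelian free subgroup $F_2$. Since $F_2$ has exponential growth, the subexponential hypothesis forces the virtually solvable alternative. Second, apply Wolf's theorem: a finitely generated virtually solvable group of subexponential growth is virtually nilpotent. Combining these, $\pi_1(M)$ is almost nilpotent, so the Evans--Moser corollary applies. The final sentence of the proposition, that $\pi_1(M)$ has polynomial growth, then follows either from Gromov's polynomial growth theorem or by direct inspection of the Nil and $E^3$ structures on the finite cover.

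The hard part will be justifying the Tits alternative, which I plan to handle by cases on the JSJ decomposition of $M$. A hyperbolic JSJ piece contributes a non--elementary Kleinian subgroup of $\pi_1(M)$, which contains $F_2$; a Seifert piece over a hyperbolic $2$--orbifold contributes the $\pi_1$ of such an orbifold, again containing $F_2$. If every piece is Seifert over a Euclidean base orbifold, then either the JSJ is trivial and $M$ is geometric with $E^3$, Nil, or Sol geometry, or the JSJ is non--trivial, in which case a ping--pong argument on the Bass--Serre tree (with edge stabilizers $\mathbb{Z}^2$ and strictly larger vertex stabilizers coming from the Seifert structure) produces $F_2$. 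Sol geometry has exponential growth and is excluded by hypothesis, leaving $E^3$ and Nil, whose closed manifolds are finitely covered by $T^3$ or a Heisenberg nilmanifold respectively---both circle bundles over $T^2$. The main technical obstacle is the ping--pong step on the Bass--Serre tree of a graph manifold, where one must verify from the Seifert data that the amalgamation indices over $\mathbb{Z}^2$ are large enough to force a rank--two free subgroup; once this is done, the Evans--Moser corollary completes the proof immediately.
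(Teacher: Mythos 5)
Your route---geometrization, a Tits alternative for $\pi_1(M)$, Wolf's theorem (finitely generated virtually solvable of subexponential growth $\Rightarrow$ virtually nilpotent), then the Evans--Moser corollary---is structurally different from the paper's argument. The paper skips the group-theoretic layer entirely: it uses geometrization and the JSJ decomposition to rule out, on growth grounds, hyperbolic manifolds, hyperbolic JSJ pieces, Seifert pieces over hyperbolic base orbifolds, and Sol geometry, and concludes directly that $M$ is a closed Seifert fibered space with $E^3$ or Nil geometry, hence (citing Scott's survey) finitely covered by a circle bundle over $T^2$. Your version is more conceptual and makes the ``almost nilpotent'' clause transparent; the paper's is shorter because it never has to establish the Tits alternative.

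There is, however, a genuine gap in your ping-pong step. If the JSJ is nontrivial and every piece is Seifert fibered over a Euclidean base orbifold, then those pieces are $T^2\times I$ or twisted $I$-bundles over the Klein bottle. For $T^2\times I$ the vertex group $\mathbb{Z}^2$ \emph{equals} the edge group, so it is not ``strictly larger''; for the twisted $I$-bundle it contains the edge group with index $2$, which is still too small for ping-pong. Indeed no $F_2$ can be produced here: graph manifolds all of whose JSJ pieces are Euclidean Seifert pieces are exactly the torus bundles and torus semi-bundles, which are geometric with $E^3$, Nil, or Sol geometry and have polycyclic (in particular solvable) fundamental groups. So the Tits dichotomy still holds in this case, but on the solvable side rather than the $F_2$ side. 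The fix is to replace the ping-pong with the observation that an all-Euclidean graph manifold is a torus (semi-)bundle, hence $E^3$, Nil, or Sol, with Sol excluded by subexponential growth exactly as you already argue elsewhere; alternatively, cite the Tits alternative for $3$-manifold groups as a known consequence of geometrization rather than re-deriving it. A related small inaccuracy: Sol manifolds are not Seifert fibered, so in your trivial-JSJ, all-Euclidean-Seifert case only $E^3$ and Nil occur; Sol arises precisely in the nontrivial-JSJ case that your ping-pong was meant to handle.
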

\begin{proof}
Without loss of generality, assume that $M$ is orientable. We will first establish that $M$ is a Seifert fibered space. If $M$ is not Haken and not Seifert fibered, $M$ is hyperbolic since $\pi_1(M)$ is infinite. However, the assumption of subexponential growth in $\pi_1(M)$ implies that $M$ cannot be hyperbolic. So assume that $M$ is Haken. Now, the irreducibility assumption permits us to use the JSJ-Decomposition (see \cite{Johannson} and \cite{Jaco&Shalen}) to obtain a \textit{minimal} Seifert fibered space $\Sigma$ such that every component of $M - \mathring{\Sigma}$ is atoriodal and has hyperbolic geometry. This is impossible unless $M - \mathring{\Sigma}$ is a disjoint union of tori and each component of $\Sigma$ is Seifert fibered. Again, the assumptions on the manifold $M$ imply that no component of $\mathring{\Sigma}$ can have $H^2 \times \mathbb{R}$ or $\widetilde{\textrm{SL}}_2(\mathbb{R})$ geometry. In this case, either $M$ is Seifert fibered or $M$ has Sol geometry. Since $\pi_1(M)$ does not have exponential growth, $M$ cannot support Sol geometry. So $M$ is a Seifert fibered space.
It now follows that $M$ has either Euclidean or Nil geometry because $\pi_1(M)$ is infinite and has subexponential growth. All closed $3$-manifolds with Euclidean or Nil geometries are finitely covered by circle bundles over the torus (see \cite{Scott}).
\end{proof}

\begin{theorem:polynomial}
Let $M$ be a closed $3$-manifold that supports a partially hyperbolic diffeomorphism. If $\pi_1(M)$ has subexponential growth, then $M$ is finitely covered by a circle bundle over the torus.
\end{theorem:polynomial}
\begin{proof}
If $M$ supports a partially hyperbolic diffeomorphism, $M$ is irreducible and $\pi_1(M)$ is infinite, since the universal cover of $M$ is homeomorphic to $\mathbb{R}^3.$ Now if $\pi_1(M)$ has subexponential growth, Proposition~\ref{proposition:polynomial} implies that $M$ is finitely covered by a circle bundle over the torus.
\end{proof}

Note that the dependence of this result on Geometrization can be removed by assuming that $\pi_1(M)$ has polynomial growth since Gromov's result in \cite{Gromov} would then imply that $\pi_1(M)$ is almost nilpotent and the desired conclusion would follow from Corollary~\ref{corollary:solvable/nilpotent}.

\section{Induced action on homology} \label{section:homology}

Theorem~\ref{theorem:nilpotent} is a consequence of the rigidity of the Seifert fibration, which implies that any homeomorphism on a nontrivial circle bundle over the torus can be isotoped to a well-understood, simple map. Since isotopy preserves the induced action on homology, in the proof of Theorem~\ref{theorem:nilpotent} we will move between the partially hyperbolic map $f$ and the simple map (denoted by $f_1$) at our convenience. First, we present a result of Waldhausen that makes all this possible.

Waldhausen proved that for $3$-manifolds that are Haken (compact, orientable, irreducible $3$-manifolds that contain orientable, incompressible surfaces besides the sphere and the disk) and Seifert fibered (circle bundles over a $2$-dimensional orbifold), every isotopy class can be represented by a fiber preserving homeomorphism.

\begin{theorem}[Waldhausen \cite{Waldhausen} and \cite{Waldhausen2}]
Let $M$ be a closed, Haken $3$-manifold and let $f$ be homeomorphism of $M.$ 
If $M$ is a Seifert fibered space, $f$ is isotopic to a fiber preserving homeomorphism unless $M$ is covered by $T^3.$
\end{theorem}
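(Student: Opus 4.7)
The plan is to reduce the theorem to the statement that Seifert fibrations on $M$ are unique up to ambient isotopy. Write $\F$ for the given Seifert fibration on $M$ and consider its image $f(\F)$ under the homeomorphism. If one can exhibit an ambient isotopy $H_t$ of $M$ with $H_0 = \mathrm{id}$ and $H_1(f(\F)) = \F$, then $g := H_1 \circ f$ is fiber-preserving and is isotopic to $f$, which is precisely the conclusion of the theorem. So the work is to isotope one Seifert fibration to the other.

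To establish the required uniqueness of Seifert fibrations, I would first identify the regular fiber algebraically. For a closed Seifert fibered $3$-manifold whose base orbifold is non-Euclidean or whose Euler number is non-zero, the regular fiber generates a canonical infinite cyclic subgroup of $\pi_1(M)$---the center, or a virtual center. Any self-homeomorphism $f$ must therefore send a regular fiber to a loop in the same free homotopy class. The analogous statement in the remaining Euclidean/zero-Euler cases holds except precisely when $M$ is finitely covered by $T^3$, where $\pi_1(T^3) = \mathbb{Z}^3$ has no distinguished primitive direction; indeed $T^3$ admits infinitely many non-isotopic circle fibrations (one for each primitive class in $H_1$), which explains the necessity of the $T^3$ exclusion in the statement.

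The second step is to upgrade this algebraic matching to an actual ambient isotopy of fibrations. Because $M$ is Haken, it admits a hierarchy along incompressible surfaces; vertical tori supply a JSJ-style decomposition, and one can show that any alternative Seifert fibration $\F'$ with the same central fiber class can be isotoped, piece by piece, into standard position with respect to $\F$. Waldhausen's rigidity theorem for Haken $3$-manifolds---a homotopy equivalence is homotopic to a homeomorphism, and homotopic homeomorphisms are isotopic---is then used to convert fiberwise homotopy data into an ambient isotopy. Exceptional fibers are handled by observing that each is characterized by its non-trivial isotropy in the base orbifold, and any homeomorphism must permute them while preserving Seifert invariants.

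The main obstacle is the second step: producing an explicit isotopy between two Seifert fibrations that already agree up to homotopy on fibers. It requires an induction along the Haken hierarchy, matching regular and exceptional fibers on each boundary torus as the manifold is cut open, and this is where the full force of Haken's hierarchy theorem together with Waldhausen's rigidity is needed. Note that the $T^3$ obstruction appears already at the first (algebraic) step, so no refinement of this argument can recover the conclusion for $T^3$ itself; indeed the mapping class group of $T^3$ is $\mathrm{GL}(3,\mathbb{Z})$, while the fiber-preserving self-homeomorphisms of any fixed Seifert structure on $T^3$ realize only a proper subgroup.
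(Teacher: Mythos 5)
The paper does not prove this statement; it is imported as a classical theorem with citations to Waldhausen, so there is no proof of record here to compare against. Evaluated on its own, your outline follows the standard route: reduce the claim to uniqueness of the Seifert fibration up to ambient isotopy, characterize the regular fiber algebraically as generating the (virtual) center of $\pi_1(M)$ when the base orbifold is hyperbolic or the Euler number is nonzero, and observe that the flat case---where $\pi_1(M)$ is virtually $\mathbb{Z}^3$ and no fiber direction is distinguished---is exactly the source of the ``covered by $T^3$'' exclusion. The reduction in your first paragraph is logically correct (if $H_1$ carries $f(\F)$ to $\F$, then $H_1 \circ f$ is fiber-preserving and isotopic to $f$), and the $T^3$ discussion correctly locates both where and why the hypothesis is needed.

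The gap is that your ``second step'' is not a step but the theorem itself. Uniqueness of Seifert fibrations on Haken manifolds requires normalizing vertical incompressible tori and annuli simultaneously with respect to both fibrations, an induction over a Haken hierarchy with bookkeeping of exceptional fibers and Seifert invariants along the cut surfaces, and a case analysis of horizontal versus vertical position for the surfaces encountered. Invoking ``Waldhausen rigidity'' and ``Haken hierarchy'' names the right tools, but the inductive normalization of one fibration against another is precisely the content of Waldhausen's second cited paper, and your proposal acknowledges rather than executes that work. There is also a loose end in the algebraic step: for non-orientable base orbifolds the fiber need not be central, only normal, so ``virtual center'' has to be made precise before the reduction goes through cleanly. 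As a map of the territory your outline is accurate and you are candid about where the difficulty lies; as a proof it is incomplete exactly there.
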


If $M$ supports a partially hyperbolic diffeomorphism and $\pi_1(M)$ is nilpotent, $M$ is a circle bundle over the torus. When $M$ is a non-trivial circle bundle over the torus with Euler class $n$, the fundamental group of $M$ is generated be the ``horizontal'' generators $x$ and $y$, where $[x,y] = t^n$, $[x,t] = [y,t] = 1$, and $t$ is the generator of the circular fiber. It then follows that $H_1(M, \mathbb{R}) = \mathbb{R} \oplus \mathbb{R}$ and is generated by the horizontal cycles $x$ and $y.$

We are now ready to show that a partially hyperbolic map induces a partially hyperbolic action on $H_1(M,\mathbb{R})$ when $\pi_1(M)$ is nilpotent.

\begin{theorem:nilpotent}
Let $M$ be a closed $3$-manifold that supports a partially hyperbolic diffeomorphism $f$. If $\pi_1(M)$ is nilpotent, the induced action $f_*$ on $H_1(M,\mathbb{R})$ is partially hyperbolic, that is, $f_*$ has eigenvalues $\lambda^u$  and $\lambda^s$ such that $\abs{\lambda^u} > 1$ and  $\abs{\lambda^s} < 1.$
\end{theorem:nilpotent}
\begin{proof}
Let $f$ be a  partially hyperbolic diffeomorphism on $M$ and assume that $\pi_1(M)$ is nilpotent. Theorem~\ref{theorem:solvable/nilpotent} implies that $M$ is a circle bundle over the torus. If $M$ is $T^3,$ the proof is simpler and will be presented later; also, this result for $T^3$ has been demonstrated in \cite{BBI2}. So assume that $M$ is a nontrivial circle bundle over the torus. The action of $f_*$ on $H_1(M,\mathbb{R})$ is partially hyperbolic if and only if the action of a power of $f_*$ on $H_1(M,\mathbb{R})$ is partially hyperbolic. So, without loss of generality, assume that $f_*$ is orientation preserving. Waldhausen's Theorem implies that $f$ is isotopic to a fiber preserving homeomorphism $f_1.$ Since $f_1$ preserves the fiber, there is an induced homeomorphism on the base torus which we denote by $q: T^2 \to T^2.$  Also, let $\F^{cs}_{\epsilon}$ be the codimension one foliation almost tangent to the lift of $E^{cs}$ on the appropriate finite cover $\overline{M}.$ Here $\epsilon$ is small enough to insure that the $\F^{cs}_{\epsilon}$ is Reebless and that the unstable foliation of $f$ (denoted by $\F^u$) lifted to $\overline{M}$ is transverse to $\F^{cs}_{\epsilon}.$

Now a lift of $f$ to $\mathbb{R}^3$, denoted by $\widetilde{f}$, is isotopic to $\widetilde{f_1}$ (the lift of $f_1$), where $\widetilde{f_1}$ is a homeomorphism of $\mathbb{R}^3$ that preserves vertical lines. If the induced action $f_*$ on $H_1(M,\mathbb{R})$ is not partially hyperbolic, then the same can be said about $f_{1*}.$ Note that the action on homology for these maps is the same as the action of $q_*$ on homology. So $q$ is isotopic to a finite order map or a power of a Dehn twist. In this situation, the vertical lines in $\mathbb{R}^3$ move apart at a linear rate under the action of $\widetilde{f_1}.$ Since $f_1$ preserves the circular fiber, points move apart in the vertical direction at a rate that is at most linear under the action of $\widetilde{f}_1.$ So the diameter of any ball under the action of $\widetilde{f_1}$ grows at a linear rate. Since the isotopy from $\widetilde{f}$ to $\widetilde{f_1}$ moves points a bounded distance, the diameter of any ball under the action of $\widetilde{f}$ grows at a rate that is at most polynomial.  This implies that the volume of balls grows at a polynomial rate ($\pi_1(M)$ has polynomial growth of degree $4$). However, the unstable manifolds grow exponentially. This disparity in the growth rates will allow us to obtain a contradiction in the following manner. First observe that when $l$ is a segment of the unstable manifold in $\widetilde{\F}^u$ (the lift of $\F^u$), $\widetilde{f}^n(l)$ cannot limit on itself. If two points on $\widetilde{f}^n(l)$ are sufficiently close, we may close up a segment in $\widetilde{f}^n(l)$ and obtain a closed curve transverse to $\widetilde{\F}^{cs}_{\epsilon}$, the lift of  $\F^{cs}_{\epsilon}.$ Now this closed curve projects to a null-homotopic closed curve on $M$ that is transverse to $\F^{cs}_{\epsilon}.$ Novikov's theorem (Theorem~\ref{theorem:Reeb}) implies the existence of a Reeb component in $\F^{cs}_{\epsilon}.$ This contradicts the fact that $\F^{cs}_{\epsilon}$ is Reebless. So there exists a fixed $\delta > 0$ such that for any $n>0$, the cylindrical tube with axis $\widetilde{f}^n(l)$ and radius $\delta$ does not intersect itself. Now the volumes of these cylinders grow exponentially as $n \to \infty$ since the volume of any such cylinder is a constant times the length of $\widetilde{f}^n(l)$ and the length of $\widetilde{f}^n(l)$ grows exponentially as $n \to \infty.$ However, these cylinders are contained in balls with volumes that grow at a polynomial rate. For $n$ sufficiently large, we obtain a contradiction. This implies that the induced action $f_*$ on  $H_1(M,\mathbb{R})$ must be partially hyperbolic.

Now if $M$ is $T^3$ and if the induced action $f_*$ on $H_1(M,\mathbb{R})$ is not partially hyperbolic, then $f$ is isotopic a linear map $f_1$ such that the diameter of balls in $\mathbb{R}^3$ grows at a polynomial rate  under the action of $\widetilde{f}_1.$ Again, the diameter of balls under the action of $\widetilde{f}$ grow at a polynomial rate also and the same arguments above apply.  So in all cases, the induced action $f_*$ on $H_1(M,\mathbb{R})$ is partially hyperbolic.
\end{proof}

\begin{remark}
The proof above does not work when $\pi_1(M)$ is solvable. This is because if $\pi_1(M)$ is (virtually) nilpotent, $\pi_1(M)$ has polynomial growth (see \cite{Gromov}), and then, since the universal cover is quasi-isometric to the fundamental group, the universal cover has polynomial volume growth. However, when $\pi_1(M)$ is solvable, the fundamental group may have exponential growth. This is exactly what happens for the Sol manifolds that support Anosov flows, and hence, support dynamically coherent partially hyperbolic diffeomorphisms. For these partially hyperbolic diffeomorphisms (the time-one maps of Anosov flows), the volume of balls in the universal cover grows at an exponential rate and we do not get the contradiction that we obtain above for manifolds with nilpotent fundamental groups.
\end{remark}

\begin{remark}
The arguments in the proof of Theorem~\ref{theorem:nilpotent} can be used to show that there are no partially hyperbolic diffeomorphisms on non-trivial circle bundles over the Klein bottle.  Let $M$ be an orientable non-trivial circle bundle over the Klein bottle and suppose that $f : M \to M$ is partially hyperbolic.  Since the mapping class group of $M$ is finite (see \cite{McCullough}), after passing to a power of $f$, we can assume that $f$ is isotopic to the identity. Now the diameter of balls in the universal cover of $M$ grow at a polynomial rate under the action of $\widetilde{f}$ since $\pi_1(M)$ is almost nilpotent. However, the arguments above demonstrate that partial hyperbolicity of $f$ is accompanied by exponential growth of the diameter of balls in the universal cover under the action of $\widetilde{f}.$ So there are no partially hyperbolic diffeomorphisms on $M.$
\end{remark}

The example below (from \cite{BoWi}) shows that Theorem~\ref{theorem:nilpotent} need not hold when $\pi_1(M)$ is almost nilpotent.

\begin{example}
Define $\overline{f}: T^3 \to T^3$ as $\bar{f}(v,z)=(Av,z)$, where $v \in T^2$, $z \in S^1$, and $A$ is the induced map on $T^2$ by a hyperbolic matrix in $\textrm{SL}_2(\mathbb{Z}).$ This dynamically coherent partially hyperbolic diffeomorphism commutes with $t: T^3 \to T^3$ defined as $t(v,z) = (-v,z+0.5).$ Since $t$ acts freely on $T^3$, we obtain a dynamically coherent partially hyperbolic diffeomorphism $f$ on $T^3 / t.$

The fundamental group of $T^3 / t$ is almost nilpotent since $T^3$ is its double cover; in fact, $\pi_1(T^3 / t)$ is almost abelian. Also, $H_1(T^3/t ,\mathbb{Z}) = \mathbb{Z} \oplus \mathbb{Z}_2 \oplus \mathbb{Z}_2$, and therefore, $H_1(T^3/t,\mathbb{R}) = \mathbb{R}.$ So the induced action $f_*$ on $H_1(T^3/t,\mathbb{R})$ cannot be partially hyperbolic.
\end{example}

\section{Dynamical coherence} \label{section:dc3}

The arguments in this section are inspired by \cite{BBI2} where the authors show that any strong partially hyperbolic diffeomorphism on $T^3$ is dynamically coherent. We establish dynamical coherence for all strong partially hyperbolic diffeomorphisms on closed $3$-manifolds that have nilpotent fundamental groups.

\subsection{Homological center bunching}

Assume that $f$ is a strong partially hyperbolic diffeomorphism on a closed $3$-manifold $M$ and $\pi_1(M)$ is nilpotent.  So $M$ is a circle bundle over the torus and the induced action of $f_*$ on $H_1(M, \mathbb{R})$ is partially hyperbolic. Let $\lambda^s$, $\lambda^c$, and $\lambda^u$ be the stable, center, and unstable eigenvalues of $f_*.$ If $H_1(M, \mathbb{R})$ is two dimensional, let $\lambda^c = \pm 1$, depending on whether $f$ is isotopic to a fiber preserving map that preserves or reverses the orientation on the fiber. 

\begin{definition}
Let $M$ be a circle bundle over the torus ($T^2$) that supports a strong partially hyperbolic diffeomorphism $f.$  In particular, there exists constants $\nu$, $\hat{\nu}$, $\gamma$, and $\hat{\gamma}$ that satisfy the inequalities in Definition~\ref{definition:phype}. The strong partially hyperbolic diffeomorphism $f$ satisfies the \textit{homological center bunching} condition if 
\[
\abs{\lambda^s}  \leq \nu <  \gamma \leq \abs{\lambda^c}  \leq  \hat{\gamma}^{-1}< \hat{\nu}^{-1} \leq \abs{\lambda^u},
\]
where $\lambda^s$, $\lambda^c$, and $\lambda^u$ are the stable, center, and unstable eigenvalues of $f_*.$
\end{definition}

We will not attempt to recreate the argument on $T^3$ in \cite{BBI2}, especially since the proof is simpler when all the eigenvalues are real and $\abs{\lambda^c} = 1.$ This is exactly the situation when $M$ is a non-trivial circle bundle over the torus, and so, assume that this is the case.  After passing to a power of $f$, we can assume that $f$ is orientation preserving and all the eigenvalues are positive. These assumptions hold throughout this section unless stated otherwise.

When $M$ is a non-trivial circle bundle over the torus, $f$ is isotopic to a fiber preserving homeomorphism and the induced map on the base torus $q: T^2 \to T^2$ is isotopic to a linear Anosov diffeomorphism. So we can isotope $f$ to a fiber preserving map $F$ such that the induced homeomorphism by $F$ on the base is the linear Anosov diffeomorphism $q_*.$ Let $F'$ be a partially hyperbolic diffeomorphism on $M$ just like the one constructed in Example~\ref{example:skew}, with the induced action on the base torus equal to $q_*$ and the identity along the circular fibers. Now $F$ is perhaps not isotopic to $F'$, but $F$ preserves the center circular fibers associated with $F'.$ In fact the action of $F$ on the circular fibers is the same as the action of $F'.$ This implies that the lift $\widetilde{F}$ to the universal cover has the same action on the center leaves of $\widetilde{F}'$, the lift of $F'$ to the universal cover. Also, the action of $\widetilde{F}$ on the lifts of the center-stable and the center-unstable leaves associated with $\widetilde{F}'$ is the same as the action of $\widetilde{F}'.$

Let $\F^{cs}_{\epsilon}$ and $\F^{cu}_{\epsilon}$ be the foliations on the appropriate finite cover given by Theorem~\ref{theorem:epsilon}, with $\epsilon$ sufficiently small so that these foliations are Reebless, the lift of $\F^u$ (the unstable foliation of $f$) to this finite cover is transverse to $\F^{cs}_{\epsilon}$, and the lift of $\F^s$ (the stable foliation of $f$) is transverse to $\F^{cu}_{\epsilon}.$ Let $\F^{cs}_F$, $\F^{cu}_F$, $\F^u_F$, $\F^s_F$, and $\F^c_F$ be the foliations associated with $F$ (these foliations are actually associated with $F'$). We will use tildes to denote the lifts of all these objects to the universal cover of $M.$ In particular, $\widetilde{F}$ is the lift of $F$ isotopic to the lift $\widetilde{f}$ of $f.$

We will now establish homological center bunching under the conditions described above---$M$ is a non-trivial circle bundle over the torus and all the eigenvalues of $f_*$ are positive. This is essentially the only case we are interested in since dynamical coherence for strong partially hyperbolic diffeomorphisms on $T^3$ has been demonstrated in \cite{BBI2}.

\begin{lemma} \label{lemma:half}
Let $f$ and $M$ be as described above. The diffeomorphism $f$ satisfies the homological center bunching condition.
\end{lemma}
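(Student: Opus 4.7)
My plan is to pin down $E^c$ as the vertical fiber direction, from which both center inequalities follow by an elementary integration, and then to compare the horizontal expansion of an unstable arc with the large-scale action on the base. For the identification, since $f_{\ast}(t)=t$ (orientation-preserving on the fibers), the lift $\widetilde{f}$ commutes with the central deck translation $T_t$ and so permutes its orbits, which are the vertical lines. The restriction of $\widetilde{f}$ to a vertical line is a $C^1$ map of $\mathbb{R}$ commuting with unit translation---the lift of a degree-one circle homeomorphism---so it preserves the length of each period. Iterating, $\widetilde{f}^n$ preserves the Riemannian length of every vertical segment. If a unit vertical vector $v=v^s+v^c+v^u$ had $v^u\neq 0$ on a positive-measure set along some vertical segment, the bound $\p Df^n(v^u)\p \geq\hat{\nu}^{-n}\p v^u\p $ would force the image length to grow like $\hat{\nu}^{-n}$, contradicting preservation; by symmetry (applied to $\widetilde{f}^{-1}$) $v^s\equiv 0$. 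Hence the vertical direction coincides with $E^c$.

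The center bunching then follows by integration: along a unit vertical segment, $\int\p Df(v)\p\,ds$ lies pointwise strictly in $(\gamma,\hat{\gamma}^{-1})$, but equals the preserved length, yielding $\gamma\leq 1=\abs{\lambda^c}\leq\hat{\gamma}^{-1}$. For $\abs{\lambda^u}\geq\hat{\nu}^{-1}$: since $E^c$ is vertical, $E^u$ is horizontal, so any unstable arc in the universal cover projects isometrically to the base $\mathbb{R}^2$. A unit unstable arc $\tilde{l}_0$ iterates to $\tilde{l}_n=\widetilde{f}^n(\tilde{l}_0)$, still horizontal of length $\geq\hat{\nu}^{-n}$, with base projection of the same length. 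But $\widetilde{f}$ descends to the lift $\widetilde{q}$ of the base action $q$, so this projection equals $\widetilde{q}^n(\pi(\tilde{l}_0))$, the $n$-th iterate of an $O(1)$-length curve. Because $q$ is in the isotopy class of the linear Anosov $q_\ast$ with top eigenvalue $\lambda^u$, the length of $\widetilde{q}^n$-images of bounded curves should be at most $C\abs{\lambda^u}^n$; combining gives $\hat{\nu}^{-n}\leq C\abs{\lambda^u}^n$ and hence $\abs{\lambda^u}\geq\hat{\nu}^{-1}$ in the limit $n\to\infty$. The stable estimate $\abs{\lambda^s}\leq\nu$ follows by the same argument applied to $f^{-1}$.

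The main obstacle is this last comparison, namely bounding $\mathrm{length}(\widetilde{q}^n(c))\leq C\abs{\lambda^u}^n$ for short curves $c$: the topological conjugacy to $q_\ast$ is only continuous, not Lipschitz, so the estimate is not automatic. The cleanest route is to observe that the base projection of $E^u$ is a continuous $q$-invariant line field on $T^2$, which by a rigidity argument must coincide with the unstable foliation of $q_\ast$; the length bound then reduces to the linear model at large scales.
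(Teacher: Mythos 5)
There is a genuine gap at the very first step, and it invalidates most of what follows. You argue that because $f_{\ast}(t)=t$, the lift $\widetilde{f}$ commutes with the deck translation $T_t$ and hence ``permutes its orbits, which are the vertical lines.'' But $T_t$ is a discrete deck transformation, not a one-parameter flow: its orbits are countable sets $\{T_t^n(x)\}_{n\in\mathbb{Z}}$, not vertical lines. Commuting with $T_t$ tells you nothing about preserving the vertical foliation. The partially hyperbolic map $f$ is only \emph{isotopic} to a fiber-preserving homeomorphism $f_1$; it does not itself preserve the circle fibers, and $\widetilde{f}$ does not descend to a map of the base. So the claim that $E^c$ coincides with the vertical direction is unsupported, and in fact would already imply dynamical coherence (with the Seifert fibers as center leaves), which is essentially what the whole section is still trying to establish --- assuming it here begs the question. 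With the identification of $E^c$ gone, both the integration argument for $\gamma\leq 1\leq\hat{\gamma}^{-1}$ and the ``isometric projection to the base'' step for $\hat{\nu}^{-1}\leq\abs{\lambda^u}$ collapse. You also flag the remaining gap (bounding lengths under the merely topological conjugacy of $q$ to $q_*$) and propose a rigidity argument, but even that would sit on the faulty foundation.

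The paper's actual proof avoids identifying $E^c$ with anything geometric. The inner inequalities $\gamma\leq 1\leq\hat{\gamma}^{-1}$ come from the observation that if $\gamma>1$ or $\hat{\gamma}^{-1}<1$ then $f$ would be Anosov, forcing $M=T^3$, a contradiction. The outer inequalities come from a volume-growth argument: the isotopy from $\widetilde{f}$ to the fiber-preserving $\widetilde{F}$ moves points a bounded distance, so a tube around $\widetilde{f}^n(l)$ lies in a region of volume at most $kn^3(\lambda^u)^n$, while the absence of Reeb components in $\F^{cs}_\epsilon$ forces that tube to be embedded with volume at least a constant times $(\hat{\nu}^{-1})^n$; if $\hat{\nu}^{-1}>\lambda^u$ this gives a contradiction for large $n$. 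This argument never needs $E^c$ to line up with the fibers, only a coarse comparison of exponential rates after isotoping to the reference map $F$.
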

\begin{proof}
When $M$ is a non-trivial circle bundle over the torus, $\lambda^c = 1.$  If $\gamma >1$ or $\hat{\gamma}^{-1} < 1$, then $f$ is Anosov, which implies that $M$ must be $T^3.$ Since this is not the case, we have $\gamma \leq \lambda^c \leq \hat{\gamma}^{-1}.$ 

For the map $\widetilde{F}$, points move apart at the rate of $\lambda^u$ in the unstable direction for $\widetilde{F}.$ Also, points move apart  in the center-stable directions at a rate that is at most linear under the action of $\widetilde{F}.$  Now consider a tube (cylinder) with the disks parallel to the leaves in $\widetilde{\F}^{cs}_F$ and height along the unstable direction of $\widetilde{F}.$ Since $\widetilde{f}$ is isotopic to $\widetilde{F}$, the volume of this tube containing a short segment $l$ of the unstable manifold of $\widetilde{f}$ under the action of $\widetilde{f}^n$ is bounded above by $k n^3 (\lambda^u)^n$, for some constant $k>0$; we use $k n^3 (\lambda^u)^n$ instead of $k n^2 (\lambda^u)^n$ since areas can grow cubically on Nil manifolds (see page 49 of \cite{Weinberger} for instance). Now recall that $\widetilde{f}^n(l)$ is not permitted to limit on itself. If two points on $\widetilde{f}^n(l)$ are sufficiently close, we may close up a segment in $\widetilde{f}^n(l)$ and obtain a closed curve transverse to $\widetilde{\F}^{cs}_{\epsilon}.$ This closed curve projects to a null-homotopic closed curve on $M$ that is transverse to $\F^{cs}_{\epsilon}$ and this is impossible since $\F^{cs}_{\epsilon}$ contains no Reeb components. So there exists a fixed $\delta >0$ such that for each $n>0$, there exists a cylindrical tube with axis $\widetilde{f}^n(l)$ and radius $\delta$ that fits inside a cylinder with volume bounded above by $k n^3 (\lambda^u)^n.$ Note that the tube with axis $\widetilde{f}^n(l)$ and radius $\delta$ does not intersect itself and its volume is at least a constant times $(\hat{\nu}^{-1})^n.$ If $\hat{\nu}^{-1} > \lambda^u$, we obtain a contradiction for $n$ sufficiently large.  So $\hat{\gamma}^{-1} < \hat{\nu}^{-1} \leq \lambda^u.$ A completely symmetric argument provides the other inequality $\lambda^s \leq \nu <  \gamma.$ These inequalities establish homological center bunching for $f.$
\end{proof}

\subsection{Quasi-isometry of $\widetilde{\F}^u$ and $\widetilde{\F}^s$}

\begin{definition}
A foliation $\widetilde{\cal{Q}}$ of a simply connected Riemannian manifold $\widetilde{M}$ is quasi-isometric if there are positive constants $a$ and $b$ such that for any two points $x$ and $y$ which lie in the same leaf of $\widetilde{\cal{Q}}$,
\[
d_{\widetilde{\cal{Q}}}(x,y) \leq a \cdot d(x,y) + b.
\] 
\end{definition}

In the present context, $M$ is a $3$-manifold with a partially hyperbolic diffeomorphism and $\widetilde{M}$ is its universal cover.  Also, the metric on $\widetilde{M}$ is the lift of the metric on $M$ in Definition~\ref{definition:phype}.
We will establish that the lifts of unstable and stable foliations are quasi-isometric in the universal cover in Theorem~\ref{theorem:quasi-isometric} below, and then the results in \cite{Brin} will imply that the partially hyperbolic diffeomorphism is dynamically coherent.  First, we prove an important lemma.

\begin{lemma}
Every leaf in $\widetilde{\F}^{cs}_\epsilon$ lies within a  bounded distance of any leaf in $\widetilde{\F}^{cs}_F$ that it intersects.\end{lemma}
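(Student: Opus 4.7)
The plan is to set up coordinates in the universal cover where the model foliation $\widetilde{\F}^{cs}_F$ is explicit, and then derive a contradiction from the Reebless property of $\F^{cs}_\epsilon$ (or, equivalently, from polynomial volume growth in $\widetilde M$) if some $L_\epsilon$ wanders arbitrarily far from the leaf $L_F$ it meets.

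First I would make the geometry of $\widetilde{\F}^{cs}_F$ fully explicit. Let $\pi \colon \widetilde M \to \mathbb{R}^2$ be the lift of the Seifert projection, so that the lifted base map $\widetilde{q_*}$ is a linear hyperbolic automorphism of $\mathbb{R}^2$ with stable direction $v^s$ and unstable direction $v^u$. By construction of $F'$ (and hence of $F$ on center-stable leaves), each leaf of $\widetilde{\F}^{cs}_F$ is exactly $\pi^{-1}(\ell)$ for an affine line $\ell \parallel v^s$; such a leaf is a topological plane that is saturated by vertical fibers. Because $\widetilde M$ has nilpotent geometry with the vertical fiber as its center, the Riemannian distance in $\widetilde M$ from a point $q$ to $L_F=\pi^{-1}(\ell)$ is comparable (up to a universal multiplicative constant) to the Euclidean distance $d_{\mathbb R^2}(\pi(q),\ell)$. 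So it suffices to show that $\pi(L_\epsilon)$ lies in a bounded $v^u$-strip around $\pi(L_F)$.

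Next, suppose for contradiction that there is a sequence $q_n\in L_\epsilon$ with $d_{\mathbb R^2}(\pi(q_n),\pi(L_F))\to\infty$. By Lemma~\ref{lemma:no_spheres} plus Theorem~\ref{theorem:Novikov}, $L_\epsilon$ is a topological plane; joining the given intersection point $p$ to $q_n$ by an arc in $L_\epsilon$ and projecting, we see that $L_\epsilon$ crosses arbitrarily many leaves of $\widetilde{\F}^{cs}_F$. Pick a point $z_n\in L_\epsilon$ with $d(z_n,L_F)$ large and take a short unstable segment $l_n\subset \widetilde{\F}^u$ through $z_n$; transversality of $\widetilde{\F}^u$ to $\widetilde{\F}^{cs}_\epsilon$ together with the fact that $\widetilde F$ preserves $\widetilde{\F}^{cs}_F$ and expands transversely to it by $\lambda^u$ lets one compare forward iterates $\widetilde f^m(l_n)$ with the $\widetilde F$-iterates of a nearby model unstable ray, using that $d_{C^0}(\widetilde f,\widetilde F)$ is bounded. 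From the iteration one obtains that $\widetilde f^m(l_n)$ must return exponentially often to a fundamental domain of a leaf of $\widetilde{\F}^{cs}_F$ transverse to $\widetilde{\F}^{cs}_\epsilon$; closing up a short arc between two such close returns produces a closed curve transverse to $\widetilde{\F}^{cs}_\epsilon$ whose projection to $M$ is null-homotopic, contradicting Theorem~\ref{theorem:Reeb} since $\F^{cs}_\epsilon$ is Reebless. (As an alternative endgame, one can instead fatten $\widetilde f^m(l_n)$ to a $\delta$-tube as in Lemma~\ref{lemma:half} and get an exponential volume inside a polynomial-volume ball.)

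The main obstacle is that $\widetilde{\F}^{cs}_\epsilon$ is not $\widetilde f$-invariant, so one cannot naively iterate its leaves. I would handle this by working with the Burago--Ivanov map $h_\epsilon$, which is $C^0$-close to the identity and sends leaves of $\F^{cs}_\epsilon$ to surfaces tangent to the $Df$-invariant distribution $E^{cs}$; replacing $L_\epsilon$ by $h_\epsilon(L_\epsilon)$ gives a surface that can be iterated by $\widetilde f^m$ while staying tangent to $E^{cs}$, and only at the last moment does one pay a uniformly bounded $C^0$-cost to transfer the conclusion back to $L_\epsilon$. This is the technically delicate step, but everything else is a direct geometric/dynamical comparison between $\widetilde f$ and the rigid model $\widetilde F$ whose center-stable foliation is $\widetilde{\F}^{cs}_F$.
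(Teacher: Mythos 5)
Your setup of the model geometry and the $h_\epsilon$-reduction are fine and match the paper, but the core dynamical argument you propose does not prove the lemma, and the endgame you reach for is the wrong one.

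The difficulty is this: you take a short \emph{unstable} segment $l_n\subset\widetilde{\F}^u$ through a far-away point $z_n\in L_\epsilon$ and iterate it forward, hoping to force a Reebless or volume contradiction. But notice that nothing in that argument actually uses the hypothesis that $z_n$ is far from $L_F$ --- your iteration scheme would produce the same ``contradiction'' for \emph{every} unstable segment, which cannot be right. In fact the Reebless property guarantees precisely that $\widetilde f^m(l_n)$ does \emph{not} return near itself (that is how the paper produces the non-self-intersecting $\delta$-tube in Lemma~\ref{lemma:half}), so closing up a short arc between ``close returns'' is not available. And the volume endgame compares $\hat\nu^{-1}$ (the expansion rate of unstable segments) against $\lambda^u$, which by homological center bunching satisfies $\hat\nu^{-1}\le\lambda^u$; that is an inequality, not a contradiction, and it is exactly what Lemma~\ref{lemma:half} already establishes. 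Iterating unstable arcs therefore yields nothing new.

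The paper's proof uses a different curve and a different inequality. After applying $h_\epsilon$, it takes a long arc $l$ lying \emph{in the center-stable surface itself} (tangent to $E^{cs}$) and spread out in the $\widetilde F$-unstable direction. Two endpoints of $l$ that are far apart transverse to $\widetilde{\F}^{cs}_F$ separate at rate $(\lambda^u)^n$ under $\widetilde F$, hence also under $\widetilde f$ (bounded $C^0$-distance between the two lifts). But those same two endpoints lie on a curve tangent to $E^{cs}$, so under $\widetilde f$ they can separate at rate at most a constant times $(\hat\gamma^{-1})^n$. The contradiction then comes from the strict inequality $\hat\gamma^{-1}<\lambda^u$ of Lemma~\ref{lemma:half}. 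Your proposal never invokes homological center bunching and never compares the center-stable expansion rate $\hat\gamma^{-1}$ with $\lambda^u$, which is the missing idea: it is the center-stable curve, not an unstable segment, that must be iterated, and the gap between $\hat\gamma^{-1}$ and $\lambda^u$, not Reeblessness, that closes the argument.
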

\begin{proof}
Let $\widetilde{L}$ be a leaf in $\widetilde{\F}^{cs}_\epsilon$ that does not stay within a bounded distance of some leaf in $\widetilde{\F}^{cs}_F$ that it intersects. Now, there exists a surface $\widetilde{L}^{cs}$ tangent to $\widetilde{E}^{cs}$ that has the same property (since there exists a continuous map $h_\epsilon:\overline{M} \to \overline{M}$ such that $\textrm{dist}_{C^0}(h_\epsilon, id_M) < \epsilon$ and $h_\epsilon$ maps leaves of ${\F}^{cs}_\epsilon$ to surfaces tangent to $E^{cs}$). So one can find a sufficiently long center-stable curve $l$ in the universal cover that does not stay within any given distance of a leaf in $\widetilde{\F}^{cs}_F$ that it intersects. Two points sufficiently far away in the unstable direction of $\widetilde{F}$ on $l$ move apart at the rate of a constant times $(\lambda^u)^n$ under the iteration of $\widetilde{F}$, and these points must move apart at the rate of a constant times $(\lambda^u)^n$ under the iteration of $\widetilde{f}$ also.  However, these points move apart at the rate that is at most a constant times $(\hat{\gamma}^{-1})^n$ under the iteration of $\widetilde{f}.$ Since $\hat{\gamma}^{-1} < \lambda^u$ due to homological center bunching, we obtain a contradiction. So $\widetilde{L}$ stays within a bounded distance of any leaf in $\widetilde{\F}^{cs}_F$ that it intersects.
\end{proof}

A similar argument shows that every leaf in $\widetilde{\F}^{cu}_\epsilon$ lies within a  bounded distance of a leaf in $\widetilde{\F}^{cu}_F$ that it intersects.

\begin{theorem} \label{theorem:quasi-isometric}
When $M$ is a non-trivial circle bundle over the torus, the lifts of the unstable and stable foliations are quasi-isometric in the universal cover.
\end{theorem}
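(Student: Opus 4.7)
The plan is to establish quasi-isometry of $\widetilde{\F}^u$; the statement for $\widetilde{\F}^s$ then follows by applying the same argument to $\widetilde{f}^{-1}$ (whose homological center bunching interchanges the roles of $E^s$ and $E^u$). Work throughout in the universal cover $\widetilde{M}$, which is the Heisenberg group since $M$ is a nontrivial circle bundle over the torus, with the lifted Riemannian metric. Identify the model foliations $\widetilde{\F}^u_F$, $\widetilde{\F}^{cs}_F$, $\widetilde{\F}^{cu}_F$, $\widetilde{\F}^c_F$ with the coset foliations of the corresponding subgroups; $\widetilde{\F}^u_F$ then consists of straight horizontal lines parallel to the unstable eigendirection of the base Anosov and is trivially quasi-isometric. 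Define $\sigma^u : \widetilde{M} \to \mathbb{R}$ to be the projection to that unstable horizontal coordinate; it is $1$-Lipschitz, constant on $\widetilde{\F}^{cs}_F$-leaves, and satisfies $\sigma^u \circ \widetilde{F} = \lambda^u \sigma^u$. By the preceding lemma (and its symmetric counterpart for $\widetilde{\F}^{cu}_\epsilon$), every $\widetilde{\F}^{cs}_\epsilon$-leaf lies within a uniform distance $D$ of any $\widetilde{\F}^{cs}_F$-leaf it meets.

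The key step is to show that there exist constants $c>0$ and $C'$ such that for every unstable arc $\gamma$ of $\widetilde{f}$ with endpoints $x, y$ and arclength $\ell$,
\[
|\sigma^u(x) - \sigma^u(y)| \;\geq\; c\ell - C'.
\]
Granting this, the conclusion is immediate: the $1$-Lipschitz property of $\sigma^u$ gives $\ell \leq c^{-1}(d(x,y) + C')$. To prove the inequality, suppose for contradiction that it fails. Then there is a sequence of unstable arcs $\gamma_n$ with $\ell_n \to \infty$ and $|\sigma^u(x_n) - \sigma^u(y_n)|/\ell_n \to 0$; after applying deck translations, arrange that the $x_n$ lie in a fixed compact subset of $\widetilde{M}$.

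Now iterate $\widetilde{f}^{m_n}$ forward with $m_n \to \infty$. Because $\widetilde{f}$ is at uniformly bounded distance from the Lie automorphism $\widetilde{F}$, a short induction gives $\sigma^u(\widetilde{f}^m(p)) = (\lambda^u)^m \sigma^u(p) + O((\lambda^u)^m)$, while the unstable arclength of $\widetilde{f}^m(\gamma_n)$ grows by a factor at least $(\hat{\nu}^{-1})^m$. Homological center bunching $\lambda^u \geq \hat{\nu}^{-1}$ then allows one to choose $m_n$ so that the iterated arcs have arclengths tending to infinity yet remain inside a slab $\{|\sigma^u - a_n| \leq B\}$ of bounded width. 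Each iterated arc is transverse to $\widetilde{\F}^{cs}_\epsilon$ and meets every such leaf at most once, by the no-Reeb-component argument used in the proof of Theorem~\ref{theorem:nilpotent}; hence its embedded $\delta$-tube has volume at least $c' \ell_n^{(m_n)}$. On the other hand, by the preceding lemma the slab lies in a uniform neighborhood of boundedly many $\widetilde{\F}^{cs}_F$-leaves, and the intersection of such a region with any ball in the Heisenberg group has only polynomial volume growth. Comparing the exponential tube volume against this polynomial bound produces the desired contradiction.

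The step I expect to be the main obstacle is the final volume comparison in the borderline case $\lambda^u = \hat{\nu}^{-1}$ of homological center bunching, where the exponential tube growth is barely faster than the iteration error terms. Handling this requires carefully accounting for the boundedly-many $\widetilde{\F}^{cs}_\epsilon$-leaves sitting within distance $D$ of a given $\widetilde{\F}^{cs}_F$-leaf and upgrading the single-crossing property for $\widetilde{\F}^{cs}_\epsilon$ into a quantitative bound on the spatial extent of the iterated arc inside the slab.
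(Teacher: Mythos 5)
Your overall strategy---localize long unstable arcs in a region whose volume grows only polynomially and contradict the exponential volume of an embedded $\delta$-tube around the arc---is the same one the paper uses, and your use of the preceding lemma to compare $\widetilde{\F}^{cs}_\epsilon$-leaves with the affine planes $\widetilde{\F}^{cs}_F$ is also parallel. But the way you set up the contradiction has two real gaps, one of which you flag and one of which you do not.

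The gap you do not flag is the claim that one can choose $m_n\to\infty$ so that $\widetilde f^{m_n}(\gamma_n)$ lies in a slab $\{|\sigma^u-a_n|\le B\}$ of bounded width. Your own estimate is $\sigma^u(\widetilde f^m(p))=(\lambda^u)^m\sigma^u(p)+O((\lambda^u)^m)$, with the error term coming from summing a geometric series with ratio $\lambda^u>1$; consequently the $\sigma^u$-spread of $\widetilde f^{m_n}(\gamma_n)$ is of order $(\lambda^u)^{m_n}\bigl(\,|\sigma^u(x_n)-\sigma^u(y_n)|+O(1)\bigr)$, which tends to infinity as soon as $m_n\to\infty$. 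Homological center bunching gives you that the \emph{ratio} of $\sigma^u$-spread to arclength tends to $0$, but not that the spread itself stays bounded; nor can you fix this by keeping $m_n$ bounded, since $|\sigma^u(x_n)-\sigma^u(y_n)|=\epsilon_n\ell_n$ is itself not assumed bounded, and since $\sigma^u$ restricted to $\gamma_n$ need not be monotone you cannot even control the spread of the interior of the arc by its endpoints. The gap you do flag---bounding the arc's extent in the remaining two directions so that the volume comparison can be run inside a genuinely compact region---is the crux, and the ``quantitative single-crossing'' upgrade you hope for is not spelled out.

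The paper sidesteps both difficulties with a cleaner reduction. Instead of starting from long arcs and iterating forward, it negates the statement at the level of fixed covering translates: it fixes one $\widetilde L\in\widetilde{\F}^{cs}_\epsilon$ with its translates $T^m(\widetilde L)$ and shows that some length $K$ forces every $K$-long unstable segment to cross one of them. If this fails, one can pull back a long segment lying in a slab by $\widetilde f^{-n}$ and truncate to get a unit-length unstable segment $J_n$ with $\widetilde f^n(J_n)$ lying between two translates; the slab bound is then built into the setup rather than something to be controlled through iteration errors. The extent of $\widetilde f^n(J_n)$ in the stable direction of $F$ is bounded by the backward-iteration trick (if it were not, backward iterates of $F$, hence of $f$, would stretch the segment, contradicting that unstable segments shrink under $\widetilde f^{-1}$), and the extent in the center direction is bounded by $kn$ since $\widetilde f$ is boundedly close to the fiber-preserving $\widetilde F$. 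This yields a ball of volume $O(n^4)$, against the tube volume $\ge c(\hat\nu^{-1})^n$, and the claim follows; quasi-isometry then drops out by counting distinct translates crossed by a length-$nK$ arc. If you want to keep your $\sigma^u$ formulation, you would need to import both of these devices: replace forward iteration of long arcs by the pullback-and-truncate reduction to unit-length segments, and add the center-unstable backward-iteration argument to bound the second horizontal direction.
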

\begin{proof}
Let $T$ be a ``horizontal'' covering translation that corresponds to a ``horizontal'' generator of $\pi_1(M).$ A ``horizontal'' generator on $M$ is any generator not in the direction of the center. Let $\widetilde{L} \in \widetilde{\F}^{cs}_\epsilon$ be a leaf that intersects the leaf $\widetilde{L}_F$ in $\widetilde{\F}^{cs}_F.$  The previous lemma implies that $\widetilde{L}$ lies within a bounded distance $D$ of $\widetilde{L}_F$, and so, $T^n(\widetilde{L})$ is within distance $D$ of $T^n(\widetilde{L}_F)$ for all $n.$

We will first show that there exists a $K>0$ so that every segment of length $K$ in $\widetilde{\F}^u$ intersects at least one of the leaves $T^n(\widetilde{L}).$ If this is not the case, for each $n>0$, there exists a curve $J_n$ in $\widetilde{\F}^u$ of length $1$ such that $\widetilde{f}^n(J_n)$ lies in between two covering translates of $\widetilde{L}.$ In particular, $\widetilde{f}^n(J_n)$ lie within a uniformly bounded distance from the lifts of these center-stable leaves for the map $F$, since the translates of $\widetilde{L}$ lie within a fixed bounded distance from the translates of $\widetilde{L}_F.$  Furthermore, for all $i >0$, $\widetilde{f}^i(J_n)$ also lie within a uniformly bounded distance from the center-unstable leaves for the map $F.$ If this is not so, these curves would grow in length under backward iteration of $F$, and hence, would grow under the backward iteration of $f$, which is a contradiction to the fact that the length of these curves decreases under backward iteration of $f.$ So $\widetilde{f}^n(J_n)$ lie within a uniformly bounded distance from the lifts of the center leaves in the foliation $\widetilde{\F}^c_F.$  This implies that the diameter of $\widetilde{f}^n(J_n)$ is bounded in the stable and unstable directions of $F.$ The diameter of $\widetilde{f}^n(J_n)$ in the center direction of $F$ is bounded above by $kn$, for some constant $k>0.$ So $\widetilde{f}^n(J_n)$ is contained in a ball of volume at most a constant times $n^4$ ($\pi_1(M)$ has quartic growth).  Again, since there are no Reeb components in $\F^{cs}_{\epsilon}$, there exists a properly imbedded cylindrical tube with axis $\widetilde{f}^n(J_n)$ and radius $\delta$ that fits inside this ball of volume at most a constant times $n^4.$ The length of $\widetilde{f}^n(J_n)$ is at least $(\hat{\nu}^{-1})^n$, and so, the volume of the cylindrical tube is at least a constant times $(\hat{\nu}^{-1})^n.$ For $n$ sufficiently large, we obtain a contradiction. So our claim holds.

A curve in $\widetilde{\F}^u$ cannot intersect a leaf in $\widetilde{\F}^{cs}_\epsilon$ twice because of the absence of Reeb components in ${\F}^{cs}_\epsilon.$ So a curve of length $nK$ in $\widetilde{\F}^u$ must intersect $n$ different translates of $\widetilde{L}.$ This shows that the distance between its endpoints is at least a constant times $(n-1)$, which establishes the quasi-isometry of  $\widetilde{\F}^u$ in the universal cover.  
The proof of the fact that  $\widetilde{\F}^s$ is quasi-isometric in the universal cover is similar. 
\end{proof}

\begin{remark}
We actually show that the lifted stable and unstable foliations are quasi-isometric in the universal cover for the Euclidean metric since distances along the horizontal generators are comparable to Euclidean distances when $M$ is a Nil manifold (see page 19 of \cite{coarse}); distances along the vertical direction are comparable to the square root of Euclidean distance on Nil manifolds.
\end{remark}

Note that the authors in \cite{BBI2} show that $\widetilde{\F}^s$ and $\widetilde{\F}^u$ are quasi-isometric in the universal cover when $f$ is a strong partially hyperbolic diffeomorphism on $T^3$---their argument is more complicated. Brin in \cite{Brin} has shown that quasi-isometry of the lifted stable and unstable foliations is sufficient to establish dynamical coherence. These results and Theorem~\ref{theorem:quasi-isometric} imply the following theorems.

\begin{theorem:quasi-isometry}
Let $f$ be a strong partially hyperbolic diffeomorphism on a closed $3$-manifold $M.$ If $\pi_1(M)$ is nilpotent, the lifts of the unstable and stable foliations are quasi-isometric in the universal cover.
\end{theorem:quasi-isometry}

\begin{theorem:dc3}
Let $f$ be a strong partially hyperbolic diffeomorphism on a closed $3$-manifold $M.$ If $\pi_1(M)$ is nilpotent, $f$ is dynamically coherent.
\end{theorem:dc3}

%%%%%%%%
\section{Lipschitz distributions} \label{section:coherence}

In this section, we show that center bunching and Lipschitz distributions imply dynamical coherence. R. Saghin has proved the same result (unpublished) for strong partially hyperbolic diffeomorphisms, using different techniques. Theorem~\ref{theorem:coherence} was also established in \cite{BuWi1} by Burns and Wilkinson under stronger assumptions---they needed the distributions to be $C^2.$ The proof we present is similar to the one in \cite{HHU} for smooth distributions.

The center bunching condition given below is not the (mixed) center bunching condition that appears in \cite{BuWi1}, but this condition is related to the \textit{symmetric} center bunching condition used in \cite{BuWi2}.

\begin{definition}
A partially hyperbolic diffeomorphism $f:M \to M$ is \textit{center bunched} if for every $p \in M$, $\hat{\nu}(p) < \hat{\gamma}(p)^{2}$ and $\nu(p) < \gamma(p)^2.$
\end{definition}

There is an example in \cite{Smale}, attributed to A.~Borel by Smale, of a partially hyperbolic diffeomorphism on a six dimensional manifold that has smooth distributions, is not dynamically coherent, and does not satisfy this center bunching condition ($\hat{\nu} = \hat{\gamma}^{2}$ and $\nu = \gamma^2$ in this example). Borel's example is well known and it also appears in \cite{KH} and \cite{BuWi2}.

%\subsection{Frobenius' theorem}
The proof of Theorem~\ref{theorem:coherence} relies on a version of Frobenius' theorem for Lipschitz distributions, which we now state. A very famous theorem of Radamacher asserts that Lipschitz functions are differentiable almost everywhere. This allows the Lie bracket and the exterior derivative to be defined almost everywhere, and then the Lipschitz Frobenius' Theorem follows via careful approximations of Lipschitz distributions by smooth ones. This program is carried out by S.~Simi\'{c} in \cite{Simic}.

\begin{theorem}[Simi\'{c}]
Let $E$ be a $k$-dimensional Lipschitz distribution on a compact smooth $n$-dimensional manifold $M.$ If $E$ is involutive almost everywhere, then every point of $M$ has a coordinate neighborhood $(U ; x_1, . . . , x_n)$ such that: 
\begin{enumerate}[\normalfont 1)]
\item Each map $x_i : U \to R$ is Lipschitz.
\item The slices $x_{k+1}=$ constant, . . . , $x_n =$ constant are integral manifolds of $E.$ Moreover, every connected integral manifold of $E$ in $U$ is of class $C^{1,Lip}$ and lies in one of these slices.
\end{enumerate}
Furthermore, through every point $p \in M$ passes a unique maximal connected integral manifold of $E$, and every connected integral manifold of $E$ through $p$ is contained in the maximal one. 
\end{theorem}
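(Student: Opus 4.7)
The plan is to produce integral manifolds via commuting flows after reducing the distribution to a convenient local normal form, using mollification to promote almost-everywhere involutivity to genuine commutativity of the flows. First, I would work near an arbitrary point $p\in M$. A smooth linear change of coordinates arranges $E(p) = \mathrm{span}(\partial/\partial y_1,\ldots,\partial/\partial y_k)$ in coordinates $(y,z)\in\mathbb{R}^k\times\mathbb{R}^{n-k}$. Shrinking the neighborhood so that $E$ remains transverse to the $z$-planes, one can choose Lipschitz vector fields spanning $E$ in triangular form
\[
X_i \;=\; \tfrac{\partial}{\partial y_i} + \sum_{j=1}^{n-k} a_{ij}(y,z)\,\tfrac{\partial}{\partial z_j}, \qquad i=1,\ldots,k,
\]
with Lipschitz coefficients $a_{ij}$. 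By Rademacher's theorem the $a_{ij}$ are differentiable a.e., so each $[X_i,X_j]$ is defined a.e.; moreover its horizontal component vanishes identically (the parts $\partial/\partial y_i$ are constant), and combined with $[X_i,X_j]\in E$ a.e.\ and the linear independence of the $\partial/\partial y_i$, this forces $[X_i,X_j]=0$ a.e.

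The next step is to show that the unique local flows $\phi^i_t$ of the $X_i$ (existence and uniqueness from Cauchy--Lipschitz) commute. I would mollify each $X_i$ to a smooth vector field $X_i^{\varepsilon}$, with uniformly bounded Lipschitz constants and $X_i^{\varepsilon}\to X_i$ uniformly on compact sets. Because the $a_{ij}$ are Lipschitz and the mollifier nearly commutes with differentiation, the identity $[X_i,X_j]=0$ a.e.\ transfers to locally uniform convergence $[X_i^{\varepsilon},X_j^{\varepsilon}]\to 0$. The smooth Baker--Campbell--Hausdorff-type expansion
\[
\phi^{j,\varepsilon}_{-s}\circ\phi^{i,\varepsilon}_{-t}\circ\phi^{j,\varepsilon}_{s}\circ\phi^{i,\varepsilon}_{t}(q) \;=\; q + st\,[X_i^{\varepsilon},X_j^{\varepsilon}](q) + O\!\left((|s|+|t|)^3\right),
\]
together with Gronwall estimates that propagate $\phi^{i,\varepsilon}_t\to\phi^i_t$ uniformly through the fourfold composition, then yields $\phi^i_t\circ\phi^j_s = \phi^j_s\circ\phi^i_t$ in the limit.

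With commuting Lipschitz flows in hand, I would define
\[
\Phi(t_1,\ldots,t_k,z) \;=\; \phi^1_{t_1}\circ\phi^2_{t_2}\circ\cdots\circ\phi^k_{t_k}(0,z).
\]
Because the horizontal component of each $X_i$ is exactly $\partial/\partial y_i$, the $y$-coordinate of $\Phi(t,z)$ equals $t$; thus $\Phi$ is an explicit bi-Lipschitz homeomorphism onto a neighborhood of $p$, and its inverse provides the coordinates $(x_1,\ldots,x_n)$ of condition (1). By construction the slices $x_{k+1},\ldots,x_n=\text{const}$ are images of the Lipschitz flow map $(t_1,\ldots,t_k)\mapsto \phi^1_{t_1}\circ\cdots\circ\phi^k_{t_k}(0,z_0)$; this map is $C^1$ in each $t_i$ with $\partial/\partial t_i$-derivative equal to the pushforward of the Lipschitz field $X_i$, which yields $C^{1,\mathrm{Lip}}$ regularity of the slices and shows that they are tangent to $E$ at every point. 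Uniqueness of the local integral manifold through any point follows from Cauchy--Lipschitz uniqueness for the flow ODEs defining $X_1,\ldots,X_k$, and the maximal integral manifold through $p$ is produced by the standard saturation of overlapping local leaves along paths.

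The main obstacle is the implication ``almost-everywhere vanishing Lie bracket for Lipschitz vector fields $\Longrightarrow$ commuting flows.'' Since the bracket involves derivatives of Lipschitz coefficients that exist only a.e., one must verify carefully that $[X_i^{\varepsilon},X_j^{\varepsilon}]$ converges strongly enough (locally uniformly) to pass to the limit in the BCH expansion, and that the Gronwall errors arising from replacing $X_i^{\varepsilon}$ by $X_i$ do not swamp the ``$st[X_i^{\varepsilon},X_j^{\varepsilon}]$'' main term as $\varepsilon\to 0$. This analytic core is where the bulk of Simi\'c's paper is concentrated; once it is established, the remaining geometric steps -- the triangular reduction, the flow chart $\Phi$, and the saturation argument -- are comparatively routine adaptations of the smooth proof.
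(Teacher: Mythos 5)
The paper does not actually prove this theorem: it is quoted from Simi\'{c}'s article \cite{Simic}, and the only indication of method the paper gives is the one-sentence program ``Rademacher yields a.e.\ differentiability of the coefficients, and the Lipschitz Frobenius theorem then follows via careful approximations of Lipschitz distributions by smooth ones.'' Your outline (triangular normal form, mollification, commuting flows, flow-box chart) is consistent with that program, so the real comparison has to be with the analytic core of Simi\'{c}'s argument rather than with anything in this paper.

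Measured against that, there is one concrete false step. You assert that $[X_i,X_j]=0$ a.e.\ ``transfers to locally uniform convergence $[X_i^{\varepsilon},X_j^{\varepsilon}]\to 0$.'' It does not. The bracket of the mollified fields involves terms of the form $a^{\varepsilon}\,\partial_z b^{\varepsilon}=a^{\varepsilon}\,(\partial_z b)^{\varepsilon}$ with $a,b$ Lipschitz; mollified derivatives of a Lipschitz function converge to the a.e.\ derivative pointwise a.e.\ and in $L^p_{\mathrm{loc}}$ for finite $p$, but they converge \emph{uniformly} only when that derivative is continuous, which is exactly what cannot be assumed here (for $a(y)=|y|$ in one variable, $(a')^{\varepsilon}$ takes every value in $[-1,1]$ near the origin for every $\varepsilon>0$). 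Consequently the BCH expansion plus ``pass to the limit'' does not close as written: the main term $st\,[X_i^{\varepsilon},X_j^{\varepsilon}](q)$ need not tend to $0$ at the particular point $q$ at which you evaluate it. The standard repairs are either (i) to work in integrated form --- since flows of Lipschitz fields are bi-Lipschitz and hence preserve Lebesgue-null sets, Fubini shows that for a.e.\ base point the relevant flow lines meet the full-measure involutivity set for a.e.\ time, and one integrates the a.e.\ identity along flow lines rather than evaluating it pointwise --- or (ii) to invoke a Friedrichs/DiPerna--Lions type commutator estimate in $L^1_{\mathrm{loc}}$ combined with a Gronwall argument in an averaged sense. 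This is precisely the analytic content of Simi\'{c}'s paper; you correctly identify it as the crux, but the specific mechanism you propose for crossing it is not valid as stated. The remaining geometric steps (the flow chart $\Phi$, the $C^{1,\mathrm{Lip}}$ regularity of the slices, and the saturation producing the maximal leaf) are fine.
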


%\subsection{Center bunching and dynamical coherence}

We will now introduce notation that will simplify the proof of Theorem~\ref{theorem:coherence}.
If $p \in M$ and $n$ is an integer, then $p_n$ denotes the point $f^n(p).$ Also, for a function $\beta :M \to \mathbb{R}$ and an integer $n>0$, let $\beta_n(p) = \beta(p) \beta(p_1) \dots \beta(p_{n-1}).$

\begin{theorem:coherence}
Let $f$ be a partially hyperbolic diffeomorphism. If $f$ is center bunched and the distributions $E^{cs}$ and $E^{cu}$ are Lipschitz, then $f$ is dynamically coherent.
\end{theorem:coherence}
\begin{proof}
Assume that $\alpha$ is a Lipschitz 1-form such that $\alpha = 0$ for every vector in $E^{cs}$ and let $V$ and $W$ be any two Lipschitz vector fields in $E^{cs}.$  Since the distribution $E^{cs}$ and the vector fields are Lipschitz, there exists a set of full measure for which the exterior derivative of $\alpha$ and the Lie bracket of $V$ and $W$ are defined. For every point in this full measure set,
\begin{align*}
d\alpha (V,W) & = V(\alpha (W)) - W(\alpha (V)) - \alpha ([V,W]) \\
& =  - \alpha([V,W]).
\end{align*}
We will use this observation to prove that $E^{cs}$ is involutive almost everywhere, which is sufficient to prove that $E^{cs}$ is integrable. A similar argument will establish that $E^{cu}$ is integrable.

Now let $W^{cs}$ and $V^{cs}$ be two Lipschitz vector fields in $E^{cs}$, and then, let $p$ be a point at which $E^{cs}$ is differentiable and $[V^{cs}, W^{cs}]_p$ is defined. If $[V^{cs},W^{cs}]_p \in E^{cs}(p)$, there is nothing to prove. Assume that the projection of $[V^{cs},W^{cs}]_p $ onto $E^u(p)$ is nontrivial, and let $v^u \in E^u(p)$ be this nontrivial projection.  We will first define a 1-form $\alpha$ in a sufficiently small neighborhood $N$ of a limit point $l$ in $\omega(p).$ There exists a sequence $\{n_k \}$ of positive integers such that $f^{n_k}(p) \to l$ and $f^{n_k}(p)$ are all in $N.$ There exists a subsequence $\{n_{k_m}\}$ of $\{n_k \}$ such that vectors $Df^{n_{k_m}}(v^u)/ \p Df^{n_{k_m}}(v^u) \p$ converge to a vector $v$ at $l.$ Define a vector field $X$ in $N$ by choosing a smooth extension of the vector $v$ at $l.$ Now let $\alpha = 1$ on $X$ and zero on the orthogonal complement of $X$ in $N$. This locally defined form can be extend to entire manifold so that $\alpha$ is zero outside a neighborhood that contains $N.$ Note that since $f$ is a $C^1$ diffeomorphism, the distributions are differentiable along the orbit of $p$ in $N$, and therefore, $d\alpha$ is defined along the orbit of $p$ in $N.$ Furthermore, since $\alpha$ is Lipschitz, there exists a constant $K$ such that where $d\alpha$ is defined, $|d\alpha (x,y)| < K$ for any unit vectors $x$ and $y.$

Now there exists a $n_{k_r}$ so that $\alpha(Df^{n_{k_m}}(v^u)) > 0$ for all $n_{k_m} > n_{k_r}.$ Let $U=Df^{n_{k_r}}(v^u)$, let $V= f^{n_{k_r}}_*(V^{cs})$, let $W=f^{n_{k_r}}_*(W^{cs})$, and let $p' = f^{n_{k_r}}(p).$ To simplify notation, we will supress all these subscripts and assume that  $\{n_{k_m}\}_{m=r}^{\infty}$ is the increasing sequence of positive integers.
The form $\alpha$ has been chosen to guarantee that  for sufficiently large $n$, $|f^{n*} \alpha(U)| \geq k_1 \hat{\nu}^{-1}_{n}(p') |\alpha(U)|$, where $0 < k_1 \leq 1.$ 
Also, for $n>0$ we have
\begin{align*}
|f^{n*} \alpha(U)| & = |f^{n*} \alpha([V,W]_{p'})| = |f^{n*} d \alpha(W_{p'},V_{p'})|\\
& = | d\alpha (f^n_* W_{p'}, f^n_* V_{p'}) | \leq k_2 \hat{\gamma}_{n}^{-1}(p')^2,
\end{align*}
where $k_2 > 0$ is a constant. Therefore, for all $n$ sufficiently large,
\begin{equation*}
k_1 \hat{\nu}^{-1}_{n}(p') |\alpha(U)| \leq |f^{n*} \alpha(U)| \leq k_2 \hat{\gamma}_{n}^{-1}(p')^2.
\end{equation*}
This is impossible since $\hat{\nu}(q) < \hat{\gamma}(q)^{2}$ for all $q \in M.$
Therefore, $[V^{cs},W^{cs}]_{p} \in E^{cs}(p).$ This argument shows that $E^{cs}$ is involutive almost everywhere.

Note that we do not need the center bunching condition when $E^c$ is one-dimensional. To see this, let $W^{cs}$ be a Lipschitz vector filed in $E^{cs}$ and let $V^{s}$ be a Lipschitz vector field in $E^s.$ If $[V^s, W^{cs}]_{p}$ is not in $E^{cs}(p)$, construct the 1-form $\alpha$ as before. Using notation similar to the notation above, for all $n$ sufficiently large we have
\begin{align*}
k_1 \hat{\nu}^{-1}_{n}(p') |\alpha(U)| & \leq |f^{n*} \alpha(U)| = |f^{n*} d \alpha(W_{p'},V_{p'})|\\
& = | d\alpha (f^n_* W_{p'}, f^n_* V_{p'}) | \leq k_2  \hat{\gamma}_{n}^{-1}(p') \nu_{n}(p'),
\end{align*}
for some constant $k_2 > 0.$ This is impossible since $\hat{\gamma}^{-1}(q) < \hat{\nu}^{-1}(q)$ and $\nu(q) < 1$ for all $q \in M.$ So $[V^s, W^{cs}]_p \in E^{cs}(p).$

These arguments imply that $E^{cs}$ is integrable. A similar argument, which uses the fact that $\nu(q) < \gamma(q)^2$ for all $q \in M$, establishes that $E^{cu}$ is also integrable. Again, if $E^c$ is one-dimensional, the center bunching condition $\nu(q) < \gamma(q)^2$ for all $q \in M$ is not required to show that $E^{cu}$ is integrable.
\end{proof}

In the proof above, the center bunching conditions  $\hat{\nu} < \hat{\gamma}^{2}$ and $\nu < \gamma^2$ were only required when $\textrm{dim}(E^c) > 1.$

\begin{corollary} \label{corollary:one_dim_center}
Let $f$ be a partially hyperbolic diffeomorphism with a one-dimensional center bundle. If the distributions $E^{cs}$ and $E^{cu}$ are Lipschitz, then $f$ is dynamically coherent. 
\end{corollary}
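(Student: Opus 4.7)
The plan is to mirror the proof of Theorem~\ref{theorem:coherence} and show that when $\dim E^{c}=1$ the center bunching estimate can be replaced by a pointwise antisymmetry observation. By Simi\'{c}'s Lipschitz Frobenius theorem it suffices, exactly as in Theorem~\ref{theorem:coherence}, to verify that $E^{cs}$ and $E^{cu}$ are involutive almost everywhere; I will treat $E^{cs}$ in detail and note that $E^{cu}$ follows by the symmetric argument with $f$ replaced by $f^{-1}$ (which interchanges $E^{s}$ and $E^{u}$).

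Fix Lipschitz sections $V^{cs}, W^{cs}$ of $E^{cs}$ and a point $p$ at which all relevant Lie brackets and exterior derivatives are defined. Arguing by contradiction, suppose $[V^{cs}, W^{cs}]_{p}$ has nontrivial $E^{u}$-projection, and construct the Lipschitz 1-form $\alpha$, the point $p' = f^{n_{k_{r}}}(p)$, the vector $U \in E^{u}(p')$, and the sequence $\{n_{k_{m}}\}$ exactly as in Theorem~\ref{theorem:coherence}. For all sufficiently large $n$ this yields the lower bound
\begin{equation*}
|f^{n*}\alpha(U)| = |d\alpha(f^{n}_{*}W_{p'}, f^{n}_{*}V_{p'})| \geq k_{1}\hat{\nu}^{-1}_{n}(p')|\alpha(U)|.
\end{equation*}

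The essential new step is a sharper upper bound on $|d\alpha(f^{n}_{*}W_{p'}, f^{n}_{*}V_{p'})|$ that bypasses center bunching. At the point $q = f^{n}(p')$ decompose each pushed-forward vector pointwise under the $Df$-invariant splitting $E^{cs}(q) = E^{s}(q)\oplus E^{c}(q)$. Expanding $d\alpha_{q}$ bilinearly produces four terms, and the key observation is that the $E^{c}$--$E^{c}$ term vanishes: since $\dim E^{c}(q) = 1$ the two $E^{c}$-components are parallel, and the antisymmetric form $d\alpha_{q}$ annihilates parallel pairs. Each of the three surviving terms contains at least one $E^{s}$-component, whose norm is controlled by $\nu_{n}(p')$ via the $Df$-invariance of $E^{s}$ and the uniformly bounded angle between $E^{s}$ and $E^{c}$ on the compact manifold $M$, while the $E^{c}$-components are bounded by $\hat{\gamma}^{-1}_{n}(p')$. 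Combining these with the Lipschitz bound on $d\alpha$ gives
\begin{equation*}
|d\alpha(f^{n}_{*}W_{p'}, f^{n}_{*}V_{p'})| \leq k_{2}\nu_{n}(p')\hat{\gamma}^{-1}_{n}(p'),
\end{equation*}
which is exactly the bound that Theorem~\ref{theorem:coherence} obtains in the $[V^{s}, W^{cs}]$ case, where no center bunching is required.

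Chaining the two bounds forces $k_{1}\hat{\nu}^{-1}_{n}(p')|\alpha(U)| \leq k_{2}\nu_{n}(p')\hat{\gamma}^{-1}_{n}(p')$ for all large $n$, which is impossible: the partial hyperbolicity inequalities $\hat{\gamma}^{-1} < \hat{\nu}^{-1}$ and $\nu < 1$ force $\hat{\nu}^{-1}$ to dominate $\nu\hat{\gamma}^{-1}$ exponentially. Hence $E^{cs}$ is involutive almost everywhere, $E^{cu}$ follows by the symmetric argument, and Simi\'{c}'s theorem supplies the integral foliations $\mathcal{F}^{cs}$ and $\mathcal{F}^{cu}$ witnessing dynamical coherence. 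The main technical subtlety I expect is justifying the norm bound $\|(f^{n}_{*}V_{p'})^{s}\| \leq C\nu_{n}(p')$: the pointwise decomposition $V^{cs}_{p'} = V^{s}_{p'} + V^{c}_{p'}$ need not extend to a Lipschitz decomposition of the global vector field, but the $Df$-invariance of the splitting gives $(f^{n}_{*}V_{p'})^{s} = f^{n}_{*}(V^{s}_{p'})$, so only the single-point norm $\|V^{s}_{p'}\|$ enters the estimate, and this is uniformly bounded by compactness of $M$.
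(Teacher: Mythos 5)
Your proof is correct, and it follows a genuinely different route from the paper's at the one step that matters. The paper's argument for the $\dim E^c = 1$ case takes $V^s$ to be a global Lipschitz \emph{section of $E^s$} and $W^{cs}$ a Lipschitz section of $E^{cs}$, and gets the bound $\nu_n\hat\gamma^{-1}_n$ immediately because $V^s$ lives entirely in $E^s$; it then leaves implicit that checking brackets of $E^s$--sections with one another and with a single $E^c$--section exhausts the involutivity condition. That presentation quietly uses that $E^s$ (and $E^c$) admit local Lipschitz frames, which is not obviously a consequence of the stated hypothesis that only $E^{cs}$ and $E^{cu}$ are Lipschitz. Your version avoids this entirely: you keep $V^{cs}, W^{cs}$ arbitrary Lipschitz sections of $E^{cs}$, decompose $f^n_*V_{p'}$ and $f^n_*W_{p'}$ \emph{pointwise} at $q=f^n(p')$ under the $Df$-invariant splitting, use the antisymmetry of $d\alpha_q$ together with $\dim E^c(q)=1$ to annihilate the $E^c$--$E^c$ cross term, and pull the $E^s$-factor back through $Df$-invariance to get $\nu_n$. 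One small point worth recording explicitly: the surviving $E^s$--$E^s$ term is bounded by $\nu_n^2$, and you should note that $\nu_n^2\le\nu_n\hat\gamma_n^{-1}$ (from $\nu<\gamma<\hat\gamma^{-1}$) so that all three surviving terms are dominated by $\nu_n\hat\gamma_n^{-1}$. With that remark in place, the estimate and the resulting contradiction are exactly the paper's, and your passage to $E^{cu}$ via $f\mapsto f^{-1}$ is a clean alternative to the paper's ``similar argument.'' Net effect: same conclusion, but your pointwise-decomposition-plus-antisymmetry mechanism handles arbitrary pairs of $E^{cs}$--sections in one stroke, dispenses with any regularity assumption on $E^s$ or $E^c$ individually, and makes the role of $\dim E^c=1$ structurally transparent (it is precisely what kills the would-be $(\hat\gamma^{-1}_n)^2$ term).
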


In particular, if $f$ is a partially hyperbolic diffeomorphism on a $3$-manifold and if the distributions $E^{cs}$ and $E^{cu}$ are Lipschitz, then $f$ is dynamically coherent. Also note that Corollary~\ref{corollary:one_dim_center} with the hypothesis that $E^c$ is Lipschitz is a direct consequence of Proposition 2.7  in \cite{BuWi2}.

\section{Questions} \label{section:questions}

We end by presenting several open questions. Some of these appear above in this paper; we collect all questions in this section for the benefit of the reader.

\begin{question}
Which $3$-manifolds support partially hyperbolic diffeomorphisms?
\end{question}

\begin{question}
Does there exist a $3$-manifold with exponential growth in its fundamental group that supports a partially hyperbolic diffeomorphism but does not support an Anosov flow?
\end{question}

Very recently, F. Rodriguez Hertz, M.A. Rodriguez Hertz, and R. Ures announced the existence of a non-dynamically coherent partially hyperbolic diffeomorphism of ${T}^3$ (homotopic to a skew product). This example cannot be of a strong partially hyperbolic diffeomorphism.
\begin{question}
Is every strong partially hyperbolic diffeomorphism on a $3$-manifold dynamically coherent?
\end{question}

\begin{question}[Burns and Wilkinson \cite{BuWi2}]
Let $M$ be a $n$-dimensional manifold that supports a partially hyperbolic diffeomorphism $f.$ If $f$ is (symmetric) center bunched, is $f$ necessarily dynamically coherent?
\end{question}

In other words, prove Theorem~\ref{theorem:coherence} without assuming that the distributions $E^{cs}$ and $E^{cu}$ are Lipschitz.

\begin{question} \label{question:universal}
Let $M$ be an $n$-dimensional manifold that supports a partially hyperbolic diffeomorphism. If the distributions $E^c$ and $E^u$ are both one-dimensional, then is the universal cover of $M$ homeomorphic to $\mathbb{R}^n$?
\end{question}

The condition $\textrm{dim}(E^c) = 1$ is necessary since we may easily construct a partially hyperbolic diffeomorphism on $T^2 \times S^2$, by taking a product of an Anosov diffeomorphism with the identity, so that $\textrm{dim}(E^c) = 2$ and the universal cover is $\mathbb{R}^2 \times S^2.$
When $f$ is dynamically coherent, $\textrm{dim}(E^u) = 1$ implies that $\F^{cs}$ is a codimension one foliation. For instance, a time-one map of a codimension one Anosov flow ($\textrm{dim}(E^u)=1$) is dynamically coherent and has one-dimensional center and unstable distributions. It is well known that the universal cover of a manifold that supports a codimension one Anosov flow is homeomorphic to $\mathbb{R}^n$, and Question~\ref{question:universal} asks whether the same is true for a manifold that supports a \textit{codimension one partially hyperbolic diffeomorphism} ($\textrm{dim}(E^c) = 1$ and $\textrm{dim}(E^u)=1$). It is not too difficult to prove that this is indeed the case under the additional hypotheses of dynamical coherence and no compressible leaves in $\F^{cs}.$ 

\begin{question}
Let $M$ be a $n$-dimensional manifold that supports a codimension one partially hyperbolic diffeomorphism $f.$ If $f$ is dynamically coherent, then is every closed leaf in $\F^{cs}$ incompressible?
\end{question}


\begin{thebibliography}{77}

\bibitem{BoWi} C.~Bonatti and A.~Wilkinson, \textit{Transitive partially hyperbolic diffeomorphisms on $3$-manifolds}, Topology 44 (2005), no. 3, 475--508.

\bibitem{Brin} M.~Brin, \textit{On dynamical coherence}, Ergodic Theory Dynam. Systems 23 (2003), no. 2, 395--401.

\bibitem{BBI2} M. Brin, D. Burago, and S. Ivanov, \textit{Dynamical coherence of partially hyperbolic diffeomorphisms of the $3$-torus}, J. Mod. Dyn. 3 (2009), no. 1, 1--11.

\bibitem{BI} D. Burago, and S. Ivanov, \textit{Partially hyperbolic diffeomorphisms of $3$-manifolds with abelain fundamental groups}, J. Mod. Dyn. 2 (2008), no. 4, 541--580.

\bibitem{BuWi1} K.~Burns and A.~Wilkinson, \textit{On the ergodicity of partially hyperbolic systems}, to appear in The Annals of Mathematics.

\bibitem{BuWi2} K.~Burns and A.~Wilkinson, \textit{Dynamical coherence and center bunching}, Discrete Contin. Dyn. Syst. 22 (2008), no. 1-2, 89--100.

\bibitem{Calegari&Dunfield} D.~Calegari and N.~Dunfield, \textit{Laminations and groups of homeomorphisms of the circle}, Invent. Math. 152 (2003), no. 1, 149--204.

\bibitem{Fol1} A.~Candel and L.~Conlon, \textit{Foliations I}, Graduate Studies in Mathematics 23, American Mathematical Society, Providence, RI, 2000.

\bibitem{Fol2} A.~Candel and L.~Conlon, \textit{Foliations II}, Graduate Studies in Mathematics 60, American Mathematical Society, Providence, RI, 2003.

\bibitem{Evans&Moser} B.~Evans and L.~Moser, \textit{Solvable fundamental groups of compact $3$-manifolds}, Trans. Amer. Math. Soc. 168 (1972), 189--210.

\bibitem{Fenley_laminar} S.~Fenley, \textit{Laminar free hyperbolic $3$-manifolds}, Comment. Math. Helv. 82 (2007), 247--321.

\bibitem{Fenley} S.~Fenley, \textit{Foliations, topology and geometry of $3$-manifolds: $\mathbb{R}$-covered foliations and transverse pseudo-Anosov flows}, Comment. Math. Helv. 77 (2002), no. 3, 415--490.

\bibitem{Goodman} S.~Goodman, \textit{Dehn surgery on Anosov flows}, Geometric dynamics (Rio de Janeiro, 1981), 300--307, Lecture Notes in Math., 1007, Springer, Berlin, 1983. 

\bibitem{Gromov} M.~Gromov, \textit{Groups of polynomial growth and expanding maps}, Inst. Hautes tudes Sci. Publ. Math. 53 (1981), 53--78.

\bibitem{Handel&Thurston} M.~Handel and W.~Thurston, \textit{Anosov flows on new three manifolds}, Invent. Math. 59 (1980), no. 2, 95--103.

\bibitem{HHU} F.~R.~Hertz, M.~A.~Hertz, and R.~Ures, \textit{A survey of partially hyperbolic dynamics}, Fields Institute Comm. 51, 35--87.

\bibitem{Jaco&Shalen} W.~Jaco and P.~Shalen, \textit{Seifert fibered spaces in $3$-manifolds}, Mem. Amer. Math. Soc. 21 (220), 1979.

\bibitem{Johannson} K.~Johannson, \textit{Homotopy Equivalences of $3$-manifolds With Boundary}, Springer Lecture Notes 761, 1979.

\bibitem{KH} A.~Katok and B.~Hasselblatt, \textit{Introduction to the modern theory of dynamical systems}, Encyclopedia of Mathematics and its Applications 54, Cambridge University Press, Cambridge, 1995.

\bibitem{McCullough} D.~McCullough, \textit{Virtually geometrically finite mapping class groups of $3$-manifolds}, J. Differential Geom. 33 (1991), no. 1, 1--65.

\bibitem{Morgan-Tian} J.~Morgan and G.~Tian, \textit{Completion of the Proof of the Geometrization Conjecture}, preprint 2009, arXiv:0809.4040.

\bibitem{Novikov} S.~P.~Novikov, \textit{The topology of foliations}, Trudy Moskov. Mat. Ob\v s\v c. 14 (1965) 248--278.

\bibitem{Palmeira} F.~Palmeira, \textit{Open manifolds foliated by planes}, Ann. Math. (2) 107 (1978), no. 1, 109--131.

\bibitem{Perelman1} G.~Perelman, \textit{The entropy formula for the Ricci flow and its geometric applications}, preprint 2002, 
math.DG/0211159.

\bibitem{Perelman2} G.~Perelman, \textit{Ricci flow with surgery on three-manifolds}, preprint, 2003, math.DG/0303109. 

\bibitem{Perelman3} G.~Perelman, \textit{Finite extinction time for the solutions to the Ricci flow on certain three-manifolds}, preprint, 2003, math.DG/0307245.

\bibitem{RSS} R.~Roberts, J.~Shareshian, and M.~Stein, \textit{Infinitely many hyperbolic $3$-manifolds which contain no Reebless foliation}, J. Amer. Math. Soc. 16 (2003), no. 3, 639--679.

\bibitem{coarse} J.~Roe, \textit{Lectures on coarse geometry}.  University Lecture Series, 31. American Mathematical Society, Providence, RI, 2003.

\bibitem{Scott} P.~Scott, \textit{The geometries of $3$-manifolds}, Bull. London Math. Soc. 15 (1983), 401--487.

\bibitem{Siebenmann} L.~C.~Siebenmann, \textit{Deformation of homeomorphisms on stratified sets}, Comment. Math. Helv. 47 (1972), 123--163.

\bibitem{Simic} S.~Simi\'{c}, \textit{Lipschitz distributions and Anosov flows}, Proc. Amer. Math. Soc. 124 (1996), no. 6, 1869--1877.

\bibitem{Smale} S.~Smale, \textit{Differentiable dynamical systems}, Bull. Amer. Math. Soc. 73 (1967), 747--817.

\bibitem{Solodov} V.~V.~Solodov, \textit{Components of topological foliations}, Mat. Sb. (N.S.) 119 (161) (1982), no. 3, 340--354, 447.

\bibitem{Waldhausen} F.~Waldhausen, \textit{On irreducible $3$-manifolds which are sufficiently large}, Ann. Math. 87 (1968) 56--88.

\bibitem{Waldhausen2} F.~Waldhausen, \textit{Eine Klasse von 3-dimensionalen Mannigfaltigkeiten II}, Invent. Math., 4 (1976), 87--117. 

\bibitem{Weinberger} S.~Weinberger, \textit{Computers, Rigidity, and Moduli: The Large-Scale Fractal Geometry of Riemannian Moduli Space}.  M.~B.~Porter Lectures. Princeton University Press, Princeton, NJ, 2005.

\end{thebibliography}
\end{document}